\documentclass[11pt]{article}

\usepackage{graphicx,,psfrag}

\usepackage{amssymb,amsmath,amsthm,enumerate}
\usepackage{fullpage}
\usepackage{bbm}
\newcommand{\url}{}

\usepackage{tikz}

\RequirePackage[colorlinks,citecolor=blue,urlcolor=blue]{hyperref}

\newtheorem{lemma}{Lemma}
\newtheorem{remark}{Remark}
\newtheorem{proposition}{Proposition}
\newtheorem{definition}{Definition}
\newtheorem{theorem}{Theorem}

\newtheorem{corollary}{Corollary}

\newcommand{\cA}{\mathcal{A}}
\newcommand{\cE}{\mathcal{E}}

\newcommand{\dE}{\mathbb {E}}

\newcommand{\dN}{\mathbb {N}}

\newcommand{\dR}{\mathbb {R}}
\newcommand{\dC}{\mathbb {C}}

\newcommand{\cB}{\mathcal{B}}
\newcommand{\cG}{\mathcal {G}}

\newcommand{\cP}{\mathcal {P}}

\newcommand{\cM}{\mathcal {M}}

 %
 %
 % |1|

 % {1} %
 % Hilbert-Schmidt
 % (1)

\newcommand{\INT}[1]{{{\left[ \hspace{-1pt} \left[ #1 \right] \hspace{-1pt} \right] }}} % (1)

\newcommand{\AND}{\quad \mathrm{and} \quad}

\newcommand{\MM}{\mathfrak{m}}

\newcommand{\BB}{\mathfrak{b}}

\definecolor{darkred}{rgb}{0.9,0,0.3}

\begin{document}

\title{Tensorial free convolution, \\ semicircular, free Poisson and R-transform in high order}
\author{Rémi Bonnin
\thanks{Aix-Marseille University \& Ecole Normale Supérieure, France. Email: \href{remi.bonnin@ens.psl.eu}{remi.bonnin@ens.psl.eu}} }
%\thanks{Aix-Marseille Univ, CNRS, I2M, Marseille, France. Email: \href{remi.bonnin@ens.psl.eu}{remi.bonnin@ens.psl.eu}}

\maketitle

\begin{abstract}
This work builds on our previous developments regarding a notion of freeness for tensors. We aim to establish a tensorial free convolution for compactly supported measures. 
First, we define higher-order analogues of the semicircular (or Wigner) law and the free Poisson (or Marčenko-Pastur) law, giving their moments and free cumulants. We prove the convergence of a Wishart-type tensor to the free Poisson law and recall the convergence of a Wigner tensor to the semicircular law. We also present a free Central Limit Theorem in this context. 
Next, we introduce a tensorial free convolution, define an $R$-transform, and provide the first examples of free convolution of measures.
\end{abstract}

\section{Introduction}

The foundational and profoundly deep notion of freeness was introduced several decades ago by Dan-Virgil Voiculescu \cite{MR1094052, MR1217253}. This groundbreaking concept has given rise to a substantial body of work, notably through the contributions of Speicher, Nica, and Mingo, who have significantly advanced the development of the theory \cite{nica2018lectures, zbMATH06684673, mingonica2004}. Freeness has found numerous applications across diverse fields, including (quantum) group theory, quantum information, statistical inference and many others, see for instance \cite{nechitacollins2015, voiculescu2019hydrodynamicexercisefreeprobability,collins2024freecumulantsfreenessunitarily,Bandeira_2023}.

One of the most powerful tools provided by freeness is free convolution, which describes the distribution of the sum (or product) of two freely independent elements in terms of the individual distributions of each element \cite{VOICULESCU1986323, Speicher1994, BelinschiMaiSpeicher+2017+21+53}. Our primary goal in this paper is to extend the concept of free convolution to the setting of tensorial freeness. From there, we can explore its implications for specific distributions. In particular, two landmark distributions in free probability theory are the semicircular (or Wigner) law and the free Poisson (or Marčenko-Pastur) law. Freeness plays a crucial role in the study of random matrices, a key area within the ecosystem of noncommutative probability spaces \cite{zbMATH06684673, andersongz2010,aumale2021}. Semicircular and free Poisson laws arise notably as limiting distributions for large Wigner and Wishart matrices respectively, see \cite{eugenewigner, marcenkopastur1967, pastursccherbina2011, BaiSilv,bordenavechafai2011}.

In many respects, random tensors generalize random matrices to higher orders. This field is currently a vibrant area of research, driven by numerous applications in physics and computer science \cite{arous2018landscapespikedtensormodel,Jagannath_2020,Benedetti_2021,avohou2024countingunotimesrotimesonotimes,gurau2024quantumgravityrandomtensors}. In this paper, we aim to establish analogous results within the framework of tensorial freeness. Specifically, we proposed a notion of freeness for tensors in \cite{bonnin2024freenesstensors}, to which we refer for more detailed introduction. Here, we continue developing the foundations of tensorial free probability theory, with a particular focus on the concept of tensorial free convolution. The principal contributions of this work are the following :
\begin{itemize}
    \item[-] We define the moments and the free cumulants associated to a tensor.
    \item[-] We define the semicircular and the free Poisson laws of higher order, giving their moments and free cumulants.
    \item[-] We prove the convergence of the measure of a Wishart tensor towards the free Poisson law of high order and we recall the convergence of a Wigner's one towards the semicircular of high order and the free Central Limit Theorem.
    \item[-] We develop the tensorial free additive convolution for compactly supported measures.
\end{itemize}
We do not use the term "higher-order freeness," as this designation is already attributed to a different concept \cite{collinsmingospeicher2007}. In the rest of this introduction, we first recall briefly the background about trace invariants and non-crossing poset (formal definitions are postponed to Section \ref{sec:mom}) and then we will state our main results.
\paragraph{Maps of tensors.}
This work will treat with real tensors that is an element of the vector space $\cE^N_p = \dR^N \otimes \cdots \otimes \dR^N$, where $p \geq 1$ is called the order, the $p$ different copies of $\dR^N$ are called the legs and $N$ is called the dimension of the legs of the tensor. The symmetric group $S_p$ acts on the tensors by permutation of the legs, that is $T^{\sigma}$ is the order $p$ tensor with entries 
$T^{\sigma}_{i_1,\ldots,i_p} := T_{i_{\sigma(1)},\ldots,i_{\sigma(p)}},$
and a real tensor is said {\em symmetric} if $T=T^{\sigma}$ for all $\sigma \in S_p$. 
A tensor $T$ of order $p$ is represented by a map with a single vertex with $p$ boundary edges, see Figure \ref{fig:map1}. 
A contraction of two tensors $T_1$ (of order $p$) and $T_2$ (of order $q$) along $r$ legs of dimension $N$, defined as
$$ (T_1 \star_r T_2)_{i_1,\ldots,i_{p+q-2r}} := \sum_{1 \leq j_1,\ldots,j_r \leq N} (T_1)_{i_1,\ldots,i_{p-r},j_1,\ldots,j_r} (T_2)_{j_1,\ldots,j_r,i_{p-r+1},\ldots,i_{p+q-2r}}, $$
is then a map with two vertices obtained by forming edges between the contracted legs of the tensors. We can contract along the legs we want by acting with a $\sigma$ on $T$. We used here the notation of \cite{bandeira2024geometricperspectiveinjectivenorm}. More generally, let $\BB$ be a map with vertex set $V$, edge set $E$ and with $q\geq 1$ boundary edges say $\partial = (e_1,\ldots,e_q)$. Then if $(T_v)_{v \in V}$ is a collection of tensors where the order of $T_v$ is the degree of $v$ in $\BB$, we can define the tensor in $\cE^N_q$, for $i \in \INT{N}^{\partial}$, 
\begin{equation}\label{eq:trace1}
\BB ( (T_v)_{ v\in V} )_{i_{\partial}} =  \sum_{i \in \INT{N}^{E}} \prod_{v \in V} (T_v)_{i_{\partial v}},
\end{equation}
where $\partial v$ is the sequence of neighboring  edges and boundary edges of $v$. The linear combination of these maps of tensors of possibly different order encode all possible ways to contract the tensors and can be thought as the extension of the matrix polynomials in the matrix case, some examples are given in Section \ref{sec:mom}. 
If the map $\BB$ has no boundary edge we call it a trace map and for $(T_v)_{v \in V}$ a collection of tensors where the order of $T_v$ is the degree of $v$ in $\BB$, we define the trace invariant associated to this trace map as
\begin{equation}\label{eq:trace2}
\BB( (T_v)_{ v\in V} )  =  \frac{1}{N^\gamma} \sum_{i \in \INT{N}^{E}} \prod_{v \in V} (T_v)_{i_{\partial v}},
\end{equation}
where $\gamma$ is the number of connected components of $\BB$. 
The {\em distribution} of a collection of tensors $\cA  = \{ A_1,\ldots, A_n \}$ of possibly different order, $A_k \in \cE^N_{p_k}$ is the collection of all trace maps $\BB ( (T_v)_{v \in V})$ with $T_v \in \cA$ and $\BB$ with compatible degrees. In particular, if $T \in \cE^N_p$, the {\em distribution of $T$} is the collection of trace invariants for {\em $p$-regular} trace maps, that is all vertices of the map belong to exactly $p$ edges.
\par We introduce now informally three types of $p$-regular trace maps useful for the rest of the introduction. A {\em melon} is a map with two vertices and $p$ edges between them. If $p$ is even, a {\em bouquet} is a map with one vertex and $p/2$ self-loops, and a {\em multicycle} is a map with $n\geq 1$ vertices $p/2$ edges between the $i$-th and the $i+1$-th ones for all $1\leq i \leq n$ ($n+1=1$). For $p$ odd, a variant of an {\em odd multicycle} is presented in Section \ref{sec:maps}.

\paragraph{Non-crossing poset.} The notion of freeness detailed in \cite{bonnin2024freenesstensors} arises from a poset (partially ordered set) on maps, called by analogy the non-crossing poset. Loosely speaking, we say that a map $\BB'$ is smaller than a map $\BB$ if they differ by a switch, that is exchanging the ending point of two edges in $\BB$, and $\BB'$ has one more connected component. By transitivity, this gives a poset on the set of trace maps with a given number of vertices and a given sequence of degrees. The formal definitions are recalled in Section \ref{sec:mom} and an example is given in Figure \ref{fig:poset}. By Moebius inversion, it is possible to define free cumulants associated to this poset characterized by the implicit relation
\begin{equation}\label{eq:MCbasic}
    \BB((T_v)_v) = \sum_{\BB'\leq \BB} \kappa_{\BB'}((T_v)_v).
\end{equation}
There is a notion of freeness associated to this poset where importantly the distribution of a family of freely independent families is characterized by the individual distribution of the families. We also proved that the vanishing of mixed cumulants characterized freeness, but only in the case of even families for technical issues, as we will discuss later. We are now ready to go into the main results of this work.

\subsection{Main results}
Let $\cB^{(p)}_n$ be the set of connected rooted $p$-regular trace maps with $n$ (unlabeled) vertices. For a tensor $T$ of order $p$, and $n\geq 1$, we define $m_n(T)$, the $n$-th moment of $T$, as the sum of $\BB(T,\ldots,T)$ over all $\BB \in \cB^{(p)}_n$, where all vertices are decorated by $T$. Similarly, $\kappa_n(T)$ the $n$-th free cumulant of $T$ is the sum of $\kappa_{\BB}(T,\ldots,T)$ over all $\BB \in \cB^{(p)}_n$. We set $m_0(T)=\kappa_0(T)=1$. We then define the corresponding formal power series in $\dC\INT{z}$ :
$$ M_{T}(z) := \sum_{n\geq 0} m_n(T) z^n \quad \AND \quad C_{T}(z) := \sum_{n\geq 0} \kappa_n(T) z^n.$$
\begin{theorem}[Analytic moment-cumulant formula]\label{thm:MC}
    We have as functional relation in $\dC\INT{z}$,
    $$M_{T}(z) = C_{T}\left( z M_T(z)^{p/2} \right) .$$
\end{theorem}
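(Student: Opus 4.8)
The plan is to derive the functional equation combinatorially, by classifying the $p$-regular connected rooted trace maps that contribute to $m_n(T)$ according to the structure at the root. First I would recall that, by definition, $m_n(T)$ sums the trace invariants of all $p$-regular connected rooted maps on $n$ vertices with every vertex decorated by $T$, and that the moment-cumulant relation \eqref{eq:MCbasic} expresses each such trace invariant as a sum of free cumulants $\kappa_{\BB'}$ over $\BB' \le \BB$ in the non-crossing poset. Summing \eqref{eq:MCbasic} over all rooted maps and using the definition of $\kappa_n(T)$, one obtains that $M_T(z)$ can be written as a generating series over sequences of ``irreducible'' blocks, where each block is a $p$-regular connected trace map counted by some $\kappa_{n_j}(T)$ and the blocks are glued together in a non-crossing fashion compatible with the poset structure. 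This is the tensorial analogue of the classical statement that $M = 1 + \sum_{n\ge 1}\kappa_n \, z^n M^n$ in ordinary free probability, the difference being the exponent attached to $M$.

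The heart of the argument is to identify that exponent. I would look at the root vertex $v_0$ of a contributing map $\BB$: it has degree $p$, i.e. $p$ half-edges emanate from it. In the non-crossing (minimal-genus) structure these $p$ half-edges are organized into a non-crossing pattern, and removing the ``outermost'' block through which the root is connected leaves behind a collection of sub-maps hanging off the $p$ legs. The key combinatorial fact is that each of the $p$ legs of the root, after peeling off the block containing the root, independently carries a rooted sub-structure enumerated by $M_T(z)$ — but because legs are paired up in the melon/cycle-type local moves that generate the poset (edges come in the bundles of size $p/2$ described in the melon, bouquet, and multicycle definitions), the $p$ legs contribute $p/2$ independent factors of $M_T$ on one side and the sub-structures on the ``other side'' of each paired edge are already accounted for. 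Tracking this bookkeeping carefully gives that a block counted by $\kappa_{n}(T)$ at the root contributes $\kappa_n(T)\, z^{n} \big(M_T(z)^{p/2}\big)^{n}$ — equivalently, each of the $n$ vertices of the block carries a weight $z\, M_T(z)^{p/2}$. Summing over $n\ge 0$ yields exactly $M_T(z) = \sum_{n\ge 0}\kappa_n(T)\,\big(zM_T(z)^{p/2}\big)^n = C_T\!\big(zM_T(z)^{p/2}\big)$.

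To make the peeling rigorous I would argue by induction on the number of vertices, or equivalently set up a bijection between the set of $p$-regular connected rooted trace maps on $n$ vertices and the set of pairs consisting of (i) an irreducible block at the root (an element of the poset-theoretic ``block'' at the root, enumerated by the $\kappa_{n_0}$) together with (ii) an ordered assignment, to each of the non-root incidences created when the block is contracted, of a smaller $p$-regular connected rooted trace map — the ``$M_T(z)^{p/2}$ per vertex'' factor being exactly the count of how many such incidences each block-vertex produces. The compatibility of this decomposition with the Möbius/poset definition of $\kappa_{\BB'}$ is what upgrades the identity from moments to cumulants; this is where I would invoke the characterization \eqref{eq:MCbasic} and the multiplicativity of $\kappa$ over connected components established in \cite{bonnin2024freenesstensors}.

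The main obstacle, I expect, is precisely the exponent $p/2$: one must show that peeling the root block frees up exactly $p/2$ independent copies of $M_T$ per block-vertex (and not $p$, or $p-1$), and that for odd $p$ — where the ``odd multicycle'' structure with its alternating $(p+1)/2$ and $(p-1)/2$ edge bundles is the relevant local model — the two unequal bundle sizes average out over the two vertex-classes of each odd cycle to give the same effective weight $M_T^{p/2}$ per vertex. Handling the odd case uniformly with the even case, and checking that the formal power series manipulations (composition of $C_T$ with a series having zero constant term and linear coefficient $\kappa_1(T)$ — or rather with $zM_T(z)^{p/2}$, which has zero constant term) are legitimate in $\dC\INT{z}$, are the points that need care; everything else is a routine unpacking of the definitions from Section \ref{sec:mom}.
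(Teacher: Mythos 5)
Your approach --- decompose each contributing map $\BB' \le \BB$ by isolating the root block $\BB_0$, distribute the remaining connected components over the edges of $\BB_0$, and invoke multiplicativity of $\kappa$ over connected components --- is exactly the paper's proof (Lemma~\ref{lem:rel_mc} followed by a formal power-series rearrangement). You also correctly land on the final identity $M_T(z) = \sum_{s\ge 0}\kappa_s(T)\,\bigl(zM_T(z)^{p/2}\bigr)^s$.

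Where your explanation of the exponent goes astray, though, is the claim that $p/2$ comes from ``legs pairing up in melon/cycle-type local moves.'' The poset switches are arbitrary edge swaps, not confined to melon or multicycle patterns, and the multicycle structure is irrelevant here. The correct and simpler reason is purely a degree count: the root block $\BB_0$ is a $p$-regular connected map on $s$ vertices, hence has exactly $sp/2$ edges, and each edge of $\BB_0$ is a possible gluing site for one residual component $\BB'_j$ of $\BB'$ (this is what the switch relation of the poset provides). So one gets $sp/2$ independent factors of $M_T(z)$ per block, i.e.\ $p/2$ per block-vertex. For the same reason your worry about odd $p$ is misplaced: no ``averaging over the two vertex-classes of an odd multicycle'' is needed, because a $p$-regular map on $s$ vertices has $sp/2$ edges, which forces $s$ even whenever $p$ is odd --- so $sp/2$ is always an integer, the odd-indexed $m_n$ and $\kappa_n$ vanish, and the formula applies uniformly.
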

Note that this relation is always in $\dC\INT{z}$ and not in $\dC\INT{\sqrt{z}}$ since if $p$ is odd, then all the odd moments are zero as there is no $p$-regular trace map with $2n+1$ vertices, as it would need to have $p(2n+1)/2$ edges. Moreover, Gurau showed in \cite{gurau2020generalizationwignersemicirclelaw} that if $T$ is a real symmetric tensor then there exists a probability measure $\mu_T$ such that for all $n\geq 0$,
$$ \int \lambda^n d\mu_T(\lambda) = m_n(T).$$
A direct proof of the existence of this measure is still missing.
%nevertheless the following results of convergence may always be given in terms of convergence of the moment and vanishing of the variance instead of in terms of weak convergence of the measure. 
\par If $a$ is a distribution on the $p$-regular trace maps (that is $a : \cup_n \cB^{(p)}_n \mapsto \mathbb{R} $ satisfying multilinearity, morphism property and $a(\emptyset)=1$), we may speak of its moments $m_n(a)$ as the sum of $a(\BB)$ over all $\BB \in \cB^{(p)}_n$, and we denote $\mu_a$ as the measure on $\dR$ having $(m_n(a))_{n \geq 0}$ as moments, if it exists. By linearity we can equivalently define $a$ by giving $a(\kappa_{\BB})$.
\paragraph{High order laws.} For $p\geq 1$, we define the two following distributions $a_p$ and $b_{p,t}$ on the $p$-regular combinatorial maps by
$$a_p(\kappa_{\BB})=\frac{1}{(p-1)!} \mbox{   if } \BB \mbox{ is a melon and } 0 \mbox{ otherwise,}$$
%\begin{itemize}
%    \item[-] if $p$ is even,
    $$ b_{2p,t}(\kappa_{\BB})= \frac{t}{ (p!)^{n} } \mbox{ if } \BB \mbox{ is a multicycle of length } n \mbox{ and } 0 \mbox{ otherwise,} $$
%    \item[-] if $p$ is odd,
%        $$ b_{p,t}(\kappa_{\BB})= \frac{t}{ [(p+1/2)!(p-1/2)!]^{n} } \mbox{ if } \BB \mbox{ is an odd multicycle of length } 2n \mbox{ and } 0 \mbox{ otherwise.} $$
%\end{itemize}
Note that this is equivalent to $a_p(\BB)=\mathbf{1}_{\BB \text{ melonic}}$. Moreover, the case of $p$ odd for $b$ is treated in Section \ref{sec:wi} but not stated here for ease of notation.
%$$ b_{p,t}(\kappa_{\BB})= \frac{t}{ [(p/2)!]^{n} } \mbox{ if } \BB \mbox{ is  } n  \mbox{ vertices with } p/2 \mbox{ edges between the i-th and i+1-th ones } (n+1=1),$$
%$$ a_p(\kappa_{\BB})=b_{p,t}(\kappa_{\BB})=0 \mbox{ otherwise.}$$
\begin{theorem}[Semicircular law in high order]
    For $p \geq 1$, the measure $\mu_p:=\mu_{a_p}$ is a compactly supported probability measure on $\dR$ having for moments 
    $$ m_n(\mu_p) = \frac{1}{pn/2 +1}\binom{pn/2 +1}{n/2} \mbox{ if $n$ even} \quad \AND \quad m_n(\mu_p)=0 \mbox{ if $n$ odd},$$
    and for free cumulants
    $$ \kappa_n(\mu_p)=1 \mbox{ if $n=0$ or $2$} \quad \AND \quad \kappa_n(\mu_p)=0 \mbox{ otherwise}.$$
\end{theorem}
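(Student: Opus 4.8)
The plan is to read off the free cumulant series $C_{a_p}$ directly from the definition of $a_p$, feed it into the analytic moment--cumulant formula of Theorem~\ref{thm:MC} to obtain an algebraic equation for the moment series $M_{\mu_p}$, extract the moments by Lagrange inversion, and finally recognise the resulting sequence as the moments of a compactly supported probability measure.

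First I would compute the free cumulants. A distribution is determined by its values on $\kappa_{\BB}$ for $\BB$ ranging over $p$-regular \emph{connected} maps and is extended multiplicatively over connected components, so that $\kappa_n(\mu_p)=\kappa_n(a_p)=\sum_{\BB}a_p(\kappa_{\BB})$, the sum running over $p$-regular connected trace maps with $n$ vertices. A melon is connected with exactly two vertices, hence $a_p(\kappa_{\BB})$ vanishes on every connected map other than a two-vertex melon; this gives $\kappa_n(\mu_p)=0$ for $n\notin\{0,2\}$, while $\kappa_0(\mu_p)=1$ by convention. For $n=2$ only melons contribute and $\kappa_2(\mu_p)=\#\{\text{melonic maps}\}\cdot\tfrac{1}{(p-1)!}$; since there are $(p-1)!$ melonic maps on two vertices with $p$ edges in the map combinatorics of \cite{bonnin2024freenesstensors}, the normalisation $\tfrac{1}{(p-1)!}$ is precisely the one making $\kappa_2(\mu_p)=1$. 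This settles the free-cumulant part of the statement and yields $C_{a_p}(z)=1+z^{2}$.

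Next, since the moment--cumulant relation \eqref{eq:MCbasic}, hence the derivation of Theorem~\ref{thm:MC}, involves only the non-crossing poset and the multiplicativity of cumulants, the identity of Theorem~\ref{thm:MC} holds for the distribution $a_p$ in place of a tensor: with $M(z):=M_{\mu_p}(z)$ one gets $M(z)=C_{a_p}\!\big(zM(z)^{p/2}\big)=1+z^{2}M(z)^{p}$ (only even powers of $z$ occur, in agreement with the parity remark after Theorem~\ref{thm:MC}). Putting $x=z^{2}$ and $F(x):=M(z)$, so $F=1+xF^{p}$, and substituting $F=1+v$ gives $v=x(1+v)^{p}$; Lagrange inversion then yields $[x^{k}]F=[x^{k}]v=\tfrac1k[v^{k-1}](1+v)^{pk}=\tfrac1k\binom{pk}{k-1}=\tfrac{(pk)!}{k!\,((p-1)k+1)!}$, and the elementary identity $\tfrac{(pk)!}{k!\,((p-1)k+1)!}=\tfrac{1}{pk+1}\binom{pk+1}{k}$ rewrites this as $m_{2k}(\mu_p)=\tfrac{1}{p(2k)/2+1}\binom{p(2k)/2+1}{k}$, with $m_{2k+1}(\mu_p)=0$; these are exactly the claimed moments. (Alternatively one could count directly the melonic forests below connected maps, essentially Gurau's argument, but Theorem~\ref{thm:MC} makes this immediate.)

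It remains to check that $(m_n(\mu_p))_{n\ge0}$ is the moment sequence of a compactly supported probability measure. By Stirling, $\binom{pk}{k}$ grows like $C^{k}$ with $C=p^{p}/(p-1)^{p-1}$ (for $p=1$ one gets $m_{2k}\equiv1$, i.e. $\mu_1=\tfrac12(\delta_{-1}+\delta_1)$), so $m_n(\mu_p)\le R^{n}$ with $R=\sqrt{C}$; this exponential bound gives determinacy and, once positivity is known, confines the support to $[-R,R]$. Positivity I would not establish by hand: the numbers $\tfrac{1}{(p-1)k+1}\binom{pk}{k}$ are the Fuss--Catalan numbers, classically the moments of a compactly supported probability measure $\pi^{(p)}$ on $[0,R^{2}]$, so $(m_n(\mu_p))_n$ is the moment sequence of the symmetric compactly supported measure obtained by pushing forward $\pi^{(p)}\otimes\mathrm{Unif}\{\pm1\}$ along $(x,\varepsilon)\mapsto\varepsilon\sqrt{x}$ (equivalently, this is the convergence of a Wigner tensor recalled from \cite{gurau2020generalizationwignersemicirclelaw}). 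The main obstacle is exactly this last point: everything upstream is a formal manipulation of power series (Lagrange inversion) together with Stirling, apart from the bookkeeping of melonic maps which one must pin down in the conventions of \cite{bonnin2024freenesstensors} to get $\kappa_2(\mu_p)=1$ on the nose, whereas turning the explicit moment sequence into an honest probability measure rests on positive semidefiniteness of all Hankel forms, which here is imported rather than verified directly.
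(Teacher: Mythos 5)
Your proof is correct and follows essentially the same route as the paper: read off $C_{a_p}(z)=1+z^2$ from the melonic cumulants, feed this into the analytic moment--cumulant formula of Theorem~\ref{thm:MC} to obtain $M(z)=1+z^2M(z)^p$, and identify the coefficients as Fuss--Catalan numbers (the paper's Lemma in Section 4.2 does precisely this, running the argument in the form ``given $C_\pi(z)=1+z^2$, conclude $\pi=\mu_p$''). Your version is slightly more explicit in carrying out the Lagrange inversion and in handling existence and compact support via the known Fuss--Catalan distribution and Stirling's estimate, which the paper asserts without detail in its definition box; these additions are welcome but do not change the underlying argument.
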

The number $F_p(k)=\frac{1}{pk+1}\binom{pk+1}{k}$ is the $k$-th Fuss-Catalan number of order $p$. They extend the Catalan numbers for $p\geq2$.
\begin{theorem}[Free Poisson law in high order]
    For $p\geq 1$, the measure $\nu_{2p,t}:=\mu_{b_{2p,t}}$ is a compactly supported probability measure on $\dR$ having for moments 
%    \begin{itemize}
%        \item[-] if $p$ is even,
        $$ m_n(\nu_{2p,t}) = \sum_{b=1}^n \frac{1}{b} \binom{n-1}{b-1} \binom{pn}{b-1} t^b,$$
%        \item[-] if $p$ is odd,
%        $$ m_n(\nu_{p,t}) = \sum_{b=1}^n \frac{1}{b} \binom{n-1}{b-1} \binom{pn/2}{b-1} t^b \mbox{ if $n$ even} \quad \AND \quad m_n(\nu_{p,t})=0 \mbox{ if $n$ odd},$$
%    \end{itemize}
    and for free cumulants
%    \begin{itemize}
%        \item[-] if $p$ is even,
        $$ \kappa_n(\nu_{2p,t})=t \mbox{ for all $n\geq 1$},$$
 %       \item[-] if $p$ is odd,
 %       $$ \kappa_n(\nu_{p,t})=t \mbox{ for all even $n\geq 1$} \quad \AND \quad \kappa_n(\nu_{p,t})=0 \mbox{ for all odd $n\geq 1$}.$$
 %   \end{itemize}
\end{theorem}
The polynomial $F_{p,k}(t)=\sum_{b=1}^n \frac{1}{b} \binom{k-1}{b-1} \binom{pk}{b-1} t^b$ is called the $k+1$-th Fuss-Narayana polynomial of order $p$ and parameter $t$. In particular, we have that $F_{p,k}(1)=F_{p-1}(k)$. Interestingly, we find relations between the high order semicircular and free Poisson of parameter $t$ when $t=1$ and when $t \rightarrow \infty$, extending the ones known for $p=2$, see later.
\par We may also define $\omega_{p,t}$ the Marčenko-Pastur law of parameter $\tau$ as the free Poisson law of parameter $1/ \tau$ dilated by a factor $1 / \tau^{p/2}$. In the matrix case, that is associated to the densities
$$ d \nu_{2,t}(x)=(1-t)_{+} \delta_0 + \frac{\sqrt{4t-(x-1-t)^2} dx}{2\pi x} dx, $$
$$ d \omega_{2,t}(x)=(1-\frac{1}{\tau})_{+} \delta_0 + \frac{\sqrt{4\tau-(x-1-\tau)^2}}{\tau \times 2\pi x } dx,$$
which have both moments given by some Narayana numbers. This is due to a symmetry in these numbers changing $b$ in $n-b$ which does not subsist when $p>2$, see Remark \ref{rem:MP} for more details. That is why we chose this convention in the definitions of free Poisson and Marčenko-Pastur laws.
\paragraph{Convergence results.} The analogy between these laws in high order and the usual ones is not simply justified by the extension of the moments and free cumulants to higher $p$. We have also the main classical results of convergence which remains true. In particular, we have convergence of a Wigner tensor towards the higher order semicircular law, convergence of a Wishart tensor towards the higher order free Poisson law, and a free Central Limit Theorem. 
\par Formal definitions of a Wigner and Wishart tensor will come in Section \ref{sec:wi}. Just in some words, a Wigner tensor is of the form
$$ \mathbf{W}_N :=\frac{X}{N^{\frac{p-1}{2}}},$$
where $X$ is a real symmetric $p$-order tensor having entries with mean $0$, variance $\frac{1}{(p-1)!}$ and bounded moments. This is in particular the case when $X$ belongs to the Gaussian Orthogonal Tensor Ensemble. On the other hand, a $2p$-order Wishart tensor is a real symmetric tensor of the form 
$$ \mathcal{W}^k_N := \frac{x_1^{\underline{\otimes} 2}+\ldots+x_k^{\underline{\otimes} 2}}{N^{p}},$$
where the $x_{i}$ are $p$-order tensors with {\em i.i.d} entries with mean $0$, variance $\frac{1}{(p!)^{1/p}}$ and bounded moments and $x_1^{\underline{\otimes} 2}$ is a symmetrization of $x_i \otimes x_i$. All of that is for even $p$, see the odd case and formal definitions in Section \ref{sec:wi}. We then have the following convergences.
\begin{theorem}[High order Wigner Theorem]\label{thm:wigner}
    Let $\mathbf{W}^{p}_N$ be a $p$-order Wigner tensor and $\mu_{\mathbf{W}_N}$ its associated measure. Then, when $N \rightarrow \infty$, we have weak convergence in probability,
    $$ \mu_{\mathbf{W}_N} \rightarrow \mu_{p}.$$
    In other words, we are going to prove that when $N \rightarrow \infty$, 
    $$ \mathbb{E} [m_n(\mathbf{W}_N)] = m_n(\mu_{p}) + \mathcal{O}(1/N) \quad \AND \quad \mathrm{Var}[m_n(\mathbf{W}_N)] =\mathcal{O}(1/N^2) .$$
\end{theorem}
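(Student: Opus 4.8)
The plan is to run the moment method. First expand $m_n(\mathbf{W}_N)$ over maps: by \eqref{eq:trace2} and the normalization $\mathbf{W}_N=X/N^{(p-1)/2}$, for a $p$-regular connected rooted trace map $\BB$ with $n$ vertices (hence $pn/2$ edges) and all vertices decorated by $\mathbf{W}_N$ one has
$$\BB\big((\mathbf{W}_N)_{v\in V}\big)=\frac{1}{N^{1+(p-1)n/2}}\sum_{i\in\INT{N}^{E(\BB)}}\ \prod_{v\in V(\BB)}X_{i_{\partial v}},$$
so that $\dE[m_n(\mathbf{W}_N)]=\sum_{\BB}N^{-1-(p-1)n/2}\sum_{i}\dE\big[\prod_{v}X_{i_{\partial v}}\big]$, a finite sum over such maps. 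When $p$ is odd there is no $p$-regular map on an odd number of vertices, so the odd moments vanish identically, matching $m_n(\mu_p)=0$; from now on take $n$ even.

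The core is the asymptotic evaluation of each inner sum. Fix $\BB$ and split the index assignments $i$ according to the partition of $V(\BB)$ obtained by identifying $v$ with $v'$ whenever $X_{i_{\partial v}}$ and $X_{i_{\partial v'}}$ are the same entry of $X$. Since the entries are independent and centered, $\dE\big[\prod_v X_{i_{\partial v}}\big]$ vanishes unless every block of this partition has size at least $2$, and by the bounded-moment hypothesis it is $O_{n,p}(1)$ in the remaining cases. For a fixed such partition, the identifications it forces on $E(\BB)$ — together with the leg-to-leg matching inside each block — turn $\BB$ into a new decorated combinatorial map whose number of faces $F$ is the exponent of $N$ in the count of compatible $i$; one has $F\le 1+(p-1)n/2$, and writing $F=1+(p-1)n/2-\delta$ with $\delta\ge 0$ a (Gurau-degree-type) defect, one checks that $\delta=0$ exactly for the melonic configurations — all blocks of size $2$, the two vertices of a block glued by a straight leg matching, the blocks nested in a non-crossing fashion — so that only for these does the power of $N$ cancel the prefactor, each block then contributing $\dE[X^2]=\tfrac1{(p-1)!}$. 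Summing the melonic contributions over $\BB$ and recognizing the result, via the moment–cumulant relation \eqref{eq:MCbasic}, the definition of $a_p$ (free cumulants supported on melons), and the preceding high-order semicircular law, as $m_n(a_p)=m_n(\mu_p)$ gives $\dE[m_n(\mathbf{W}_N)]=m_n(\mu_p)+O(1/N)$; this moment convergence is also the content of Gurau's generalized semicircle law \cite{gurau2020generalizationwignersemicirclelaw} in the present normalization.

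For the variance, expand $\dE[m_n(\mathbf{W}_N)^2]$ as a double sum over pairs of maps $(\BB_1,\BB_2)$ with index assignments $i^{(1)},i^{(2)}$ and set $P_j:=\prod_{v\in V(\BB_j)}X_{i^{(j)}_{\partial v}}$; then $\VAR[m_n(\mathbf{W}_N)]=\sum_{\BB_1,\BB_2}N^{-2-(p-1)n}\sum_{i^{(1)},i^{(2)}}\big(\dE[P_1P_2]-\dE[P_1]\dE[P_2]\big)$, and each summand vanishes unless the two copies share at least one entry of $X$. Sharing an entry fuses the two maps into a single connected structure on the $2n$ vertices, which forces the total face count to drop by two relative to the product of the individual leading face counts — the analogue of the $N^{-2}$ suppression of the connected two-point function for Wigner matrices — whence $\VAR[m_n(\mathbf{W}_N)]=O(1/N^2)$, and weak convergence in probability follows from Chebyshev's inequality.

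The main obstacle is the power-counting that isolates the melonic configurations as dominant: for $p=2$ this is the classical topological (genus) expansion, but for $p\ge 3$ one must set up the right notion of faces/jackets for the glued map, prove non-negativity of the defect $\delta$, and classify the $\delta=0$ configurations as the melonic ones — the tensor version of melonic dominance. Moreover, obtaining the stated rates $O(1/N)$ and $O(1/N^2)$, rather than a bare $o(1)$, requires checking that the first subleading term carries a full extra power of $N^{-1}$ (resp.\ $N^{-2}$), which relies on the symmetry of $X$. These combinatorial facts are exactly what is developed in Section \ref{sec:wi} (and in \cite{gurau2020generalizationwignersemicirclelaw}), so the argument reduces to invoking them together with the identification of the limit made above.
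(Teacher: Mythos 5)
Your proposal runs the moment method in essentially the same way as the paper's sketch (and the cited proofs): expand $m_n(\mathbf{W}_N)$ over $p$-regular maps, partition vertices by coincidence of the corresponding entries of $X$ (exploiting symmetry), kill singleton blocks by centering, and isolate the leading term via the power count on distinct indices, which forces all blocks to be pairs and the configuration to be melonic; the variance then drops by $N^{-2}$ because the two copies must share an entry. Your "faces/Gurau-degree defect $\delta\ge 0$" phrasing is exactly the paper's "maximal number of distinct indices $1+(p-1)n/2$ via Euler's formula on the reduced tree" in different clothing, so this is the same argument with the same reliance on the combinatorial input from \cite{bonnin2024universalitywignerguraulimitrandom} and \cite{gurau2020generalizationwignersemicirclelaw}.

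One tiny slip: "from now on take $n$ even" is only automatic for $p$ odd (no $p$-regular map on $n$ odd vertices). For $p$ even and $n$ odd the maps exist and the expectation is not identically zero; rather, since melonic (all-blocks-size-2) configurations require $n$ even, the odd case falls into Categories 2–3 and is $O(1/N)$ by the same classification, so the conclusion still holds but is not "identically" zero.
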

\begin{theorem}[High order Marčenko-Pastur Theorem]\label{thm:Marčenko}
    Let $\mathcal{W}^{p,k}_N$ be a $p$-order Wishart tensor such that $k_N / N^{p/2} \rightarrow t \in (0,\infty)$ and $\mu_{\mathcal{W}_N}$ its associated measure. Then, when $N \rightarrow \infty$, we have weak convergence in probability,
    $$ \mu_{\mathcal{W}_N} \rightarrow \nu_{p, t}.$$
    In other words, we are going to prove that when $N \rightarrow \infty$, 
    $$ \mathbb{E} [m_n(\mathcal{W}_N)] = m_n(\nu_{p, t}) + \mathcal{O}(1/N) \quad \AND \quad \mathrm{Var}[m_n(\mathcal{W}_N)] =\mathcal{O}(1/N^2) .$$
\end{theorem}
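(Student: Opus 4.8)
\emph{Approach.}
I would prove both theorems by the method of moments, following the blueprint of the Wigner case (Theorem~\ref{thm:wigner}), which the paper recalls; the genuinely new ingredient is the colour label carried by the rank-one pieces of the Wishart tensor, which is exactly what conjures up the parameter $t$. Concretely: (i) write $\mathbb{E}[m_n(\mathcal{W}_N)]$ as a finite sum, indexed by $p$-regular connected rooted trace maps $\BB$ with $n$ vertices, of the expected trace invariants; (ii) expand each Wishart tensor into its rank-one summands and take the expectation, reducing the problem to a sum over triples (a map $\BB$, a colouring of its vertices, a Wick pairing of the resulting tensor atoms); (iii) carry out the $N\to\infty$ power count, showing that the surviving configurations are precisely the ``planar/melonic'' ones and that the number of colour blocks $b$ produces a factor $t^{b}$; (iv) identify the $N\to\infty$ answer with the moments of $\nu_{p,t}$; (v) rerun the same expansion for $\mathbb{E}[m_n(\mathcal{W}_N)^2]$ to get the variance bound. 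The odd-$p$ case is handled by the same scheme with the modified Wishart of Section~\ref{sec:wi} and odd multicycles in place of multicycles.

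\emph{Expansion and reduction.}
By definition $m_n(\mathcal{W}_N)=\sum_{\BB}\BB(\mathcal{W}_N,\ldots,\mathcal{W}_N)$. Substituting $\mathcal{W}_N=N^{-p/2}\sum_{a=1}^{k}x_a^{\underline\otimes 2}$ at every vertex and expanding the symmetric tensor products, $\BB(\mathcal{W}_N,\ldots)$ becomes a normalised sum over a colouring $\mathbf{a}\in\INT{k}^{V(\BB)}$ of the vertices and, at each vertex, a way of splitting its $p$ incident half-edges into two ordered groups of size $p/2$; each vertex thus contributes two tensor ``atoms'', each an entry of some $x_{a_v}$ indexed by the relevant edge-variables. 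Taking expectations and using (a) independence of distinct colours, (b) the i.i.d.\ mean-zero property, (c) the bounded-moment hypothesis, the expectation of the product of the $2n$ atoms is $O(1)$ and, up to lower-order terms in $N$, is supported on the perfect matchings $\pi$ of the $2n$ atoms for which matched atoms carry the same colour and the same index vector (matchings with a block of size $\ge 3$ cost extra powers of $N$ and drop out). Hence $\mathbb{E}[m_n(\mathcal{W}_N)]$ is, to leading order, a sum over triples $(\BB,\text{splitting},\pi)$ of an explicit power of $N$ times $k^{\#(\text{colour blocks})}$ times a fixed constant.

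\emph{Power count and the limit.}
For a fixed triple, the colour constraint forces $\mathbf{a}$ to be constant on each block of the partition of $V(\BB)$ obtained by joining $v$ and $w$ whenever an atom at $v$ is matched with one at $w$; let $b$ be the number of blocks and $f$ the number of surviving edge-variables after the identifications imposed by $\pi$. Using $k_N/N^{p/2}\to t$, the contribution is $C\,t^{\,b}\,N^{\,\delta(\BB,\pi)}(1+o(1))$ with $\delta$ a fixed affine function of $f,b,n,p$. The core estimate — the analogue of the genus bound used for the Wigner tensor, proved by building an auxiliary map out of $(\BB,\pi,\text{splitting})$ and invoking Euler's formula — is $\delta(\BB,\pi)\le 0$, with equality exactly for the planar/melonic triples, that is, in the language of the non-crossing poset of Section~\ref{sec:mom}, those obtained by a non-crossing gluing of elementary blocks. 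Read through \eqref{eq:MCbasic}, this says $\mathbb{E}[\kappa_\BB(\mathcal{W}_N,\ldots)]\to 0$ unless $\BB$ is a multicycle (an odd multicycle for odd $p$), in which case it tends to the unique planar contribution, which carries exactly one colour block — one factor of $t$ — times the symmetry factor coming from the $p/2$-fold matching of parallel edges at each vertex, i.e.\ exactly $b_{p,t}(\kappa_\BB)$ with the calibration of the entry variance fixed in Section~\ref{sec:wi}. Summing over the $n$-vertex maps gives $\mathbb{E}[\kappa_n(\mathcal{W}_N)]\to t$ for even $p$ (and for odd $p$, $\to t$ for $n$ even, $\to 0$ for $n$ odd) with error $O(1/N)$, and feeding this into the moment--cumulant relation of Theorem~\ref{thm:MC} — equivalently, directly resumming the planar triples, whose number with $b$ colour blocks is the Fuss--Narayana number $\frac{1}{b}\binom{n-1}{b-1}\binom{pn/2}{b-1}$ — yields $\mathbb{E}[m_n(\mathcal{W}_N)]=m_n(\nu_{p,t})+O(1/N)$. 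For odd $p$ the odd moments vanish because a $p$-regular map on an odd number of vertices would need a half-integer number of edges.

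\emph{Variance, and the main obstacle.}
For $\mathrm{Var}[m_n(\mathcal{W}_N)]=\mathbb{E}[m_n(\mathcal{W}_N)^2]-\mathbb{E}[m_n(\mathcal{W}_N)]^2$ I would run the same expansion over ordered pairs of maps $(\BB_1,\BB_2)$ with a pairing of all $4n$ atoms; the pairings that split into one pairing within $\BB_1$ and one within $\BB_2$ rebuild $\mathbb{E}[m_n(\mathcal{W}_N)]^2$ and cancel, leaving the ``straddling'' configurations. The same power count, now applied to the disconnected map $\BB_1\sqcup\BB_2$, shows that a single straddling arc is forbidden by the parity/degree constraint, so at least two are needed, and the resulting configuration obeys a strict version of $\delta\le 0$ losing a further $N^{-2}$, whence $\mathrm{Var}[m_n(\mathcal{W}_N)]=O(1/N^2)$; weak convergence in probability then follows by Chebyshev (and Borel--Cantelli along $N\in\dN$ for almost-sure convergence, if desired). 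The main obstacle is the power count of the third step: establishing the sharp topological inequality with the correct equality locus in the tensor setting — where the leg-splitting at each vertex, the colouring, and the non-crossing poset on $p$-regular maps all interact — and then checking that the equality cases are enumerated \emph{exactly} by the Fuss--Narayana polynomials, not merely that their total at $t=1$ is the Fuss--Catalan number $F_{p-1}(n)$, while keeping the subleading error genuinely $O(1/N)$.
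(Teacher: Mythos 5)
Your proposal takes essentially the same route as the paper. The paper implements exactly your steps (i)–(v) via the $\Delta$-graph formalism of Bai–Silverstein (bipartite graphs with colour vertices in $\INT{k}$ and index vertices in $\INT{N}$), classifies them into three categories where Category 1 realises your ``planar'' triples, and your ``main obstacle'' — the sharp power count with equality locus enumerated exactly by $F^b_{p/2}(n)$ — is precisely Lemmas~\ref{lem:isomo}, \ref{lem:delta3} and \ref{lem:delta1}, with the Fuss--Narayana count coming from a bijection between $\Delta_1(n,b)$-graphs and non-crossing partitions of $\INT{np/2}$ with $b$ blocks of size a multiple of $p/2$; the variance bound is proved exactly as you sketch, by showing a straddling configuration must lose at least two index identifications.
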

Theorem \ref{thm:Marčenko} will be proven in Section \ref{sec:MPconv}. Theorem \ref{thm:wigner} has already been proved in \cite{gurau2020generalizationwignersemicirclelaw} in the Gaussian case and in \cite{bonnin2024universalitywignerguraulimitrandom} for the general case. We will give a reminder of the ideas of the proof in Section \ref{sec:SCconv}. We note that a stronger result can in fact be stated, that is that the convergence is in distribution (at the level of the individual maps), which is given in [\cite{bonnin2024universalitywignerguraulimitrandom} Lemma 3.16] for Wigner, and can be easily deduced from the proof for Wishart. This is the content of the following Lemma \ref{lem:ind}, but will not be our interest in this paper as we consider the moments and not the distribution of a tensor.
\begin{lemma}\label{lem:ind}
    We have convergence in distribution of $\mathbf{W}^{p}_N$ (respectively $\mathcal{W}^{p,k}_N$) towards $a_p$ (respectively $b_{p,t}$), that is for all trace map $\BB$, $\BB(\mathbf{W}) \rightarrow a_p(\BB)$ (resp. $\mathcal{W}$ and $b$).
\end{lemma}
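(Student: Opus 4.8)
\emph{Sketch of the proof.} The plan is to treat the two tensors separately. For the Wigner tensor $\mathbf{W}^{p}_N$ the statement is exactly [\cite{bonnin2024universalitywignerguraulimitrandom}, Lemma 3.16]: there the surviving terms of $\EE[\BB(\mathbf{W})]$ are indexed by pairings of the $n$ vertices of $\BB$, the top-order ones are those for which the pairing realises a sub-map $\BB'\le\BB$ all of whose components are melons, and summing gives $\sum_{\BB'\le\BB}a_p(\kappa_{\BB'})=a_p(\BB)$ through the moment--cumulant relation \eqref{eq:MCbasic}. For the Wishart tensor $\mathcal{W}^{p,k}_N$ I would carry out the moment computation proving Theorem \ref{thm:Marčenko} in Section \ref{sec:MPconv}, but tracking one trace map at a time rather than summing over maps. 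Fix a connected $p$-regular trace map $\BB$ with vertex set $V$ ($|V|=n$) and edge set $E$; substituting $\mathcal{W}^{p,k}_N = N^{-p/2}\sum_{a=1}^{k}x_a^{\underline{\otimes}2}$ (and the asymmetric variant of Section \ref{sec:wi} when $p$ is odd) into \eqref{eq:trace2} turns $\BB(\mathcal{W})$ into a sum over colourings $c\colon V\to\INT{k}$ and index assignments $\INT{N}^{E}$ of a product $\prod_{v\in V}$ of two entries of the $(p/2)$-tensor $x_{c(v)}$. Taking $\EE$ and using independence across colours together with the mean-zero, bounded-moment hypothesis, only the terms in which, inside each colour class, every tensor entry occurs at least twice survive; entries occurring three or more times, and non-matching coincidences of entries, contribute $\mathcal{O}(1/N)$ as in the classical Wishart computation.

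I would then classify the surviving configurations by the partition $\pi$ of $V$ into colour classes together with, inside each class, a perfect matching of its tensor entries; such a datum imposes identifications among the indices of $\INT{N}^{E}$, and, after the $N^{-\gamma}$ normalisation and the $n$ factors $N^{-p/2}$ from the vertices, its contribution is a combinatorial constant times $k^{|\pi|}$ and a power of $N$. Rewriting the colour sum through $k_N/N^{p/2}\to t$ this becomes $t^{|\pi|}N^{-g}$ for some $g\ge 0$ produced by a genus-type (Euler-characteristic) count, so the configuration survives the limit iff $g=0$; I expect that $g=0$ holds exactly for the configurations in bijection with sub-maps $\BB'\le\BB$ (in the non-crossing poset of Section \ref{sec:mom}) all of whose connected components are multicycles when $p$ is even, odd multicycles when $p$ is odd, with $|\pi|$ the number of components. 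Each component then supplies a factor $t$ and the internal combinatorics supplies the factor $1/[(p/2)!]$ per vertex, resp.\ $1/[((p+1)/2)!\,((p-1)/2)!]$, so that $\EE[\BB(\mathcal{W})]\to\sum_{\BB'\le\BB}b_{p,t}(\kappa_{\BB'})=b_{p,t}(\BB)$ by \eqref{eq:MCbasic}.

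Finally, for the fluctuations I would apply the same expansion to $\EE[\BB(\mathcal{W})^2]$: any configuration gluing the two copies of $\BB$ together loses at least one power of $N$, so the leading part factors as $\EE[\BB(\mathcal{W})]^2$ and $\VAR[\BB(\mathcal{W})]=\mathcal{O}(1/N^2)$. Hence $\BB(\mathcal{W})\to b_{p,t}(\BB)$ in probability, and since the limit is the deterministic number $b_{p,t}(\BB)$ this is the asserted convergence in distribution at the level of individual maps. I expect the main obstacle to be the power-counting step: proving the genus-type inequality and, above all, identifying its equality cases uniformly over all trace maps $\BB$, and pushing the non-Gaussian (bounded-moment) universality through it; the odd-$p$ construction, where the two halves of a vertex have unequal size, needs extra bookkeeping but no new idea.
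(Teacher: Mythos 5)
Your proposal matches the paper's (very brief) treatment: the paper itself gives no proof of Lemma~\ref{lem:ind}, instead citing [\cite{bonnin2024universalitywignerguraulimitrandom}, Lemma 3.16] for the Wigner case and remarking that the Wishart case ``can be easily deduced from the proof'' of Theorem~\ref{thm:Marčenko}, i.e.\ from the $\Delta$-graph computation of Propositions~\ref{prop:mom_mp}--\ref{prop:var_mp} read one trace map at a time --- which is precisely your plan, down to the identification of surviving $\Delta_1$-type configurations with $\BB'\le\BB$ having multicycle (resp.\ odd-multicycle) components and the appeal to \eqref{eq:MCbasic}. The only detail to tighten is the variance step: the claimed $\mathcal{O}(1/N^2)$ comes from the two-power loss established via the Lemma~\ref{lem:delta3}-style count on the union graph, not merely from ``losing at least one power of $N$'' as you write, though $\mathcal{O}(1/N)$ already suffices for the convergence in probability you need.
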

The last convergence result we state now is the free Central Limit Theorem for tensors.
\begin{theorem}[Free CLT]\label{thm:clt}
    Let $p\geq 2$ be a fixed even integer and $(T_i)_{i\geq 1}$ a collection of tensors of order $p$ such that 
    $$ m_1(T_i)=0 \quad \AND \quad m_2(T_i)=1 \quad \AND \quad \forall i, \forall \BB \in \cup_n \cB_n, \vert \BB(T_i) \vert \leq C(\BB) . $$
    Denote $T^k= \frac{1}{\sqrt{k}} \sum_{i=1}^k T_i$. Then when $k$ goes to infinity, 
    $$ m_n(T^k) \rightarrow m_n(\mu_p).$$
\end{theorem}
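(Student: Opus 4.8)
The plan is to reduce the statement to a convergence of free cumulants and then transfer it back to moments through Theorem~\ref{thm:MC}. Writing $C_{T^k}(z)=\sum_{n\ge 0}\kappa_n(T^k)z^n$, I would first observe that by Theorem~\ref{thm:MC} (equivalently, by the combinatorial moment--cumulant formula \eqref{eq:MCbasic}), for each fixed $n$ the quantity $m_n(T^k)$ is a fixed universal polynomial in $\kappa_0(T^k),\dots,\kappa_n(T^k)$, obtained by solving $M(z)=C(zM(z)^{p/2})$ order by order in $z$; this polynomial is independent of $k$, hence continuous. Since $\mu_p$ has free cumulants $\kappa_0(\mu_p)=\kappa_2(\mu_p)=1$ and $\kappa_n(\mu_p)=0$ otherwise, i.e. $C_{\mu_p}(z)=1+z^2$, it then suffices to show that $\kappa_n(T^k)\to\kappa_n(\mu_p)$ for every $n\ge 0$.

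The key computation is the identity $\kappa_n(T^k)=\tfrac1{k^{n/2}}\sum_{i=1}^k\kappa_n(T_i)$. To obtain it, recall that $\kappa_n(T^k)$ is the sum over $p$-regular connected trace maps $\BB$ with $n$ vertices of $\kappa_\BB$ with all vertices decorated by $T^k=\tfrac1{\sqrt k}\sum_i T_i$, and that each $\kappa_\BB$ is $n$-linear in its $n$ vertex decorations (the trace invariant is multilinear and the Möbius inversion \eqref{eq:MCbasic} is linear), so
$$\kappa_\BB\big((T^k)_v\big)=\frac1{k^{n/2}}\sum_{f\colon V\to[k]}\kappa_\BB\big((T_{f(v)})_v\big).$$
Since the $(T_i)$ are freely independent of even order $p$, mixed cumulants vanish, so $\kappa_\BB((T_{f(v)})_v)=0$ unless $f$ is constant; summing over $\BB$ gives the claimed identity. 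I would then check that $\kappa_n(T_i)$ is bounded uniformly in $i$ --- it is a finite integer combination, via Möbius inversion, of trace invariants $\BB'(T_i)$ with $\BB'\le\BB$ and $n$ vertices, each bounded by $C(\BB')$ by hypothesis --- that $\kappa_1(T_i)=m_1(T_i)=0$ (the only one-vertex $p$-regular map is the bouquet, where the poset is trivial so $\kappa_\BB=\BB$), and that $\kappa_2(T_i)=m_2(T_i)=1$ (expanding $M=C(zM^{p/2})$ to order two gives $m_2=\tfrac p2\kappa_1^2+\kappa_2$, and $\kappa_1(T_i)=0$).

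Plugging these in: $\kappa_0(T^k)=1$, $\kappa_1(T^k)=\tfrac1{\sqrt k}\sum_i0=0$, $\kappa_2(T^k)=\tfrac1k\sum_{i=1}^k1=1$, and for $n\ge3$, $|\kappa_n(T^k)|\le k^{1-n/2}\sup_i|\kappa_n(T_i)|\to0$; hence $\kappa_n(T^k)\to\kappa_n(\mu_p)$ for all $n$, and the reduction of the first paragraph gives $m_n(T^k)\to m_n(\mu_p)$. The step I expect to be the main obstacle is the vanishing of mixed cumulants: this is precisely where freeness of the $(T_i)$ is used, and where the evenness of $p$ is essential (mixed cumulants are only known to characterize/behave well for even families, as recalled in the introduction), which is why the hypothesis that $p$ is even cannot be dropped; the remaining steps are bookkeeping with the moment--cumulant relation and the uniform bounds $|\BB(T_i)|\le C(\BB)$.
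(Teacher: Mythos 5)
Your proposal is correct and follows essentially the same route as the paper: reduce to showing $\kappa_n(T^k)\to\kappa_n(\mu_p)$, use multilinearity of $\kappa_\BB$ together with the vanishing of mixed cumulants for freely independent even families to get $\kappa_n(T^k)=k^{-n/2}\sum_{i=1}^k\kappa_n(T_i)$, then compute $\kappa_1=0$, $\kappa_2=1$ and bound $\kappa_n$ for $n\ge 3$ using the uniform bounds $|\BB(T_i)|\le C(\BB)$. You are more explicit than the paper about why cumulant convergence implies moment convergence (namely $m_n$ is a fixed polynomial in $\kappa_0,\dots,\kappa_n$), and your expansion $m_2=\tfrac p2\kappa_1^2+\kappa_2$ is actually the correct coefficient where the paper writes $\tfrac{p^2}{4}$ --- an immaterial discrepancy since $m_1=0$ in the hypotheses.
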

We initially presented this result in \cite{bonnin2024freenesstensors} (Section 3.3), but not considering the sum over connected maps having the same number of vertices. We will provide it in this setting in Section \ref{sec:laws}.

\paragraph{Free convolution of tensors.} Theorem \ref{thm:MC} allows us to compute the law of the sum of two semicircular or free Poisson freely independent. For the rest of this introduction, we fix $p\geq 2$ an even integer, as the following proofs rely on the vanishing of mixed cumulants.
\begin{corollary}
    The free convolution of two freely independent semicirculars of order $p$ is a semicircular of order $p$ dilated by a factor $\sqrt{2}$. That is,
    $$ \mu_p \oplus_p \mu_p = \mu_p^{(\sqrt{2})}.$$
\end{corollary}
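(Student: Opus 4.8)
The plan is to work entirely at the level of the formal power series $M$ and $C$ provided by Theorem~\ref{thm:MC}. Write $s$ for either summand, a semicircular of order $p$, so by the Semicircular Theorem its cumulant series is $C_s(z) = 1 + z$ (only $\kappa_0 = \kappa_2 = 1$ survive, and recall that ``$\kappa_2$'' here is indexed by the number of vertices, which is $2$, i.e. the coefficient of $z^2$ is actually governed by a melon-type term — I would first pin down the exact normalization so that $C_s(z) = 1 + z$, adjusting the statement of the theorem's cumulant list if the indexing convention makes it $1 + z$ versus $1 + z^2$; this bookkeeping is the first thing to nail down). The key structural input is that for $p$ even, mixed free cumulants of freely independent families vanish, so the free cumulants are additive: the order-$p$ tensor $s \oplus_p s$ has cumulant series $C_{s \oplus_p s}(z) = C_s(z) + C_s(z) - 1 = 1 + 2z$, where the $-1$ corrects for the shared constant term $\kappa_0 = 1$.

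Next I would feed this into the analytic moment–cumulant relation. Writing $M = M_{s \oplus_p s}$, Theorem~\ref{thm:MC} gives the fixed-point equation $M(z) = C_{s\oplus_p s}\big(z M(z)^{p/2}\big) = 1 + 2 z M(z)^{p/2}$. On the other hand, the semicircular of order $p$ dilated by $\sqrt{2}$, call it $\mu_p^{(\sqrt 2)}$, has moments $m_n(\mu_p^{(\sqrt2)}) = 2^{n/2} m_n(\mu_p)$ (an $n$-vertex trace invariant with all vertices decorated by $\sqrt 2\, T$ picks up $(\sqrt2)^n = 2^{n/2}$), hence $M_{\mu_p^{(\sqrt2)}}(z) = M_{\mu_p}(\sqrt 2\, z)$ — more precisely $M_{\mu_p^{(\sqrt 2)}}(z) = \sum_n 2^{n/2} m_n(\mu_p) z^n$, which since only even $n$ contribute equals $\sum_k 2^k m_{2k}(\mu_p) z^{2k} = M_{\mu_p}(\sqrt 2 z)$ interpreted as an even series, i.e. a genuine power series in $z$. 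I would then verify that $\widetilde M(z) := M_{\mu_p^{(\sqrt2)}}(z)$ satisfies the same fixed-point equation: from $M_{\mu_p}(w) = C_{\mu_p}(w M_{\mu_p}(w)^{p/2}) = 1 + w M_{\mu_p}(w)^{p/2}$, substitute $w = \sqrt2\, z$ and check that $\widetilde M(z) = 1 + \sqrt2\, z\, \widetilde M(z)^{p/2}\cdot(\text{power of }\sqrt2)$ — here one must track that $M_{\mu_p}(\sqrt 2 z)$ relates to $\widetilde M(z)$ with the scaling distributed correctly across the $p/2$ power; a clean way is to observe $\widetilde M(z) = M_{\mu_p}(\sqrt2 z)$ and that $M_{\mu_p}(\sqrt 2 z)^{p/2}$ appears multiplied by $\sqrt2 z$, and to massage the dilation exponent: the dilation by $c$ sends the cumulant series $C(z) \mapsto C(c^2 z)$ when $M(z)\mapsto M(c z)$ is compatible with the relation $M(z) = C(zM(z)^{p/2})$, since $M(cz) = C(cz M(cz)^{p/2})$ and one needs $cz M(cz)^{p/2} = (c^2 z')$-type matching — I would carry out this substitution carefully and read off that dilation by $c$ corresponds to replacing $\kappa_2$ (the coefficient of $z$ in $C$) by $c^2 \kappa_2$. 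Setting $c = \sqrt2$ gives exactly $C(z) \mapsto 1 + 2z$, matching $C_{s\oplus_p s}$.

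Finally, since both $M_{s\oplus_p s}$ and $M_{\mu_p^{(\sqrt2)}}$ satisfy the same fixed-point equation $M = 1 + 2zM^{p/2}$ in $\dC\INT{z}$ with $M(0) = 1$, and such an equation determines its solution uniquely in $\dC\INT{z}$ by a term-by-term recursion (the coefficient of $z^n$ on the left is determined by coefficients of $z^{<n}$ on the right), the two series coincide, which is precisely the claimed identity of measures (equivalently, of all moments). The main obstacle I anticipate is purely notational: correctly propagating the dilation factor through the $M^{p/2}$ term and through the indexing convention for free cumulants (vertex-count vs.\ classical degree), so that ``dilation by $\sqrt 2$'' on the measure side matches ``$\kappa_2 \mapsto 2\kappa_2$'' on the cumulant side; the combinatorial and analytic content beyond that is light, being just the uniqueness of the algebraic fixed point and the additivity of cumulants under free convolution (valid here because $p$ is even).
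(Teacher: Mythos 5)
Your strategy is exactly the paper's: use additivity of free cumulants (for $p$ even), write the fixed-point equation from Theorem~\ref{thm:MC}, and match it against the dilated semicircular. But the ``bookkeeping'' issue you flag is a genuine one and, as written, your proof uses the wrong cumulant series. By the paper's definition $C_T(z) = \sum_{n\geq 0}\kappa_n(T)z^n$, and the high-order semicircular has $\kappa_0 = \kappa_2 = 1$ and all other $\kappa_n = 0$, so
$$
C_{\mu_p}(z) = 1 + z^2,
$$
not $1+z$. (Your parenthetical remark that ``$\kappa_2$ is the coefficient of $z$ in $C$'' is where this slips; $\kappa_2$ is the coefficient of $z^2$. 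You may be recalling the classical $R$-transform convention $R(z) = \sum_{n\geq 1}\kappa_n z^{n-1}$, where $\kappa_2$ does multiply $z$, but the series $C$ here is shifted.) Consequently, the cumulant series of the free convolution is $C_{\mu_p\oplus_p\mu_p}(z) = 2(1+z^2)-1 = 1+2z^2$, and Theorem~\ref{thm:MC} gives
$$
M(z) = C_{\mu_p\oplus_p\mu_p}\bigl(zM(z)^{p/2}\bigr) = 1 + 2\bigl(zM(z)^{p/2}\bigr)^2 = 1 + 2z^2 M(z)^p,
$$
not $1 + 2zM(z)^{p/2}$. This matters: the equation $M = 1+2zM^{p/2}$ corresponds (via $C(u) = 1+2u$) to a distribution with nonzero $\kappa_1$ and vanishing $\kappa_2$, which is not a dilated semicircular at all.

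With the correct exponent, the rest of your outline closes cleanly and indeed coincides with the paper's one-line proof. The dilation $\mu_p^{(\sqrt2)}$ satisfies $\kappa_n\mapsto 2^{n/2}\kappa_n$, so $C_{\mu_p^{(\sqrt2)}}(z) = 1+2z^2$ as well; equivalently, substituting $w=\sqrt2\,z$ into $M_{\mu_p}(w) = 1 + w^2 M_{\mu_p}(w)^p$ shows that $\widetilde M(z) := M_{\mu_p}(\sqrt2\,z)$ satisfies $\widetilde M = 1 + 2z^2\widetilde M^p$. Since the fixed-point equation $M = 1+2z^2M^p$ with $M(0)=1$ determines $M$ uniquely in $\dC\INT{z}$ by coefficient recursion, the two moment series agree, which proves $\mu_p\oplus_p\mu_p = \mu_p^{(\sqrt2)}$.
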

\begin{corollary}
    The free convolution of two freely independent free Poisson of order $p$ and parameters $t$ and $t'$ is a free Poisson of order $p$ and parameter $t+t'$. That is,
    $$ \nu_{p,t} \oplus_p \nu_{p,t'} = \nu_{p,t+t'}. $$
\end{corollary}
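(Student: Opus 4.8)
The plan is to argue entirely at the level of free cumulants, exactly as in the proof of the previous corollary. Recall that, because $p$ is even, freeness is characterised by the vanishing of all mixed free cumulants $\kappa_{\BB}$, and that $\nu_{p,t}\oplus_p\nu_{p,t'}$ is by definition the law carrying the moments of $T+T'$ for two freely independent order-$p$ tensors $T,T'$ with distributions $b_{p,t}$ and $b_{p,t'}$ respectively (such tensors exist, e.g. as limits of Wishart tensors by Theorem \ref{thm:Marčenko} and Lemma \ref{lem:ind}).

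First I would establish that free cumulants are additive under $\oplus_p$. The trace invariant $\BB((T_v)_v)$ is multilinear in its decorations, and Moebius inversion on the non-crossing poset preserves multilinearity, so each $\kappa_{\BB}$ is multilinear in the $T_v$. Fix a connected $p$-regular map $\BB$ with $n$ vertices and expand $\kappa_{\BB}((T+T')_v)$ over the $2^n$ ways of decorating each vertex by $T$ or by $T'$; every term involving both tensors is a mixed cumulant, hence vanishes by freeness, leaving only $\kappa_{\BB}((T)_v)+\kappa_{\BB}((T')_v)$. Summing over all connected $p$-regular maps with $n$ vertices gives $\kappa_n(T+T')=\kappa_n(T)+\kappa_n(T')$ for every $n\geq 1$, that is $\kappa_n(\nu_{p,t}\oplus_p\nu_{p,t'})=\kappa_n(\nu_{p,t})+\kappa_n(\nu_{p,t'})$.

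Then I would plug in the free cumulants of the high-order free Poisson law: for even $p$ one has $\kappa_n(\nu_{p,t})=t$ and $\kappa_n(\nu_{p,t'})=t'$ for all $n\geq 1$, so $\kappa_n(\nu_{p,t}\oplus_p\nu_{p,t'})=t+t'=\kappa_n(\nu_{p,t+t'})$ for all $n\geq 1$, while $\kappa_0=1$ on both sides. Hence the cumulant series coincide, $C_{\nu_{p,t}\oplus_p\nu_{p,t'}}(z)=C_{\nu_{p,t+t'}}(z)$ in $\dC\INT{z}$; feeding this into the relation $M(z)=C(zM(z)^{p/2})$ of Theorem \ref{thm:MC}, which determines $M$ uniquely as a formal power series from $C$ (since $M(0)=1$), we get that all moments of the two laws agree. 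As $\nu_{p,t+t'}$ is a compactly supported probability measure and such measures are determined by their moments, this yields $\nu_{p,t}\oplus_p\nu_{p,t'}=\nu_{p,t+t'}$. The main obstacle is the additivity step: one must carefully organise the multilinear expansion over decorated connected maps and justify, through the non-crossing poset, that a connected map carrying a genuinely mixed decoration has vanishing free cumulant — this is exactly where the even-$p$ hypothesis (under which vanishing of mixed cumulants characterises freeness) is needed; everything else is bookkeeping together with the already established Fuss-Narayana moment formula.
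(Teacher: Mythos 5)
Your proof is correct and follows essentially the same route as the paper's: both rely on the additivity of free cumulants under $\oplus_p$ (which the paper has already established in the proposition defining the free convolution, for $p$ even, via vanishing of mixed cumulants) together with the fact that $\kappa_n(\nu_{p,t})=t$ for all $n\geq 1$. The only cosmetic difference is that you match the cumulant sequences directly and then invoke Theorem \ref{thm:MC} to recover the moments, while the paper substitutes into the functional equation $M(z)=C(zM(z)^{p/2})$ and recognises the Fuss--Narayana relation $m(z)=1+\sum_{n\geq1}(t+t')(zm(z)^{p/2})^n$.
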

We may also define the $R$-transform which is additive for two freely independent tensors.
\begin{corollary}
    The $R$-transform and the $Q$-transform are defined as 
    $$ R_{\mu}(z) := \frac{C_{\mu}(z) -1}{z} \quad \AND \quad Q_{\mu}(z) := \frac{C_{\mu}(z)^{p/2} -1}{z}. $$
    For $\mu, \nu$ two compactly supported measures we have
    $$ R_{\mu \oplus_p \nu}(z)= R_{\mu}(z) + R_{\nu}(z), $$
    for all $\vert z \vert$ sufficiently small.
\end{corollary}
We point out that if the measures associated to some models of real symmetric tensors converge to compactly supported probability measure, the measure associated to a given tensor has in general a support on $\dR$, so we must develop subordination functions and other analytic tools to deal with free convolution in a general setting. It is where the $Q$-transform could be useful. These developments will be the subject of further works, jointly with a better understanding of the link between this measure and the eigenvalues of a given tensor, which is still unclear.

\par These results also open paths to treat some questions about high dimension statistics for random tensors. One example of such an application of free convolution, in the matrix case, is given by the work done in \cite{capitaine2016spectrumdeformedrandommatrices}. The author would like to inquire about them in the future.

\subsection{Organization of the paper}
In Section \ref{sec:mom}, we will introduce some formalism and prove the analytic moment-cumulant formula. In Section \ref{sec:laws}, we will define Wigner and Wishart tensors and prove the convergence of their associated laws. In Section \ref{sec:conv}, we will finally introduce tensorial free convolution for compactly supported measures, $R$-transform and give some basic examples.

\subsection{Acknowledgements}

The author wishes to extend his warm gratitude to his thesis advisors, Charles Bordenave, with whom he collaborates most of the time in Marseille, and Djalil Chafaï, with whom he works in Paris, for their guidance, availability, and the wealth of explanations, feedback, and ideas shared during their inspiring discussions.

%%%%%%%%%%%%%%%%%%%%%%%%%%%%%%%%%%%%%%%%%%%%%%%%%%%%%%%

\section{Moments and free cumulants for tensors}\label{sec:mom}
An important difference is that when \cite{bonnin2024freenesstensors} and \cite{kunisky2024tensorcumulantsstatisticalinference} only consider the distribution of the trace invariants individually, we will here define the moments of a tensor on a coarser collection of invariants as done in \cite{gurau2020generalizationwignersemicirclelaw} and \cite{bonnin2024universalitywignerguraulimitrandom}.

\subsection{Maps of tensors} \label{sec:maps}
For integer $n \geq 1$, we set $\INT{n} = \{1,\ldots,n\}$.
\paragraph{Combinatorial maps.} We recall some notions already introduced in \cite{bonnin2024freenesstensors} and we refer to it for more details. The formalism of {\em combinatorial maps} is a way to encode finite graphs $\BB$ equipped with an order of edges attached to each vertex, by a pair of permutations. For $m$ even integer, a combinatorial map $\BB = (\pi,\alpha)$ with $\pi,\alpha \in S_m$ has $m/2$ edges and has vertex set $V(\BB) = \INT{n}$ where $n$ is the number of cycles of $\pi$. The set $\vec E (\BB) = \INT{m}$ are the directed edges (or half-edges), $\pi$ has $n$ cycles ordered by least elements which are the directed edges attached to each vertex and $\alpha$ is an involution without fixed point whose $m/2$ cycles of length $2$ are identified as $E(\BB)$ the edges of $\BB$. 
For $v \in V$, we denote by $\partial v = (e_1,\ldots,e_p) \in \vec E(\BB)^p$ the cycle of $\pi$ associated to $v$. We always choose $e_1$ such that $e_1 = \min \partial v$. The {\em degree} of $v \in V$, $\deg(v)$ is the length of the cycle, that is $p$. We denote by $\cM_0$ the set of such combinatorial maps with no boundaries, which we call {\em trace maps}. 

\par It is also possible to introduce $\cM_q$ the set of combinatorial maps with $q$ boundaries, with the only difference that $m$ is no more necessarily even and $\alpha$ has $q$ fixed points which are the boundaries of the maps. A tensor $T$ of order $p$ is then represented has a map with one vertex decorated by $T$ and $p$ boundary edges. The set of all combinatorial maps is denoted $\cM = \sqcup_q \cM_q$. This will not be the center of our interest in this work so we do not give more details.

\par Some examples are given in Figure \ref{fig:map1}. In particular, we describe some simple maps appearing in the sequel. For $p \geq 1$, the {\em melon} maps of degree $p$ (or {\em Frobenius pair} in the terminology of \cite{kunisky2024tensorcumulantsstatisticalinference}), denoted $\mathfrak{f}_p^{\sigma}$ for some $\sigma$, are the maps with two vertices and $p$ edges between them : $\pi  =(1,\ldots,p)(p+1,\ldots,2p)$ and $\alpha=(1,p+\sigma(1))\cdots(p,p+\sigma(p))$. There are $(p-1)!$ such maps non equivalent, the canonical one being again $\mathfrak{f}_p^{\mathrm{id}}$. For $p = 2t$ even, the {\em bouquet} maps, denoted $\BB_p^{\sigma}$ for some $\sigma$, are the ones with a single vertex and $t$ self-loops : $\pi  =(1,\ldots,p)$ is a cycle and $\alpha = (\sigma(1) , \sigma(2)) \cdots (\sigma(p-1), \sigma(p))$. The canonical one is $\BB_p^{\mathrm{id}}$. It is a particular case of {\em multicycle} map, that is $n\geq 1$ vertices $p/2$ edges between the $i$-th and the $i+1$-th ones for all $1\leq i \leq n$ ($n+1=1$). For a given choice of inputs and outputs edges, there are $[(p/2)!]^n$ such maps. For $p$ odd, an {\em odd multicycle} is a map with $2n$ vertices and $(p+1)/2$ edges between the $2i-1$-th and the $2i$-th ones, $(p-1)/2$ edges between the $2i$-th and the $2i+1$-th ones, for all $1\leq i \leq n$ ($2n+1=1$).
\begin{figure}
\begin{minipage}[c]{.46\linewidth}
    \centering
    \begin{tikzpicture}[scale=0.5]
    \filldraw[gray] (0,0) circle (5pt);
    \draw (0,0) node[left] {$M$};
    \draw (0,0) .. controls (-1,1.5) and (1,1.5) .. (0,0);
    
    \draw (1.5,0) node[right] {$\frac{1}{N} \mathrm{Tr} (M)$};
\end{tikzpicture}
\end{minipage} \hfill 
    \begin{minipage}[c]{.46\linewidth}
    \centering
    \begin{tikzpicture}[scale=0.5]
    \filldraw[gray] (0,0) circle (5pt);
    \filldraw[gray] (2,0) circle (5pt);
    \draw (0,0) node[below] {$M_1$};
    \draw (2,0) node[below] {$M_2$};
    \draw (0,0) -- (2,0);
    \draw (-1,0) -- (0,0);
    \draw (2,0) -- (3,0);
    
    \draw (5,0) node[right] {$M_1 \times M_2$};
\end{tikzpicture}
\end{minipage} \hfill 
\centering
    \begin{tikzpicture}
        \draw [dashed] (0,0) -- (10,0);
    \end{tikzpicture} \hfill
\begin{minipage}[c]{.46\linewidth}
    \centering
    \begin{tikzpicture}[scale=0.6]

    \filldraw[gray] (3,0) circle (5pt);
    \draw (3,0) .. controls (4.618,1.176) and (4.618,-1.176) .. (3,0);
    \draw (3,0) .. controls (4.618,1.176) and (2.382,1.902) .. (3,0);
    \draw (3,0) .. controls (4.618,-1.176) and (2.382,-1.902) .. (3,0);
    \draw (3,0) .. controls (2.382,1.902) and (1,0) .. (3,0);
    \draw (3,0) .. controls (2.382,-1.902) and (1,0) .. (3,0);

    \draw (6,0) node {$\BB_p^{\mathrm{id}}(T)$};

%    \filldraw[gray] (10,0) circle (3pt);
%    \filldraw[gray] (14,0) circle (3pt);
%    \draw (10,0) .. controls (10+1.33,2) and (10+2.67,2) .. (14,0);
%    \draw (10,0) .. controls (10+1.33,1) and (10+2.67,1) .. (14,0);
%    \draw (10,0) -- (14,0) ;
%    \draw (10,0) .. controls (10+1.33,-2) and (10+2.67,-2) .. (14,0);
%    \draw (10,0) .. controls (10+1.33,-1) and (10+2.67,-1) .. (14,0);
\end{tikzpicture}
    \end{minipage} \hfill 
\begin{minipage}[c]{.46\linewidth}
    \centering
    \begin{tikzpicture}[scale=0.4]
    \filldraw[gray] (-2,0) circle (5pt);
    \filldraw[gray] (2,0) circle (5pt);
    \draw (-2,0) node[below] {$T$};
    \draw (2,0) node[below] {$T$};
    \draw (-2,0) -- (2,0);
    \draw (-2,0) .. controls (-0.6,1) and (0.6,1) .. (2,0);
    \draw (-2,0) .. controls (-0.6,-1) and (0.6,-1) .. (2,0);
    \draw (-2,0) .. controls (-0.6,2) and (0.6,2) .. (2,0);
    \draw (-2,0) .. controls (-0.6,-2) and (0.6,-2) .. (2,0);
    
    \draw (4,0) node[right] {$\mathfrak{f}_p^{\mathrm{id}}(T)$};
\end{tikzpicture}
    \end{minipage} \hfill 
\centering
    \begin{tikzpicture}
        \draw [dashed] (0,0) -- (10,0);
    \end{tikzpicture} \hfill
\begin{minipage}[c]{.46\linewidth}
    \centering
    \begin{tikzpicture}[scale=0.3]
    \filldraw[gray] (0,0) circle (5pt);
    \filldraw[gray] (1.5,1.5) circle (5pt);
    \filldraw[gray] (1.5,-1.5) circle (5pt);
    \filldraw[gray] (-1.5,1.5) circle (5pt);
    \filldraw[gray] (-1.5,-1.5) circle (5pt);
    \draw (0,0) node[above] {$T$};
    \draw (1.5,1.5) node[above] {$U$};
    \draw (1.5,-1.5) node[below] {$U$} ;
    \draw (-1.5,1.5) node[above] {$U$} ;
    \draw (-1.5,-1.5) node[below] {$U$} ;
    
    \draw (0,0) -- (3,3) ;
    \draw (0,0) -- (3,-3) ;
    \draw (0,0) -- (-3,3) ;
    \draw (0,0) -- (-3,-3) ;

    \draw (6,0) node {$T\cdot U^p$};
\end{tikzpicture}
    \end{minipage}
\begin{minipage}[c]{.46\linewidth}
    \centering
    \begin{tikzpicture}[scale=0.5]
    \filldraw[gray] (1,0) circle (5pt);
    \filldraw[gray] (2,1) circle (5pt);
    \filldraw[gray] (3,0) circle (5pt);
    \filldraw[gray] (2,-1) circle (5pt);
    \draw (1,0) node[below] {$T$};
    \draw (2,1) node[right] {$u$};
    \draw (3,0) node[right] {$v$};
    \draw (2,-1) node[right] {$w$};
    \draw (-1,0) -- (1,0);
    \draw (1,0) -- (2,1);
    \draw (1,0) -- (3,0);
    \draw (1,0) -- (2,-1);
    
    \draw (5,0) node[right] {$T.(u,v,w)$};
\end{tikzpicture}
\end{minipage} \hfill
    \caption{Some tensor maps.}
    \label{fig:map1} \hfill
\end{figure}
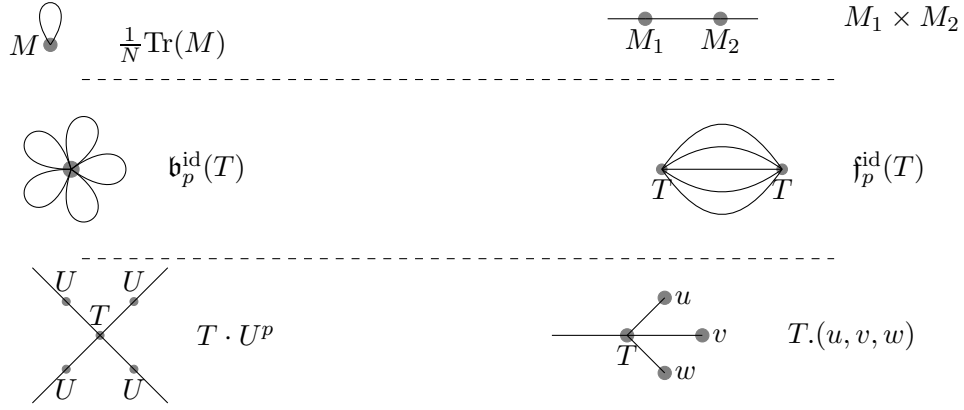

For a combinatorial map $\BB=(\pi,\alpha)$ we denote $\# \BB$ the number of vertices of $\BB$ (number of cycles of $\pi$) and $\gamma(\BB)$ the number of connected components if $\BB$ is a trace map, $\gamma(\BB)=0$ otherwise. Furthermore, for a given $p$ we say that a map is {\em $p$-regular} if $\pi$ has only cycles of length $p$ (all vertices are $p$-valent).

\paragraph{Tensor maps.} For $N \geq 1$, we set $\cE^N_p := (\mathbb{R}^N)^{\otimes p}$, that is $\cE^N_1$ are real vectors, $\cE^N_2$ are real matrices, and, for $p \geq 3$, $\cE^N_p$ are real tensors with $p$ legs of dimension $N$. For $T \in \cE^N_p$, we write $T = (T_{i})_{i \in \INT{N}^p} \in \cE^N_p$. The action of $\cM$ on $\cE^N = \sqcup \cE^N_p$ is defined for $\BB \in\cM_q$, with $\partial = (e_1,\ldots,e_q) \subset \vec E(\BB)$ being its boundary, as
\begin{equation}\label{eq:tracem}
\BB ( (T_v)_{ v\in V(\BB)} )_{i_{\partial}} = \frac{1}{N^{\gamma(\BB)}} \sum_{i \in \INT{N}^{E(\BB)}} \prod_{v \in V(\BB)} (T_v)_{i_{\partial v}}.
\end{equation}
Note that we normalize by the number of connected components for the trace maps ($q=0$), and we do not normalize otherwise ($q\geq 1$). 
\par For $\BB \in \cM_0$, the application $\BB : (T_v)_{v \in V} \to \dR$ is multi-linear and importantly, it is orthogonal invariant in the following sense. If $T \in \cE^N_p$ and $U \in \cE_2^N$ is an orthogonal matrix, define $T\cdot U^p \in \cE^N_p$ as the contraction of each leg of $T$ by $U$, that is for $j \in \INT{N}^p$
\begin{equation}\label{eq:defTU}
    (T\cdot U^p)_j = \sum_{i \in \INT{N}^p} T_i \prod_{k=1}^p U_{j_k i_k}.
\end{equation}
In other words, we have $T\cdot U^p = \BB ( (T,U,\ldots ,U) ) $ where $\BB$ is an elongated star map, with $T$ in the middle and $U$ on each branch, the second neighboring edge of $U$ being attached to $T$. If $M \in \cE^N_2$, then $M \cdot U^2 = U M U^{\intercal}$. Then, it is straightforward to check that for any orthogonal matrix $U$ and any trace maps,
$$
\BB ( (T_v \cdot U^{p_v} )_{ v\in V} ) = \BB ( (T_v)_{ v\in V} ),
$$
where $p_v$ is the degree of degree $v$ (and the order of $T_v$). The trace maps $\BB$ form a basis of orthogonal invariant multi-linear application.  
They are the natural generalization of trace for matrices so we call them the {\em trace invariants}. We refer to \cite{zbMATH06638014,kunisky2024tensorcumulantsstatisticalinference} for an introduction on these trace invariants (for tensors of even order, it is also possible to define maps which are unitary invariant). We call the {\em distribution} of a collection of tensors the collection of trace invariants for $\BB$ connected and adapted to the collection.

\subsection{Non-crossing poset} 
\par Let $m$ be an even integer and $\pi \in S_m$. We consider $\cM_{\pi} \subset \cM_0$ the set of maps $\BB=(\pi,\alpha)$ for some $\alpha \in S_m$. We construct $\cG_{\pi}$ the graph on $\cM_{\pi}$ where two maps $\BB=(\pi,\alpha)$ and $\BB'=(\pi,\alpha')$ are connected by an edge if $\alpha \alpha'$ is a product of transpositions. We say that $\BB$ and $\BB'$ differ by a switch. Moreover, we say that $\BB' < \BB$ if they are connected by an edge and $\gamma(\BB')=\gamma(\BB)+1$. We extend the relation $<$ by transitivity giving to $\cM_{\pi}$ the structure of a partially ordered set. We call it by analogy the non-crossing poset as we retrieve the usual one when all vertices of the maps considered have valence $2$. Also, we say that $\BB$ is minimal if there is no $\BB'<\BB$. When all the degrees are even, there is a unique minimal map smaller than a given map as the graph is Eulerian, but it is no more the case when odd degrees appear. That is why we were not able to prove the characterization of freeness by the free cumulants in the odd case. We may probably define the free cumulants of $\BB'\leq \BB$ with a multiplicity being the number of minimal $\BB''$ such that $\BB''\leq \BB' \leq \BB$.
\begin{figure}[h]
    \centering
    \begin{tikzpicture}[scale=0.5]
    \filldraw[gray] (1.5,0) circle (3pt);
    \filldraw[gray] (0,-2) circle (3pt);
    \filldraw[gray] (3,-2) circle (3pt);
    \draw (1.5,0) .. controls (0.5,1.5) and (2.5,1.5) .. (1.5,0);
    \draw (1.5,0) -- (0,-2);
    \draw (0,-2) -- (3,-2) ;
    \draw (1.5,0) -- (3,-2) ;

    \draw (1.5,1) node {\footnotesize 1};
    \draw (0.75,-1) node {\footnotesize 2};
    \draw (2.25,-1) node {\footnotesize 3};
    \draw (1.5,-2) node {\footnotesize 4};

    \draw (1.5,-3.5) node[right] {\footnotesize 2,3};
    \draw (-1.5,-3.5) node[left] {\footnotesize 3,4};
    \draw (4.5,-3.5) node[right] {\footnotesize 2,4};
    
    \draw [very thick] (1.5,-3) -- (1.5,-4);
    \draw [very thick] (-0.5,-3) -- (-2.5,-4);
    \draw [very thick] (3.5,-3) -- (5.5,-4);

    \filldraw[gray] (-4.5,-5.5) circle (3pt);
    \filldraw[gray] (-3,-7.5) circle (3pt);
    \filldraw[gray] (-6,-7.5) circle (3pt);
    \draw (-4.5,-5.5) .. controls (-5.5,-4) and (-3.5,-4) .. (-4.5,-5.5);
    \draw (-4.5,-5.5) -- (-6,-7.5);
    \draw (-4.5,-5.5) .. controls (-4.5,-6.5) and (-5,-7.5) .. (-6,-7.5) ;
    \draw (-3,-7.5) .. controls (-4,-6) and (-2,-6) .. (-3,-7.5);

    \filldraw[gray] (1.5,-5.5) circle (3pt);
    \filldraw[gray] (0,-7.5) circle (3pt);
    \filldraw[gray] (3,-7.5) circle (3pt);
    \draw (1.5,-5.5) .. controls (0.5,-4) and (2.5,-4) .. (1.5,-5.5);
    \draw (1.5,-5.5) .. controls (0.5,-7) and (2.5,-7) .. (1.5,-5.5);
    \draw (0,-7.5) -- (3,-7.5);
    \draw (0,-7.5) .. controls (1,-6.8) and (2,-6.8) .. (3,-7.5);

    \filldraw[gray] (7.5,-5.5) circle (3pt);
    \filldraw[gray] (9,-7.5) circle (3pt);
    \filldraw[gray] (6,-7.5) circle (3pt);
    \draw (7.5,-5.5) .. controls (6.5,-4) and (8.5,-4) .. (7.5,-5.5);
    \draw (7.5,-5.5) -- (9,-7.5);
    \draw (7.5,-5.5) .. controls (7.5,-6.5) and (8,-7.5) .. (9,-7.5) ;
    \draw (6,-7.5) .. controls (5,-6) and (7,-6) .. (6,-7.5);

    \draw [very thick] (-2.5,-8.5) -- (-0.5,-9.5);
    \draw [very thick] (5.5,-8.5) -- (3.5,-9.5);
    \draw [very thick] (1.5,-8.5) -- (1.5,-9.5);

    \filldraw[gray] (1.5,-11) circle (3pt);
    \filldraw[gray] (0,-13) circle (3pt);
    \filldraw[gray] (3,-13) circle (3pt);
    \draw (1.5,-11) .. controls (0.5,-9.5) and (2.5,-9.5) .. (1.5,-11);
    \draw (1.5,-11) .. controls (0.5,-12.5) and (2.5,-12.5) .. (1.5,-11);
    \draw (0,-13) .. controls (-1,-11.5) and (1,-11.5) .. (0,-13);
    \draw (3,-13) .. controls (2,-11.5) and (4,-11.5) .. (3,-13);

    \draw[->] [dashed] (-9,-13) -- (-9,2);
    \draw (-9.5,-5) node[left, scale=1, rotate=90] {$\leq$} ;
    \draw (12.5,0) node[right] {$\mathcal{G}_{\pi}$} ;
    
\end{tikzpicture}
    \caption{Poset $\cP_{\pi}$}
    \label{fig:poset}
\end{figure}
\par There is an associated notion of freeness defined in \cite{bonnin2024freenesstensors}. Importantly, if two tensors $T_1, T_2$ of possibly different orders are freely independent then the distribution of $(T_1,T_2)$ is characterized by the individual distributions of $T_1$ and $T_2$. In particular, we mention that we proved the asymptotic freeness for unitarily invariant families, and in particular Wigner tensors.

\subsection{Moments of a tensor}
If $T$ is tensor of order $p$ an $\BB$ a $p$-regular trace map, we may define the trace invariant
\begin{equation}
\BB(T,\ldots,T)  =  \frac{1}{N^{\gamma(\BB)}} \sum_{i \in \INT{N}^{E}} \prod_{v \in V} T_{i_{\partial v}}.
\end{equation}
We denote $\BB(T)$ instead of $\BB(T,\cdots,T)$ for ease of notations, and the distribution of $T$ is then the collection of the $\BB(T)$ for all $p$-regular trace map.
\par We remind thet $\cB_n$ is the set of connected rooted $p$-regular combinatorial maps in $\cM_0$ with $n$ (unlabeled) vertices.
\begin{definition}[Moments of a tensor]
    For $T$ a tensor of order $p$, we call 
    $$ m_n (T) := \sum_{\BB \in \cB_n} \BB(T) $$
    the $n$-th moment of $T$. By convention we set $m_0(T)=1$. 
\end{definition}
Finally, we define the generating function of these moments, for $z \in \dC$, 
$$ M_{T}(z) := \sum_{n\geq 0} m_n(T) z^n,$$
as a formal power series in $\dC\INT{z}$.

\subsection{Free cumulants}
Given our poset we can naturally define the associated free cumulants. For $\BB \in \cM_0$, the free cumulant of $\BB$ is the unique multilinear application such that
$$ \BB((T_v)_v) = \sum_{\BB'\leq \BB} \kappa_{\BB'}((T_v)_v).$$
It is well defined by Moebius inversion. We refer to the precedent paper for all the properties of this application and their proofs. In particular note that the morphism property still holds, that is it is multiplicative on the connected components.  The crucial point is that we proved that two even families of tensors of possibly different orders are free if and only if the mixed cumulants vanish. 
\par As for the moments, we may then define the free cumulants of a given tensor $T \in \cE_p$.
\begin{definition}[Free cumulants of a tensor]
    For $T$ a tensor of order $p$, we call 
    $$ \kappa_n (T) := \sum_{\BB \in \cB_n} \kappa_{\BB}(T) $$
    the $n$-th free cumulant of $T$. By convention we set $\kappa_0(T)=1$.
\end{definition}
The generating function of these free cumulants is now defined as the formal power series in $\dC \INT{Z}$,
$$ C_{T}(z) := \sum_{n\geq 0} \kappa_n(T) z^n.$$

\subsection{Moment-cumulant formula}
We fix $p\geq 2$ and $T \in \cE_p$. We are going to prove the Theorem \ref{thm:MC} which gives a key to compute the moments of the sum of two tensors freely independent. That states that we have the functional relation in $\dC \INT{z}$,
    $$ M_T(z)=C(zM_T(z)^{p/2}). $$
This equality holds on the crucial relation \eqref{relation_mc} given in the following Lemma.
\begin{lemma}\label{lem:rel_mc}
For any $n\geq 0$,
    \begin{equation}\label{relation_mc}
    m_n(T) = \sum_{s=1}^n \kappa_{s}(T) \sum_{i_1,\cdots,i_{sp/2} \in \{ 0,\cdots,n-s\} \atop s+i_1+\cdots+i_{sp/2}=n } \prod_{j=1}^{sp/2} m_{i_j}(T).
\end{equation}
\end{lemma}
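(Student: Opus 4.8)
The plan is to expand each trace invariant into free cumulants and then reorganize the resulting sum exactly as one does in the scalar case when proving the classical moment–cumulant recursion by conditioning on the block of the first point. Starting from the definition of $m_n(T)$ and the defining relation of the free cumulants, I would write
$$ m_n(T)=\sum_{\BB\in\cB_n}\BB(T)=\sum_{\BB\in\cB_n}\ \sum_{\BB'\le\BB}\kappa_{\BB'}(T), $$
a sum over all pairs $(\BB,\BB')$ with $\BB\in\cB_n$ connected $p$-regular rooted on $n$ vertices and $\BB'\le\BB$ in the non-crossing poset. For such a pair, let $C_0$ be the connected component of $\BB'$ containing the root; since $\BB'=(\pi,\alpha')$ shares its vertex set with $\BB$, this $C_0$ is a connected $p$-regular map, rooted at the root of $\BB$, on some number $s$ of vertices with $1\le s\le n$, hence $C_0\in\cB_s$. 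Because free cumulants are multiplicative over connected components, $\kappa_{\BB'}(T)=\kappa_{C_0}(T)\cdot\prod_{C}\kappa_{C}(T)$, the product running over the remaining components of $\BB'$, so the root component factors out of the sum.

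\textbf{Combinatorial core.} The heart of the proof is the structural statement that, once $C_0\in\cB_s$ is fixed with its root, the remaining data in a pair $(\BB,\BB')$ whose root component is $C_0$ is equivalent to a decoration of the $sp/2$ edges of $C_0$ — canonically ordered, say by a depth-first traversal of $C_0$ from the root — by an ordered tuple of pairs $(\BB_j\in\cB_{i_j},\ \BB_j'\le\BB_j)$ for $j=1,\dots,sp/2$, with $i_1+\cdots+i_{sp/2}=n-s$, the root of each $\BB_j$ being its attachment point to the $j$-th edge of $C_0$; and that under this correspondence $\kappa_{\BB'}(T)=\kappa_{C_0}(T)\prod_{j=1}^{sp/2}\kappa_{\BB_j'}(T)$. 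This is the map analogue of the fact that, in a non-crossing partition, the blocks other than the block of $1$ nest cleanly into the intervals delimited by that block: here the components of $\BB'$ other than $C_0$ group into $sp/2$ independent "excursions", one per edge of $C_0$ (there are $sp/2$ edges since $C_0$ is $p$-regular on $s$ vertices), each excursion together with its induced poset structure being an honest pair in $\cB_{i_j}\times\{\,\cdot\le\cdot\,\}$. I would establish this by induction on $n$, peeling off one excursion (equivalently, a maximal chain of switches) at a time, and checking both that every $(\BB,\BB')$ decomposes this way and that every decorated $C_0$ reconstructs a unique admissible pair, using the switch description of the non-crossing poset recalled in this section and in \cite{bonnin2024freenesstensors}.

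\textbf{Resummation.} Granting the decomposition, the identity is immediate: summing first over the decorations of each slot and then over $C_0$,
$$ m_n(T)=\sum_{s=1}^{n}\ \Big(\sum_{C_0\in\cB_s}\kappa_{C_0}(T)\Big)\ \sum_{\substack{i_1,\dots,i_{sp/2}\ge0\\ s+i_1+\cdots+i_{sp/2}=n}}\ \prod_{j=1}^{sp/2}\Big(\sum_{\BB_j\in\cB_{i_j}}\ \sum_{\BB_j'\le\BB_j}\kappa_{\BB_j'}(T)\Big), $$
and then using $\sum_{C_0\in\cB_s}\kappa_{C_0}(T)=\kappa_s(T)$ and $\sum_{\BB_j\in\cB_{i_j}}\sum_{\BB_j'\le\BB_j}\kappa_{\BB_j'}(T)=\sum_{\BB_j\in\cB_{i_j}}\BB_j(T)=m_{i_j}(T)$ turns the right-hand side into $\sum_{s=1}^n\kappa_s(T)\sum_{s+i_1+\cdots+i_{sp/2}=n}\prod_{j=1}^{sp/2}m_{i_j}(T)$, which is \eqref{relation_mc}. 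The term $s=n$ forces all $i_j=0$ and contributes $\kappa_n(T)$. (When $p$ is odd, $\cB_s=\emptyset$ for odd $s$, so $\kappa_s(T)=0$ there while $sp/2$ is an integer wherever it occurs; the argument is uniform in $p\ge2$.) Reading \eqref{relation_mc} as the coefficient of $z^n$ then gives $M_T(z)=C_T\big(zM_T(z)^{p/2}\big)$, i.e.\ Theorem \ref{thm:MC}.

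\textbf{Main obstacle.} The one nontrivial point is the structural bijection in the second paragraph: making precise, at the level of the permutation encoding $(\pi,\alpha)$, the excursion decomposition of $(\BB,\BB')$ relative to the root component, and verifying that the $sp/2$ excursions are genuinely independent and that each ranges over all of $\cB_{i_j}\times\{\,\cdot\le\cdot\,\}$ (in particular that $C_0$ itself ranges over all of $\cB_s$). This is the analogue of the elementary but fiddly interval bookkeeping in the scalar proof, now complicated by half-edges and by the fact that $\BB$ need not be minimal above $C_0$; I expect an induction on $n$ removing one excursion at a time to settle it.
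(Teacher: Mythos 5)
Your proposal is correct and follows essentially the same approach as the paper: expand each $\BB(T)$ into cumulants, factor out the root component $\BB_0\in\cB_s$ by multiplicativity, and resum the remaining components of $\BB'$ grouped into $sp/2$ slots attached to the edges of $\BB_0$, each slot contributing a factor $m_{i_j}(T)$. The paper is equally terse at the step you flag as the main obstacle — it simply asserts, via the switch description of the poset, that $\BB'$ decomposes into $\BB_0$ and $\BB'_1,\ldots,\BB'_{sp/2}$, without carrying out the bijective excursion bookkeeping you sketch.
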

\begin{proof}[Proof of Lemma \ref{lem:rel_mc}]
For a given $n\geq 1$, consider a map $\BB \in \cB_n$ and a map $\BB'$ smaller than $\BB$. Write $\BB_0$ the connected component of $\BB'$ containing the vertex $1$ and denote $s$ its size. Hence $\BB_0 \in \cB_s$ and this map has $sp/2$ edges. The other connected components of $\BB_0$ are denoted $\BB_1,\ldots,\BB_r$. By definition of the poset, for each $1\leq l\leq r$, there exists an edge of $\BB_0$ such that one edge of $\BB_l$ can be switched with this edge of $\BB_0$ and the new map obtained is still smaller than $\BB$ in the poset. This means that $\BB'$ decomposes into $\BB_0 \in \cB_s$ and $\BB'_1,\ldots,\BB'_{sp/2}$ (these maps are possibly empty and not necessarily connected). Therefore, using the morphism property of $\kappa_{\BB}$, 
$$ \kappa_{\BB'}(T) = \kappa_{\BB_0}(T) \kappa_{\BB'_1}(T) \ldots \kappa_{\BB'_{sp/2}}(T). $$
If we note $i_j$ the number of vertices in $\BB'_j$, we thus have proved that
\begin{align*}
    m_n(T) &= \sum_{\BB \in \cB_n} \sum_{\BB'\leq \BB} \kappa_{\BB'}(T) \\
    &= \sum_{s=1}^n \sum_{\BB_0 \in \cB_s} \kappa_{\BB_0}(T) \sum_{i_1,\cdots,i_{sp/2} \in \{ 0,\cdots,n-s\} \atop s+i_1+\cdots+i_{sp/2}=n } \prod_{j=1}^{sp/2} \sum_{\BB_j \in \cB_{i_j}} \sum_{\BB'_j \leq \BB_j} \kappa_{\BB'_j}(T) \\
    &= \sum_{s=1}^n \kappa_{s}(T) \sum_{i_1,\cdots,i_{sp/2} \in \{ 0,\cdots,n-s\} \atop s+i_1+\cdots+i_{sp/2}=n } \prod_{j=1}^{sp/2} m_{i_j}(T).
\end{align*}    
The Lemma is proved.
\end{proof}
\begin{proof}[Proof of Theorem \ref{thm:MC}]
    We inject \eqref{relation_mc} in the expression of $M_T(z)$,
\begin{align*}
    M_T(z) &= 1 + \sum_{n=1}^{\infty}  \sum_{s=1}^n \kappa_{s}(T) \sum_{i_1,\cdots,i_{sp/2} \in \{ 0,\cdots,n-s\} \atop s+i_1+\cdots+i_{sp/2}=n } \prod_{j=1}^{sp/2} m_{i_j}(T) z^n \\
    &= \sum_{s=0}^{\infty} \kappa_{s}(T) z^s \sum_{i_1,\cdots,i_{sp/2}} \prod_{j=1}^{sp/2} (m_{i_j}(T) z^{i_j}) \\
    &= \sum_{s=0}^{\infty} \kappa_{s}(T) z^s M_T(s)^{sp/2}.
\end{align*}
This gives the result.
\end{proof}

%%%%%%%%%%%%%%%%%%%%%%%%%%%%%%%%%%%%%%%%%%%%%%%%%%%%%%%%

\section{Higher order laws}\label{sec:laws}

\subsection{Wigner and Wishart tensor}\label{sec:wi}

\paragraph{Symmetric tensors and symmetrization.} For a given $p \geq 1$, the symmetric group $\mathrm{S}_p$ acts on $\INT{N}^p$ by permutation of indices: for $i \in \INT{N}^p$ and $\sigma \in \mathrm{S}_p$, $i_\sigma =  (i_{\sigma(1)},\ldots,i_{\sigma(p)})$. For $i,j \in \INT{N}^p$, we say $i \stackrel{p}{\sim} j$ if $i = j_{\sigma} $ for some $\sigma \in \mathrm{S}_p$.
We say that a tensor is symmetric $T_i = T_j$ as soon as $i \stackrel{p}{\sim} j$, that is $T^{\sigma}=T$ for all $\sigma$, where $T^{\sigma}_i=T_{i_\sigma}$.

\begin{figure}[h!]
    \centering
    \begin{tikzpicture}[scale=0.5]
    \filldraw[gray] (0,0) circle (5pt);
    \draw (0,0) node[above right] {$T$};
    \draw (0,0) -- (0,2) ;
    \draw (0,0) -- (-1.732,-1) ;
    \draw (0,0) -- (1.732,-1) ;

    \draw (0,1) node {\footnotesize 1};
    \draw (-0.866,-0.5) node {\footnotesize 3};
    \draw (0.866,-0.5) node {\footnotesize 2};

    \draw (5,1) node {\footnotesize $\sigma = (1,2)(3)$};
    \draw (5,0) node {$\longrightarrow$};

    \filldraw[gray] (10,0) circle (5pt);
    \draw (10,0) node[above right] {$T^{\sigma}$};
    \draw (10,0) -- (10,2) ;
    \draw (10,0) -- (8.268,-1) ;
    \draw (10,0) -- (11.732,-1) ;

    \draw (10,1) node {\footnotesize 2};
    \draw (9.132,-0.5) node {\footnotesize 3};
    \draw (10.866,-0.5) node {\footnotesize 1};

    %\draw (9,-2) node {\footnotesize $T^{\sigma}_{i_1,\cdots,i_p} = T_{i_{\sigma(1)},\cdots,i_{\sigma(p)}}$};
\end{tikzpicture}
    \caption{$T$ and $T^{\sigma}$.}
    %\label{fig:map_sym}
\end{figure}
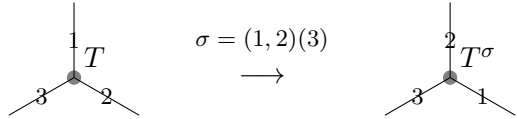

\par If $p_1,p_2$ are two integers, we denote $\cP_{p_1,p_2}$ the set of application $\pi : \INT{p_1+p_2} \mapsto \{1,2 \} \times \INT{p_1+p_2}$ such that $\vert \pi_1^{-1}(1) \vert =p_1, \vert \pi_1^{-1}(2) \vert =p_1$ and $\pi_2$ is injective. We write $\pi^{-1}(1)$ the ordered $p_1$-tuple of elements in $\pi_1^{-1}(1)$ ordered by their image by $\pi$, that is $\pi^{-1}(1)=(x_1,\ldots,x_{p_1})$ such that $\pi(x_j)=(1,i_j)$ where $i_1<i_2<\ldots<i_{p_1}$. We do the same for $\pi^{-1}(2)$. Now if $T_1,T_2$ are tensors of order $p_1,p_2$, we define the symmetrization of $T_1,T_2$ as
$$ T_1 \underline{\otimes} T_2 := \frac{1}{\vert \cP_{p_1,p_2} \vert} \sum_{\pi \in \cP_{p_1,p_2}} (T_1)_{\pi^{-1}(1)} (T_2)_{\pi^{-1}(2)}. $$
In the particular case where $T_1=T_2=T$, we write $T^{\underline{\otimes} 2}$.

\paragraph{Wigner tensors.} We consider $X= (X_i)_{i \in \INT{N}^p} \in \cE^N_p= (\dC^N)^{\otimes p}$ be such that $X_i = X_j$ if $i \stackrel{p}{\sim} j$ and the random variables $(X_i)_{i \INT{N}^p / \stackrel{p}{\sim}}$ are  independent, real,
\begin{equation}\label{eq:varX}
 \dE X_i = 0 \quad \AND \quad \dE X_i^2 = \frac{p}{\cP_i},
\end{equation}
where $\cP_i$ is the number of elements in the equivalence class of $i$. We assume moreover that all the moments are bounded, that is
\begin{enumerate}
    \item [(X1)] For all integers $k \geq 2$, there exists a constant $c(k) > 0$ such that for all integers $N\geq 1$ and $i \in \INT{N}^p$:\begin{equation*}\label{eq:Xkbd} \dE |X_i|^k  \leq c(k). \end{equation*}
\end{enumerate}
Remark that we can equivalently write $\dE X_i^2 = \frac{1}{(p-1)!}\prod_{j=1}^N c_j(i)!,$
where $c_j(i)$ is the number of occurrences of $j$ in $i$. 
We also assume that for each integer $N$, the law of $X_i$ depends only on the equivalence class of $i \in \INT{N}^p$ with respect to the action of $\mathrm{S}_N$. The law of $X_i$ may depend on $N$.
\par The main example is the Gaussian Orthogonal Tensor Ensemble (GOTE) where $X$ is Gaussian. 
We define the normalized Wigner tensor as 
$$
\mathbf{W}_N  := \frac{X}{N^{\frac{p-1}{2}}}.
$$
(Note that for vectors, $p=1$, there is no scaling). The random tensor $\mathbf{W}^N$ is the tensor analog of the real Wigner matrices.
\begin{remark}
    For the convergence results which follow, we could assume that the entries have all variance $\frac{1}{(p-1)!}$ because as in the matrix case the terms of leading order in the proof only make appear the entries outside any diagonal, that is when $i_1,\ldots,i_p$ are two by two distincts and then $\frac{p}{\mathcal{P}_i}=\frac{1}{(p-1)!}$.
\end{remark}

%Remark that when $p$ is even, we can also define Hermitian complex-valued tensors. Indeed, for $p = l/2$ even, we can identify an element of $\cE^N_p = (\dC^N)^{\otimes p}$ as linear map from $\dC^l$ to $\dC^l$. We may thus distinguish inputs and outputs between the legs of an element in $\cE^N_p$. For simplicity, we will however restrict ourselves to the real case in these notes. 

\paragraph{Wishart tensors.} We define another main type of real symmetric random tensors. We begin with the even case which is more natural. Let $p\geq 2$ be an even integer. Let $\{ x_{i,j_1,\ldots,j_{p/2}} : i,j \geq 1 \}$ an array of {\em i.i.d.} real random variables with \begin{equation}\label{eq:varx}
 \dE x_{i,j_1,\ldots,j_{p/2}} = 0 \quad \AND \quad \dE x_{i,j_1,\ldots,j_{p/2}}^2 = \frac{1}{\eta_p},
\end{equation}
where $\eta_p:=[(p/2)!]^{2/p}$. 
%is the number of pairings of $\{1,\ldots,p\}$ if $p$ is even and of $\{1,\ldots,p-1\} \times \{1,\ldots,p-1\}$ if $p$ is odd. 
We assume moreover that all the moments are bounded, that is 
\begin{enumerate}
    \item [(X1')] For all integers $k \geq 2$, there exists a constant $c'(k) > 0$ such that for all integers $N\geq 1$ and $i \in \INT{N}^p$:\begin{equation*}\label{eq:xkbd} \dE |x_{i,j_1,\ldots,j_{p/2}}|^k  \leq c'(k). \end{equation*}
\end{enumerate}
Then write $x_l^N := (x_{l,j_1,\ldots,j_{p/2}},\ldots,x_{l,j_1,\ldots,j_{p/2}})_{1 \leq j_1,\ldots,j_{p/2} \leq N} \in \cE_{p/2}^N$. We will in the following forget the $N$ in this notation for lightness. For $p\geq 2$, the tensor 
$$x_l^{\underline{\otimes} 2} \in (\dR^N)^{\otimes p}$$
is a symmetric tensor introduced in the previous paragraph.

Then we define the Wishart tensor of size $k$ as
$$
\mathcal{W}_N^k := \frac{x_1^{\underline{\otimes} 2} + \ldots+x_k^{\underline{\otimes} 2}}{N^{\frac{p}{2}}}.
$$
Now assume that $k=k_N$ goes to infinity with $N$. More precisely, we are going to assume that $\frac{k_N}{N^{p/2}} \rightarrow t \in (0, \infty)$. The random tensor $\mathcal{W}^N$ is a Wishart matrix in the particular case $p=2$. Another tensor analog of the real Wishart matrices could be a sum of rank one tensors $x_1^{\otimes p}+\ldots+x_k^{\otimes p}$, but its moments does not converge whatever the normalization.

\par If $p$ is odd we choose $p_1,p_2\geq 1$ such that $p=p_1+p_2$, for instance $p_1=\frac{p+1}{2}=p_2+1$. Let $\{ x_{i,j_1,\ldots,j_{p_1}} : i,j \geq 1 \}$ and $\{ y_{i,j_1,\ldots,j_{p_2}} : i,j \geq 1 \}$ two arrays of {\em i.i.d.} real random variables with \begin{equation}
 \dE x_{i,j_1,\ldots,j_{p_1}} = \dE y_{i,j_1,\ldots,j_{p_2}} = 0 \quad \AND \quad \dE x_{i,j_1,\ldots,j_{p_1}}^2 = \frac{1}{[p_1!]^{1/p}}, \dE y_{i,j_1,\ldots,j_{p_2}}^2 = \frac{1}{[p_2!]^{1/p}}.
\end{equation}
We denote $\eta_p:=[p_1!p_2!]^{1/p}$. We then construct the Wishart tensor exactly as before 
$$\mathcal{W}_N^k := \frac{x_1 \underline{\otimes} y_1 + \ldots+x_k \underline{\otimes} y_k}{N^{\frac{p}{2}}}.$$

\begin{remark}
    In both Wigner and Wishart cases, the convergence of the moments that we are going to prove in the following still holds if there are only $p$ finite moments, and not necessarily all of them. We may then weaken assumptions $(X1)$ and $(X1')$. To prove that, it is then necessary to define $\widehat x = x \mathbf{1}_{x \leq C}$ and $\widetilde x = \widehat x -\dE \widehat x$ and to control the difference between $\mathbf{W}$ and $\widetilde{\mathbf{W}}$, or $\mathcal{W}$ and $\widetilde{\mathcal{W}}$. We will not treat this case here and we will assume for simplicity that all the moments are bounded. 
\end{remark}

\subsection{Semicircular in high order}\label{sec:SCconv}
The law playing the role of the Gaussian is this free framework is the semicircular of high order, whose even moments are given by the Fuss-Catalan numbers. In the case $p=2$ this law is the semi-circle law and we retrieve usual results of free probability. We define the Fuss-Catalan numbers as
$$ F_p(k) = \frac{1}{pk+1} \binom{pk+1}{k}, $$
which counts the number of $p$-melonic graphs (that is greater than a disjoint union of melons of order $p$ in our poset) with $k$ vertices, or also the number of non-crossing partitions of $\{1,\ldots,(p-1)k \}$ with blocks of size multiple of $p-1$. A bijection can be find back by the partition on the half edges looking on the tree obtained after merging the pair-associated vertices.
%Let $\mu_p$ the distribution given by $\mu_p(\BB)= \frac{1}{(p-1)!^{\# \BB /2}} \mathbf{1}_{\{ \BB \text{ is melonic}\}}$, that is if $T \sim \mu_p$ then $\BB(T)=\frac{1}{(p-1)!^{\# \BB /2}}$ if $\BB$ is melonic and $\BB(T)=0$ otherwise. Equivalently, $\kappa_{\BB}(T)=\frac{1}{(p-1)!}$ if $\BB$ is a melon and $\kappa_{\BB}(T)=0$ otherwise. Then, for $n\geq 1$,
%    \begin{itemize}
%        \item $m_n(\mu_p)=F_p(n/2)\mathbf{1}_{\{ \text{n even}\}}$ with $F_p(k)$ the $k$-th Fuss Catalan number,
%        \item $\kappa_n(\mu_p)=\mathbf{1}_{\{ \text{n=2}\}}$.
%    \end{itemize}
\begin{definition}[High order semicircular law]
    For $p \in \dN$ we define $\mu_p$ the law having for moments 
    $$ \int \lambda^n d \nu_{p,t}(\lambda) = F_p(n/2) \mathbf{1}_{n \text{ even}}. $$
    This measure is compactly supported on $[-\sqrt{p^p / (p-1)^{p-1}},\sqrt{p^p / (p-1)^{p-1}}]$.
\end{definition}
Then the measure associated to a Wigner tensor (and in particular a Gaussian tensor of the GOTE) converges weakly in probability towards $\mu_p$ when the dimension goes to infinity. It relies on the two following Propositions [Theorem $1$ and Theorem $2$ in \cite{bonnin2024universalitywignerguraulimitrandom}], we will recall some ideas of the proof of the convergence of the moments.
\begin{proposition}\label{prop:momSC}
For all $n \geq 0$ and all $p\geq 2$,
    $$ \mathbb{E} [m_n(\mathcal{W}_N)] = m_n(\mu_{p}) + \mathcal{O}(1/N).$$
\end{proposition}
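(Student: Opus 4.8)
The plan is the moment (or genus-expansion) method. Recalling $\mathbf{W}_N = X/N^{(p-1)/2}$ and that every $\BB \in \cB_n$ is connected with $n$ vertices of degree $p$, hence $np/2$ edges and $\gamma(\BB)=1$, the first step is to expand
\[
  \dE[m_n(\mathbf{W}_N)] \;=\; \sum_{\BB \in \cB_n} \frac{1}{N^{1 + n(p-1)/2}} \sum_{i \in \INT{N}^{E(\BB)}} \dE\!\Big[ \prod_{v \in V(\BB)} X_{i_{\partial v}} \Big].
\]
Next I would use that the entries $(X_j)$ are centered and independent modulo $\stackrel{p}{\sim}$: grouping the vertices of $\BB$ into blocks on which the tuples $i_{\partial v}$ are $\stackrel{p}{\sim}$-equivalent, a product expectation vanishes unless every block has at least two vertices, and by the moment bound $(X1)$ each block contributes a factor bounded uniformly in $N$. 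A size-two block $\{v,v'\}$ is precisely the datum of a melon $\FF_p^\sigma$ ($\sigma \in S_p$) glued onto $\BB$ by identifying the half-edges at $v$ and $v'$; using $(X1)$ again to count the indices freed by larger blocks, I would show that blocks of size $\ge 3$ are suppressed by a power of $N$. In particular, for $n$ odd no admissible block partition exists, so $\dE[m_n(\mathbf{W}_N)] = \mathcal{O}(1/N) = m_n(\mu_p) + \mathcal{O}(1/N)$.

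For $n$ even, this reduces the sum, up to $\mathcal{O}(1/N)$, to a sum over data $(\BB,\ \text{pair partition of }V(\BB),\ \text{compatible gluings})$. For each such datum the number of index assignments $i$ consistent with the identifications is $N^{d}$ for an integer $d$ which, after the normalization $N^{-1-n(p-1)/2}$, leaves a contribution $N^e$ with $e \le 0$, and $e = 0$ exactly for the \emph{melonic} configurations — those obtained from a single melon by iterated melon insertions. The choice of variance $\dE X_i^2 = p/\cP_i$ in \eqref{eq:varX} is what ensures that the residual diagonal contributions are again $\mathcal{O}(1/N)$ (the content of the Remark above). Hence each melonic configuration contributes $1 + \mathcal{O}(1/N)$ while all others contribute $\mathcal{O}(1/N)$ in total, and since the melonic $p$-regular maps on $n$ vertices are counted by the Fuss--Catalan number $F_p(n/2)$ — through the bijection recalled above with non-crossing partitions of $\{1,\dots,(p-1)n/2\}$ into blocks of size a multiple of $p-1$ — summing over $\cB_n$ yields $\dE[m_n(\mathbf{W}_N)] = F_p(n/2)\,\mathbf{1}_{n\text{ even}} + \mathcal{O}(1/N) = m_n(\mu_p) + \mathcal{O}(1/N)$.

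The hard part is the power-counting / melonic-dominance step: showing that $d$ is a genuine topological invariant of the glued object, that it is maximized exactly by the melonic configurations (Gurau's melonic dominance), and that the symmetrization of the tensor — the $S_p$-action on indices and the diagonal coincidences it creates — produces no spurious leading-order term, which is precisely where the normalization \eqref{eq:varX} is used. This is exactly Theorem~1 of \cite{bonnin2024universalitywignerguraulimitrandom} (the Gaussian case being \cite{gurau2020generalizationwignersemicirclelaw}), so I would import that estimate, the remaining steps above being routine bookkeeping.
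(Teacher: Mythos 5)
Your proposal is correct and follows essentially the same route as the paper: expand $\dE[m_n(\mathbf{W}_N)]$ over $p$-regular connected maps and index assignments, group vertices into $\stackrel{p}{\sim}$-equivalence blocks (the paper calls them equivalence classes of vertices carrying the same edge-indices), kill blocks of size one by centering, suppress blocks of size three or more and non-tree index patterns by power counting in $N$, identify the surviving leading configurations as melonic maps counted by $F_p(n/2)$, and absorb the normalization $[(p-1)!]^{n/2}$ against $(\dE X^2)^{n/2}$. Both you and the paper defer the quantitative melonic-dominance estimate (and the control of the diagonal coincidences forced by symmetrization, where the variance normalization $\dE X_i^2 = p/\cP_i$ is used) to \cite{bonnin2024universalitywignerguraulimitrandom}, so there is no material difference between the two arguments.
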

\begin{proposition}
For all $n \geq 0$ and all $p\geq 2$,
    $$\mathrm{Var}[m_n(\mathcal{W}_N)] =\mathcal{O}(1/N^2) .$$
\end{proposition}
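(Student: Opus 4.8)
The plan is to reduce the variance to a finite sum of covariances of individual trace invariants and then to run a second-order version of the moment computation behind Proposition~\ref{prop:momSC}, retaining only the ``connecting'' contributions that survive the subtraction of $\mathbb{E}[m_n(\mathbf{W}_N)]^2$ (here $\mathbf{W}_N$ is the Wigner tensor of Section~\ref{sec:SCconv}). Since $\cB_n$ is finite, $\mathrm{Var}[m_n(\mathbf{W}_N)] = \sum_{\BB_1,\BB_2\in\cB_n}\mathrm{Cov}(\BB_1(\mathbf{W}_N),\BB_2(\mathbf{W}_N))$, so it suffices to bound each covariance by $\mathcal{O}(1/N^2)$. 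First I would write, for a connected $p$-regular map $\BB$ with $n$ vertices, $\BB(\mathbf{W}_N)=N^{-1-(p-1)n/2}\sum_{i\in\INT{N}^{E(\BB)}}\prod_{v\in V(\BB)}X_{i_{\partial v}}$, so that $\BB_1(\mathbf{W}_N)\BB_2(\mathbf{W}_N)$ is the index sum attached to the disjoint union $\BB_1\sqcup\BB_2$ (which has $2n$ vertices and $\gamma=2$), carrying the prefactor $N^{-2-(p-1)n}$. Taking expectations and expanding $\mathbb{E}[\prod_{v\in V(\BB_1)\sqcup V(\BB_2)}X_{i_{\partial v}}]$ according to which entries of $X$ coincide — i.e.\ over partitions $\pi$ of the $2n$ vertices, a block of size $k$ producing a factor $\mathbb{E}[X^{k}]=\mathcal{O}(1)$ by $(X1)$, and only partitions with all blocks of size $\ge 2$ surviving because $\mathbb{E}X_i=0$ — one checks that the subtracted term $\mathbb{E}[\BB_1(\mathbf{W}_N)]\,\mathbb{E}[\BB_2(\mathbf{W}_N)]$ is exactly the sub-sum over partitions that do \emph{not} merge a vertex of $\BB_1$ with one of $\BB_2$. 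Hence the covariance equals the same sum restricted to \emph{connecting} partitions.

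Next I would invoke the power-counting estimate already behind Proposition~\ref{prop:momSC} (see \cite{gurau2020generalizationwignersemicirclelaw,bonnin2024universalitywignerguraulimitrandom}): for a partition $\pi$ of the vertices of a $p$-regular map, the number of indices one may freely sum over — hence the power of $N$ produced — is bounded by an Euler-characteristic-type quantity that is additive over the connected components of the structure obtained after performing the identifications dictated by $\pi$. For $\BB_1\sqcup\BB_2$, a partition keeping the two pieces separated has at least two such components and attains the maximal exponent $(p-1)n+2$, which after the $N^{-2-(p-1)n}$ normalization is $\mathcal{O}(1)$; this is precisely the disconnected part, which is cancelled. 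A connecting partition fuses the two pieces into a single component, and the same inequality forces the exponent to drop by at least $2$ — exactly as, for $p=2$, a closed connected orientable surface has $\chi\le 2$ against $\chi\le 4$ for two components — so its contribution is $\mathcal{O}(N^{(p-1)n}/N^{(p-1)n+2})=\mathcal{O}(1/N^2)$. Blocks of size $\ge 3$ (relevant only in the non-Gaussian case, absent for the GOTE) only lower the exponent further and are harmless; if only $p$ moments are assumed one would first pass to centred truncated variables, as in the remark following the Wishart definition. Summing $\mathcal{O}(1/N^2)$ over the finitely many pairs $(\BB_1,\BB_2)$ gives the claim.

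The hard part will be making the sharp ``drop by $2$'' precise, rather than merely ``drop by $1$'': one must rule out a configuration connecting $\BB_1$ to $\BB_2$ that loses only one unit of free-index exponent. For $p=2$ this is the non-existence of an orientable ribbon graph with $\chi=3$; in the tensor case I expect it to follow from the parity and rigidity of the melonic extremizers already isolated in the proof of the mean asymptotics, so that it is essentially the two-map refinement of the single-map bound established there. Once that combinatorial lemma is in hand the remainder is bookkeeping. The identical scheme, with the Wigner power-counting replaced by the Wishart one of Section~\ref{sec:MPconv}, also yields the variance estimate stated in Theorem~\ref{thm:Marčenko}.
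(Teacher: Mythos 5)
Your overall strategy—reduce $\mathrm{Var}[m_n]$ to covariances of pairs $\BB_1,\BB_2\in\cB_n$, expand over index configurations on $\BB_1\sqcup\BB_2$, observe that the subtracted product of expectations cancels the non-connecting configurations, then power-count—is the same scheme the paper uses in its one actual variance proof, namely Proposition~\ref{prop:var_mp} for Wishart; for the Wigner statement you chose to attack, the paper itself only cites Theorem~2 of \cite{bonnin2024universalitywignerguraulimitrandom}. However, you explicitly leave the core lemma unproven: the ``drop by $2$'' in the free-index exponent for connecting configurations. That is not bookkeeping—it \emph{is} the proposition. Connectedness alone only shaves one free index, giving $\mathcal{O}(1/N)$; the second unit must come from a separate parity or cycle argument, and you only say you ``expect it to follow'' from the rigidity of the melonic extremizers. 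As it stands this is a genuine gap.

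To see what is missing, compare with the paper's Wishart proof of Proposition~\ref{prop:var_mp}. Writing $G=G_1\cup G_2$, one may assume $G_1,G_2$ share a coincident edge and $G$ has no singleton edge (otherwise the covariance term vanishes by centering). Then either the graph of noncoincident edges contains a cycle, so that its vertex count is at most its edge count, which is at most $np$ since all $2np$ edges have coincidence $\ge 2$; or it is a forest, in which case by evenness of $2np$ some edge has coincidence $\ge 3$, and then either that coincidence is $\ge 4$ or a second edge of coincidence $\ge 3$ exists, forcing the noncoincident edge count down to $np-1$ and the vertex count to $np$. Either way this is two units below the disconnected maximum $np+2$, which produces the $\mathcal{O}(1/N^2)$. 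An analogous Wigner-side parity/cycle argument—showing that a connected index configuration with every $X$-entry repeated at least twice supports at least two fewer free indices than two disjoint leading configurations—is precisely what your sketch must contain and does not. A further small inaccuracy: the covariance is not literally ``the same sum restricted to connecting partitions''; each connecting index assignment contributes $\mathbb{E}\bigl[\prod_{\BB_1\sqcup\BB_2}X\bigr]-\mathbb{E}\bigl[\prod_{\BB_1}X\bigr]\mathbb{E}\bigl[\prod_{\BB_2}X\bigr]$, not just the first factor, although both terms are $\mathcal{O}(1)$ by $(X1)$, so the eventual power counting is unaffected.
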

\begin{proof}[Ideas of the proof of Proposition \ref{prop:momSC}]
    In some words, the proof consists in classifying the $p$-regular combinatorial maps on $n$ vertices with given indices $j_1,\ldots,j_{np/2}$ associated to each edge into three categories. We define an equivalence relation on the vertices where $x \sim y$ if $x$ and $y$ belong to $p$ edges with the $p$ same indices $j$ associated. We choose one canonical graph in each isomorphism class (two graphs are isomorphic if one is the other after acting by a permutation on $\INT{N}$ for the indices on each edge), and we classify :
\begin{itemize}
    \item[$\bullet$] Category $1$ : each equivalence class contains exactly two vertices (in particular $n$ is even), and the number of distinct indices is maximal. We denote $\mathcal{M}_n$ the set of these graphs.
    \item[$\bullet$] Category $2$ : at least one equivalence class contain one single vertex.
    \item[$\bullet$] Category $3$ : all the other graphs.
\end{itemize}
As the graph is connected the maximal number of distinct indices for a graph in Category $1$ is $1+\frac{n(p-1)}{2}$ ($p$ for the first pair and $p-1$ for each one of the $\frac{n}{2}-1$ others, which is also the consequence of an Euler formula on the reduced tree as there is no cycle). Moreover, there are 
$$ N(N-1)\ldots (N-\frac{n(p-1)}{2}) [(p-1)!]^{n/2} = N^{1+\frac{n(p-1)}{2}} [(p-1)!]^{n/2} (1 + \mathcal{O}(1/N) $$
graphs in each isomorphism class. We claim that $\mathcal{M}_n$ is exactly the set of the melonic graphs with $n$ vertices with good indices, whose number is given by $F_p(n/2)$. As for a graph in $\mathcal{M}_n$, $\dE X_G = (\dE X_{1,\ldots,p}^2)^{n/2}=\frac{1}{[(p-1)!]^{n/2}}$, then the contribution of these graphs is 
$$ \frac{1}{N} \frac{1}{N^{\frac{n(p-1)}{2}}} \sum_{\mathcal{M}_n} N(N-1)\ldots (N-\frac{n(p-1)}{2}) \times 1 = F_p(n/2) + \mathcal{O}(1/N). $$
Furthermore, by centering of the entries, the graphs in Category $2$ gives a zero contributon. The ones in Category $3$ gives a contribution $\mathcal{O}(1/N)$. That concludes the proof, see \cite{bonnin2024universalitywignerguraulimitrandom} for details.
\end{proof}
To finish with the semicircular we give a proof of Theorem \ref{thm:clt}. Let $(T_i)_{i\geq 1}$ as in our assumptions of the free Central Limit Theorem.
\begin{proof}[Proof of Theorem \ref{thm:clt}]
    Let $s_k:= \frac{1}{\sqrt{k}} \sum_{i=1}^k T_i$. We compute the limit of $\kappa_n(s_k)$ for all $n$ as $k \rightarrow \infty$. By free independence, the mixed cumulants are zero, so we have
    $$ \kappa_n(s_k) = k^{-n/2} \sum_{i=1}^n \sum_{\BB \in \mathcal{B}_n} \kappa_{\BB}(T_i). $$
    Firstly, $\kappa_1(s_k)=m_1(s_k)=0$. Moreover, as $\kappa_2(T_i)=m_2(T_i)- \frac{p^2}{4} m_1(T_i)^2 =1$, then $\kappa_2(s_k)=\frac{k}{k}=1$. Finally, if $n \geq 3$, then 
    $$ \vert \kappa_n(s_k) \vert \leq \frac{1}{k^{\frac{n}{2}-1}} \mathrm{max}_{\BB \in \mathcal{B}_n} C(\BB) $$
    which goes to zero. That gives the result.
\end{proof}

\subsection{Free Poisson in high order}\label{sec:MPconv}

The other main law of the free world is the free Poisson law, also called the Marčenko-Pastur, which is the limiting measure of a Wishart matrix. We are going to show that the measure associated to a Wishart tensor converges in probability towards a higher order free Poisson law. Before describing this law and proving the convergence we may introduce some new notions.
\par For an integer $n$, a partition $\pi$ of $\INT{n}$ is said crossing if there exists $a<x<b<y$ such that $a\overset{\pi}{\sim}b \overset{\pi}{\nsim} x \overset{\pi}{\sim} y$ and it is said non-crossing otherwise. For $q \in \mathbb{Q}, n \in \dN$ we denote $NC_q(n)$ the set of non-crossing partitions of $\INT{qn}$ having blocks of size multiple of $q$ (with $NC_q(n)=\emptyset$ if $qn \notin \dN$). For $q \in \dN$, the cardinal of $NC_q(n)$ is given by the Fuss-Catalan numbers,
$$ \# NC_q(n) = F_{q+1}(n) = \frac{1}{(q+1)n+1} \binom{(q+1)n+1}{n}.$$
Moreover, for $1 \leq b \leq n$ we denote $NC_q^b(n)$ the non-crossing partitions of $\INT{qn}$ having $b$ blocks, whose sizes are multiple of $q$. Then we denote 
$$ F_q^b(n) := \# NC_q^b(n)$$
the cardinal of such partitions. 
\paragraph{Integer case.} For $q\in \dN$, they have been counted, see \cite{EDELMAN1980171}, and their number is given by the Fuss-Narayana numbers, that is 
$$ F_q^b(n) = \frac{1}{b} \binom{n-1}{b-1} \binom{qn}{b-1}, $$
which are a generalization of Narayana numbers. 
\paragraph{Half-integer case.} Now if $q=q'/2$ with $q\notin \dN, q' \in \dN$, then the blocks of a partition having size multiple of $q$ must have a size of length multiple of $q'$ (as $q\dN \cap \dN = q'\dN$). Hence, as $qn \notin \dN$ when $n$ is odd, we can write in the half-integer case
$$ F_q^b(n) = \left\{
    \begin{array}{ll}
        0 & \mbox{if } n \mbox{ odd} \\
        F_{2q}^b(n) = \frac{1}{b} \binom{n-1}{b-1} \binom{2qn}{b-1} & \mbox{if } n \mbox{ even.}
    \end{array}
\right. $$
\begin{definition}[Free Poisson of order $p$]
    For $p \geq 2$ even, $t \in (0,\infty)$ we define $\nu_{p,t}$ the law having for moments 
    $$ \int \lambda^n d \nu_{p,t}(\lambda) = \sum_{b=1}^n F_{p/2}^b(n) t^b. $$
    This measure is compactly supported.
\end{definition}
%When $p$ is even 
When $p$ is even, this law has for moments the Fuss-Narayana polynomials of parameters $\frac{p}{2},t$ and it it then a so called free Bessel law $\pi_{\frac{p}{2},t}$ as described in \cite{Banica_2011} where it is in particular proved that this measure has a compact support. When $p$ is odd, the odd moments are zero and the even moments are given by the Fuss-Narayana numbers of parameters $p,t$. This law is then the image by the map $z \mapsto z^2$ of a free Bessel law of parameters $p,t$. 
\par The existence of this law is then contained in \cite{Banica_2011}.
\paragraph{Case $t=1$.} A relevant point we mention here is the particular case $t=1$. We retrieve in this case the Fuss-Catalan numbers observed for the semicircular, with a different $p$. This means in particular that a semicircular law of order $p+1$ is the image of a free Poisson law of order $2p$ by the map $z \mapsto z^2$. In the matrix case, we retrieve that the semicircular (of order $2=1+1$) is the image by the map $z \mapsto z^2$ of a quarter-circular law (free Poisson - or Marčenko-Pastur as $t=t^{-1}=1$ - of order $2=2\times 1$).
\paragraph{Case $t \rightarrow \infty$.} When the parameter $t$ goes to infinity, the reshaped high order free Poisson law $\frac{1}{\sqrt{t}} (\nu_{p,t}-\Delta_t)$ tends to the high order semicircular. Here $\Delta_t$ is the probability measure with first cumulant equal to $t$ and others equal to $0$. We postpone the proof of this claim to the following Section, see Remark \ref{rk:t_inf}.
\par We are now ready to focus on the main result of this section, that is the weak convergence in probability of the measure associated to $\mathcal{W}^{p,k}_N$ a $p$-order Wishart tensor such that $k_N / N^{p/2} \rightarrow t \in (0,\infty)$ towards $\nu_{p, t}$ as the dimension grows, given by Theorem \ref{thm:Marčenko}. 
The proof of this theorem rely on the moments method. It is indeed sufficient to prove the two following propositions.
\begin{proposition}\label{prop:mom_mp}
    For all $n \geq 0$ and all $p \geq 2$, when $N \rightarrow \infty$, 
    $$ \mathbb{E} [m_n(\mathcal{W}_N)] = \sum_{b=1}^{n} F^b_{p/2}(n) t^b + \mathcal{O}(1/N).$$
\end{proposition}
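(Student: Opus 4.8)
The plan is to compute $\dE[m_n(\mathcal{W}_N)]$ by expanding the definition of the moment as a sum over $p$-regular connected rooted trace maps $\BB$ on $n$ vertices, each decorated by $\mathcal{W}_N = N^{-p/2}\sum_{l=1}^k x_l^{\underline\otimes 2}$, and then tracking which index configurations survive in the limit $N\to\infty$. First I would substitute the sum over the $k$ rank-$(p/2)$-type terms: each vertex $v$ of $\BB$ gets assigned a color $l_v \in \INT{k}$, and since $x_l^{\underline\otimes 2}$ is itself a symmetrization over $\cP_{p/2,p/2}$, each vertex also carries a choice of how its $p$ legs split into two groups of $p/2$ and a matching to the two $x$-indices. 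Writing out $\BB(\mathcal{W}_N)$ this way gives a sum over: an assignment of an index in $\INT{N}$ to each edge of $\BB$, an assignment of a color and a splitting pattern to each vertex, and an internal $(p/2)$-index for each of the two halves at each vertex; by i.i.d.-ness and centering, a term survives in expectation only if the $x$-variables pair up, which forces an identification of (half-vertex, half-index, color) data into matched pairs.

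The core combinatorial step is to show that the surviving configurations, to leading order in $N$, are exactly indexed by pairs $(\sigma, \text{(coloring with $b$ colors)})$ where $\sigma$ realizes a non-crossing structure on the $pn/2$ edge-indices with blocks of size a multiple of $p/2$, having $b$ blocks, and that each such configuration contributes $t^b(1 + \mathcal{O}(1/N))$ after accounting for: the factor $N^{-pn/2}$ from the normalization, the number $\sim N^{(\text{number of free indices})}$ of ways to realize the index constraints, the variance normalization $\eta_p^{-1}$ per $x$-variable, and the factor $k^b/N^{pn/2}\cdot(\text{stuff}) \to t^b$ coming from $k_N/N^{p/2}\to t$ with $b$ free colors. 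This is the standard Wishart-moments bookkeeping: I would mimic the three-category classification recalled in the proof of Proposition \ref{prop:momSC} — Category 1 being the "non-crossing, maximal-index" configurations that match up into non-crossing partitions in $NC_{p/2}^b(n)$, Category 2 giving zero by centering, Category 3 giving $\mathcal{O}(1/N)$ — and verify that an Euler-characteristic / connectedness count shows the number of distinct indices plus colors is maximized exactly on the non-crossing configurations, where it equals $1 + pn/2$ roughly, matching the powers of $N$ so the net contribution is $O(1)$. Summing $t^b$ over all partitions in $NC_{p/2}^b(n)$ and all $b$ from $1$ to $n$ yields $\sum_{b=1}^n F_{p/2}^b(n)\, t^b$, using the Fuss-Narayana count recalled above; in the odd-$p$ case the two arrays $x,y$ of orders $p_1,p_2$ force the half-integer version of $F_{p/2}^b(n)$, giving zero for odd $n$.

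The main obstacle I expect is the precise identification of the leading-order configurations with $NC_{p/2}^b(n)$ — that is, proving that every "non-degenerate, index-maximal" decorated map collapses combinatorially to a non-crossing partition of the $pn/2$ half-edge indices with blocks of size a multiple of $p/2$, and conversely that each such partition is realized by the right number of maps so the Fuss-Narayana count comes out exactly. The subtlety compared to the Wigner case is the interaction between the melonic/multicycle geometry of $p$-regular maps, the internal symmetrization $\underline\otimes$ inside each $x_l^{\underline\otimes 2}$, and the coloring by $\INT{k}$: one must check that the pairing forced by centering, together with connectedness of $\BB$, genuinely produces a planar (non-crossing) structure and that no spurious cycles inflate the index count. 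Handling the odd-$p$ case cleanly (the $p_1 + p_2$ splitting and the odd multicycles) and confirming that the exponent of $N$ never exceeds the critical value — so that Category 3 is truly $\mathcal{O}(1/N)$ — is where the careful Euler-formula argument has to be carried out in detail.
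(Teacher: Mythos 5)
Your proposal follows essentially the same route as the paper: expand $\dE[m_n(\mathcal{W}_N)]$ into a sum over index/color configurations, fix the symmetrization pattern (the paper notes that distinct $\pi\in\cP_{p_1,p_2}$ don't interact by i.i.d.-ness and reduces to the asymmetric tensor, which is your "splitting pattern" bookkeeping), classify configurations into the same three categories (non-crossing/maximal, unpaired-hence-zero, and subleading $\mathcal{O}(1/N)$), and identify the leading class with $NC_{p/2}^b(n)$ via the Fuss--Narayana count. The paper packages this as a $\Delta$-graph construction in the style of Bai--Silverstein, with Lemmas \ref{lem:isomo}--\ref{lem:delta1} making precise the isomorphism-class counting, the vertex-count bound for Category 3, and the bijection with $NC^b_{p/2}(n)$ — these are exactly the details you flagged as the "main obstacle," so your sketch is correct in outline and the paper fills in precisely the steps you anticipated needing care.
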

\begin{proposition}\label{prop:var_mp}
    For all $n \geq 0$ and all $p \geq 2$, when $N \rightarrow \infty$,  
    $$ \mathrm{Var}[m_n(\mathcal{W}_N)] =\mathcal{O}(1/N^2) .$$
\end{proposition}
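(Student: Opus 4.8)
The plan is to reduce the variance bound to an estimate for a single map $\BB$ and then to run the moment method in close parallel with the Wigner case. Since $m_n(\mathcal{W}_N)=\sum_{\BB\in\cB_n}\BB(\mathcal{W}_N)$ with $|\cB_n|$ depending only on $n$ and $p$, the Cauchy--Schwarz inequality for covariances gives $\mathrm{Var}[m_n(\mathcal{W}_N)]\le\big(\sum_{\BB\in\cB_n}\mathrm{Var}[\BB(\mathcal{W}_N)]^{1/2}\big)^2$, so it is enough to show $\mathrm{Var}[\BB(\mathcal{W}_N)]=\mathcal{O}(1/N^2)$ for each fixed $\BB\in\cB_n$ (when $p$ and $n$ are both odd, $\cB_n=\emptyset$ and there is nothing to do). I would in fact establish simultaneously the per-map statement $\mathbb{E}[\BB(\mathcal{W}_N)]=b_{p,t}(\BB)+\mathcal{O}(1/N)$, which is the Wishart case of Lemma~\ref{lem:ind}; summing it over $\cB_n$ and using $\sum_{\BB\in\cB_n}b_{p,t}(\BB)=m_n(\nu_{p,t})=\sum_{b=1}^{n}F^b_{p/2}(n)\,t^b$ from the High Order Free Poisson Theorem then yields Proposition~\ref{prop:mom_mp} as well.

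Next, for the combinatorial expansion, I would fix $\BB\in\cB_n$ and write $\mathcal{W}_N=N^{-p/2}\sum_{l=1}^{k}x_l^{\underline{\otimes}2}$; unfolding each symmetrization replaces every $p$-valent vertex $v$ by two ``half-vertices'' carrying a common label $l_v\in\INT{k}$, together with a datum $\pi_v\in\cP_{p/2,p/2}$ distributing the $p$ half-edges of $v$ between the two halves (for odd $p$ the halves have degrees $p_1,p_2$ and carry $x_{l_v}$ and $y_{l_v}$). This gives
\begin{equation*}
\BB(\mathcal{W}_N)=\frac{1}{N^{1+np/2}\,|\cP_{p/2,p/2}|^{n}}\sum_{l,\pi,i}\ \prod_{v\in V(\BB)}(x_{l_v})_{\bullet}(x_{l_v})_{\bullet},
\end{equation*}
the sum running over labellings $l\colon V(\BB)\to\INT{k}$, symmetrization data $\pi$, and edge-indices $i\colon E(\BB)\to\INT{N}$. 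The expectation factorizes over the distinct label values and vanishes unless every pair (label, $(p/2)$-tuple of half-edge indices) occurs with multiplicity at least $2$; assumption (X1') bounds each surviving term by a constant. A count of powers of $N$ via an Euler formula on the decorated graph then isolates the leading ``pair-partition'' configurations, whose reduced graph is a tree and in which each label class contributes a factor $k\sim t\,N^{p/2}$; this produces $\mathbb{E}[\BB(\mathcal{W}_N)]\to b_{p,t}(\BB)$, the exponent $b$ of $t$ being the number of distinct labels.

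For the variance I would expand $\mathbb{E}[\BB(\mathcal{W}_N)^2]-\mathbb{E}[\BB(\mathcal{W}_N)]^2$ over two independent copies of this unfolded structure. By independence and centering, the difference is supported on the \emph{entangled} configurations, namely those in which at least one pair (label, tuple) is shared between the two copies, so that some label value is common and the corresponding index-tuples coincide. The decisive point is that the effective $N$-exponent of an entangled configuration is at least $2$ below that of the leading disconnected ones. As in the matrix case, this should follow from an Euler-characteristic computation on the graph obtained by gluing the two copies of $\BB$ along the label-class identifications and the index-tuple identifications: the leading disconnected configurations have a reduced graph that is a forest with two components, and forcing a shared (label, tuple) pair fuses these into one connected piece on the same vertex set, introducing an extra independent cycle; because indices come in $(p/2)$-tuples and the label classes have block sizes that are multiples of $p/2$, the number of lost index-loops is exactly $2$ and not $1$. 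Summing the resulting $\mathcal{O}(1/N^2)$ estimates over the boundedly many combinatorial types (and over the polynomially many concrete configurations in each type) gives $\mathrm{Var}[\BB(\mathcal{W}_N)]=\mathcal{O}(1/N^2)$, hence $\mathrm{Var}[m_n(\mathcal{W}_N)]=\mathcal{O}(1/N^2)$.

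I expect the main obstacle to be this last power count. Making rigorous the claim that ``entanglement'' (equivalently, one extra handle) always costs $N^{-2}$ and never merely $N^{-1}$ requires carefully organizing the glued decorated graph together with the symmetrization data, tracking the block-size-$(p/2)$ structure of the label classes, treating the unequal-degree $x$/$y$ split in the odd-$p$ case, and checking that the non-pair (higher-multiplicity) configurations are subleading as well. This is the exact analogue of the suppression-of-higher-genus argument carried out for Wigner tensors in \cite{bonnin2024universalitywignerguraulimitrandom}, from which most of the bookkeeping can be imported; the genuinely new features here are the doubled vertices and the label sums $\sum_{l}$ specific to the Wishart model.
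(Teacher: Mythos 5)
Your strategy is essentially the paper's (and Bai--Silverstein's for Wishart matrices): expand the variance over pairs of $\Delta$-graph configurations $(G_1,G_2)$, discard pairs sharing no $x$-variable by independence and pairs producing an overall single edge by centering, and power-count the remainder. The Cauchy--Schwarz reduction to per-map variances is harmless (the paper sums over $\cB_n$ directly inside one $\Delta$-graph expansion), and your early bookkeeping matches.

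The gap is exactly where you flag it, and the sketch you offer for the $N^{-2}$ saving is not the paper's and is not quite right. You attribute the loss of two index-loops to the $(p/2)$-tuple structure, but for $p=2$ a $(p/2)$-tuple is a single index, so that reasoning alone gives only one lost loop there; and for general $p$ the correct count when a single pair $(l,\text{tuple})$ is shared is $1+p/2$ identifications (one for the label, $p/2$ for the indices), not ``exactly~$2$''. More importantly, the covariance is not supported only on fusions of two Category-1 configurations: terms where $G_1$ or $G_2$ would individually vanish (single $x$-edges rescued by the overlap) survive and must also be bounded. The paper handles all of this at once by looking at the noncoincident edge graph of $G = G_1 \cup G_2$ and proving it has at most $np$ noncoincident vertices, which against the $N^{-(2+np)}$ normalization gives $\mathcal{O}(1/N^2)$ directly. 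The argument splits on whether this reduced graph has a cycle (immediate) or is a tree; in the tree case a parity argument on edge-coincidence multiplicities --- an edge of coincidence $\geq 3$ must exist, since otherwise two coincidence-$1$ cross edges would close a cycle, and then since the total $2np$ is even a second high-coincidence edge must exist --- pushes the number of reduced edges strictly below $np$ and hence the vertex count to at most $np$. That short combinatorial lemma is essentially the whole proof; it is the piece your proposal defers.
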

\begin{remark}\label{rem:MP}
    We want to call attention on a tricky point. The most classical result about convergence of Wishart matrices towards Marčenko-Pastur law treat the case where $N / k_N \rightarrow \tau$ where as mentioned before the limiting Marčenko-Pastur law is equal to the free Poisson law of parameter $t=1/\tau$ dilated by a factor $\tau^{p/2}=\tau$. The usual Marčenko-Pastur law has also moments given by the Narayana polynomials due to the fact that 
    $$ \tau^n \sum_{b=1}^n \frac{1}{b} \binom{n-1}{b-1} \binom{n}{b-1} \frac{1}{\tau^b} = \sum_{r=0}^{n-1} \frac{1}{n-r} \binom{n-1}{r} \binom{n}{r+1} \tau^r = \sum_{r=0}^{n-1} \frac{1}{r+1} \binom{n-1}{r} \binom{n}{r} \tau^r. $$
    This is no more the case when $p>2$.
\end{remark}

\subsubsection{Proof of Propostion \ref{prop:mom_mp}}
The ideas of this proof follow the ones of Bai and Silverstein in [\cite{BaiSilv}, Section 3] to prove classical convergence towards the Marčenko-Pastur law, adapted in our setting. Note first that the terms due to different choices of $\pi$ in the symmetrization do not interact as the $x_{i,j}$ are chosen {\em i.i.d}. As the computation will not depend on the pairing the normalization by $\cP_{p_1,p_2}$ will disappear with the sum on $\cP_{p_1,p_2}$. Hence we fix the canonical $\pi$ ($p_1$ first indices and $p_2$ last ones) in the rest of the proof and study the asymmetric tensor $(x_1 \otimes y_1 + \ldots+x_k \otimes y_k)/N^{\frac{p}{2}} $.
\par The proof can start. We begin by defining a class of graphs and proving some lemmas about their combinatorics. 
\paragraph{$\Delta$-graphs.} Let $i_1,\ldots i_n$ be $n$ integers (not necessarily distinct) in $\INT{k}$, and $j_1,\ldots,j_{np}$ be $np$ integers (not necessarily distinct) in $\INT{N}$. Denote $b$ the number of distinct elements in $\{ i_1,\ldots i_n \}$ and plot $i_1,\ldots i_n$ on $b$ parallel lines $I_1,\ldots,I_b$ ordered by the first element seen in $i_1,\ldots i_n$. Each line $I_u$ has one vertex and we say that it is a vertex of type $I$. Then, plot $b$ parallel lines $J_1,\ldots,J_b$ such that $J_u$ is between $I_u$ and $I_{u+1}$. Now, recursively for $1\leq v\leq n$, if $i_v$ is on the line $I_u$, then plot the not already plotted $j_{(v-1)p +1},\ldots,j_{vp}$ on $I_u$ and draw $p$ edges from $i_v$ to $j_{(v-1)p +1},\ldots,j_{vp}$ (that can be multiple edges if these $j$ are not distinct). Denote $1+r$ the number of distinct elements in $\{ j_1,\ldots,j_{np} \}$, that is the number of vertices of type $J$. Such a graph always has $kp$ edges. A $\Delta$-graph is a graph constructed like that, with the two additional conditions :
\begin{enumerate}
    \item each vertex of type $J$ has valence multiple of $2$ (belongs to $2m$ edges),
    \item the graph is connected.
\end{enumerate}
An example is given in Figure \ref{fig:delta-g}. If $p$ is even, for $1\leq v \leq n$, we call the $(p/2)$-tuples of edges between $i_v$ and $(j_{(v-1)p +1},\ldots,j_{(v-1)p + \frac{p}{2}})$, respectively $(j_{(v-1)p + \frac{p}{2} +1},\ldots,j_{vp})$, the two {\em $v$-multiedges} denoted $e_v(1)$ and $e_v(2)$. We say that two multiedges $e_u(i)$ and $e_v(i')$ are {\em paired} if one can pair them into $p/2$ coincident edges, that is $i_u=i_v$ and there exists $\sigma \in S_{p/2}$ such that for all $1\leq r \leq \frac{p}{2}, j_{(u-1)p + (i-1)\frac{p}{2} + r} = j_{(v-1)p + (i'-1)\frac{p}{2} + \sigma(r)}$. When $p$ is odd, we do the same construction with $e_v(1)$ the $p_1$ first edges and $e_v(2)$ the $p_2$ last ones.  In the even case, it can be $u=v$ and $i=1,i'=2$, but not in the odd case.
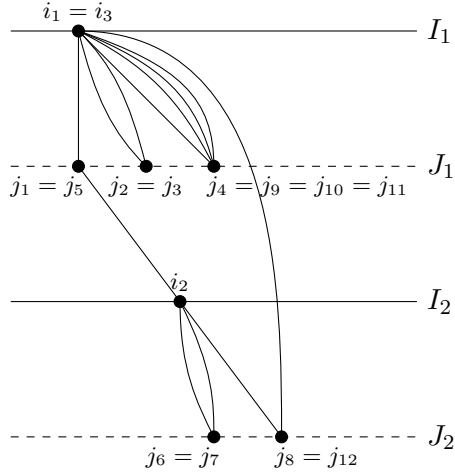
\begin{figure}[h!]
    \centering
    \begin{tikzpicture}[scale=0.45]

    \draw (-2,0) -- (10,0);
    \draw[dashed] (-2,-4) -- (10,-4);
    \draw (-2,-8) -- (10,-8);
    \draw[dashed] (-2,-12) -- (10,-12);

    \draw (10,0) node[right] {$I_1$};
    \draw (10,-4) node[right] {$J_1$};
    \draw (10,-8) node[right] {$I_2$};
    \draw (10,-12) node[right] {$J_2$};

    \filldraw[black] (0,0) circle (5pt);
    \draw (0,0) node[above] {\footnotesize $i_1=i_3$};
    \filldraw[black] (3,-8) circle (5pt);
    \draw (3,-8) node[above] {\footnotesize $i_2$};
    \filldraw[black] (0,-4) circle (5pt);
    \draw (0.5,-4) node[below left] {\footnotesize $j_1=j_5$};
    \filldraw[black] (2,-4) circle (5pt);
    \draw (2,-4) node[below] {\footnotesize $j_2=j_3$};
    \filldraw[black] (4,-4) circle (5pt);
    \draw (3.5,-4) node[below right] {\footnotesize $j_4=j_9=j_{10}=j_{11}$};
    \filldraw[black] (4,-12) circle (5pt);
    \draw (4.5,-12) node[below left] {\footnotesize $j_6=j_7$};
    \filldraw[black] (6,-12) circle (5pt);
    \draw (5.5,-12) node[below right] {\footnotesize $j_8=j_{12}$};

    \draw (0,0) -- (0,-4);
    \draw (0,0) .. controls (0.5,-2) and (1,-3) .. (2,-4);
    \draw (0,0) .. controls (1,-1) and (1.5,-2) .. (2,-4);
    \draw (0,0) -- (4,-4);
    \draw (0,0) .. controls (2,-1) and (3,-2) .. (4,-4);
    \draw (0,0) .. controls (2.5,-1) and (3.5,-2) .. (4,-4);
    \draw (0,0) .. controls (3,-1) and (4,-2) .. (4,-4);

    \draw (0,0) .. controls (6,0) and (6,-8) .. (6,-12);

    \draw (0,-4) -- (3,-8);
    \draw (3,-8) .. controls (4,-10) and (4,-11) .. (4,-12);
    \draw (3,-8) .. controls (3,-10) and (3.5,-11) .. (4,-12);
    \draw (3,-8) -- (6,-12);

\end{tikzpicture}
    \caption{A $\Delta$-graph ($p=4, n=3, b=2, r+1=5$).}
    \label{fig:delta-g}
\end{figure}
\par Two graphs are said isomorphic if there are the same up to a permutation on $(1,\ldots,k)$ and a permutation on $(1,\ldots,N)$. For each isomorphism class, there is only one {\em canonical} graph satisfying $i_1=j_1=1$, $i_{u+1} \leq \mathrm{max}\{ i_1,\ldots,i_u \} +1$ and $j_{u+1} \leq \mathrm{max}\{ i_1,\ldots,i_u \} +1$. The set of canonical $\Delta$-graphs with $b$ vertices of type $I$ and $1+r$ vertices of type $J$ is denoted $\Delta(n,b,r)$. We then classify the $\Delta(n,b,r)$-graphs into $3$ categories :
\begin{itemize}
    \item[$\bullet$] Category 1 : $\Delta(n,b,r)$-graphs in which there is no cycle, each edge coincides with one and only one other edge, and the multiedges are paired (two by two). If we glue the double edges, the resulting graph is a tree with $kp/2$ edges. Hence, $\frac{kp}{2}= b+r$ and we can juste denote these graphs the set of $\Delta_1(n,b)$-graphs as $r$ can be forgotten. 
    \item[$\bullet$] Category 2 : $\Delta(n,b,r)$-graphs that contain at least one non-paired multiedge. We denote them the $\Delta_2(n,b,r)$.
    \item[$\bullet$] Category 3 : $\Delta(n,b,r)$-graphs which do not belong to $\Delta_1(n,b)$ or $\Delta_2(n,b,r)$. We denote them the $\Delta_3(n,b,r)$-graphs.
\end{itemize}
An example of $\Delta_1(n,b)$-graph is given in Figure \ref{fig:delta1-g}. 
\begin{figure}[h]
    \centering
    \begin{tikzpicture}[scale=0.45]

    \draw (-2,0) -- (10,0);
    \draw[dashed] (-2,-4) -- (10,-4);
    \draw (-2,-8) -- (10,-8);
    \draw[dashed] (-2,-12) -- (10,-12);

    \draw (10,0) node[right] {$I_1$};
    \draw (10,-4) node[right] {$J_1$};
    \draw (10,-8) node[right] {$I_2$};
    \draw (10,-12) node[right] {$J_2$};

    \filldraw[black] (0,0) circle (5pt);
    \draw (0,0) node[above] {\footnotesize $i_1=i_3$};
    \filldraw[black] (3,-8) circle (5pt);
    \draw (3,-8) node[above right] {\footnotesize $i_2$};
    \filldraw[black] (0,-4) circle (5pt);
    \draw (0.5,-4) node[below left] {\footnotesize $j_1=j_4$};
    \filldraw[black] (2,-4) circle (5pt);
    \draw (2,-4) node[below] {\footnotesize $j_2=j_3$};
    \draw (2,-5) node [below] {\footnotesize $=j_5=j_8$};
    \filldraw[black] (4,-4) circle (5pt);
    \draw (4.3,-4) node[below] {\footnotesize $j_9=j_{10}$};
    \filldraw[black] (4,-12) circle (5pt);
    \draw (4,-12) node[below] {\footnotesize $j_6=j_7$};
    \filldraw[black] (6,-4) circle (5pt);
    \draw (5.5,-4) node[below right] {\footnotesize $j_{11}=j_{12}$};

    \draw (0,0) .. controls (-0.5,-1.5) and (-0.5,-2.5) .. (0,-4);
    \draw (0,0) .. controls (0.5,-1.5) and (0.5,-2.5) .. (0,-4);
    \draw (0,0) .. controls (0.5,-2) and (1,-3) .. (2,-4);
    \draw (0,0) .. controls (1,-1) and (1.5,-2) .. (2,-4);
    \draw (0,0) -- (4,-4);
    \draw (0,0) .. controls (2,-1) and (3,-2) .. (4,-4);
    \draw (0,0) -- (6,-4);
    \draw (0,0) .. controls (3,-1) and (4.5,-2) .. (6,-4);
    \draw (3,-8) .. controls (4,-10) and (4,-11) .. (4,-12);
    \draw (3,-8) .. controls (3,-10) and (3.5,-11) .. (4,-12);
    \draw (3,-8) .. controls (2,-6) and (2,-5) .. (2,-4);
    \draw (3,-8) .. controls (3,-6) and (2.5,-5) .. (2,-4);

\end{tikzpicture}
    \caption{A $\Delta_1(n,b)$-graph.}
    \label{fig:delta1-g}
\end{figure}
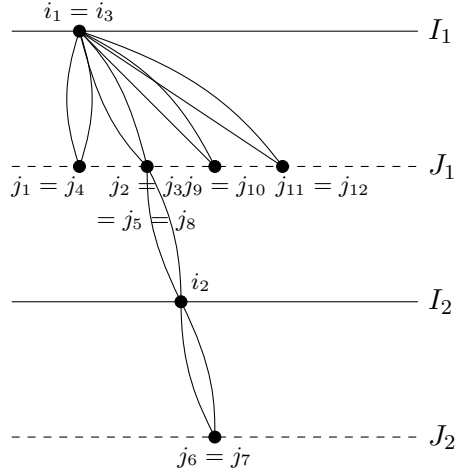
We can now prove some results about their combinatorics.
\begin{lemma}\label{lem:isomo}
    For given $n,b,r$, the number of graphs in the isomorphism class of $G \in \Delta(n,b,r)$ is 
    $$ k(k-1)\ldots(k-b+1) N(N-1)\ldots(N-r) c_{p}^G = k^b N^{r+1} c^G_{p} (1 + \mathcal{O}(N^{-1}).$$
    where $c^G_{p}$ is a constant depending only on the shape of $G$, with in particular if $G \in \Delta_2(n,b)$, 
    $$ c^G_p = \eta_p^{np/2}. $$
\end{lemma}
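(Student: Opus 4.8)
The plan is to factor the data of a $\Delta$-graph lying in the isomorphism class of the canonical graph $G$ into a \emph{labelling} part and an \emph{internal} part: the labellings are counted by falling factorials, and the internal combinatorics is absorbed into a bookkeeping factor that manifestly depends only on the shape of $G$. Concretely, since $G$ is canonical ($i_1=j_1=1$ together with the restricted-growth conditions on the $i$'s and $j$'s), relabelling identifies the isomorphism class of $G$ with the set of pairs $(\iota_I,\iota_J)$, where $\iota_I$ injects the $b$ type-$I$ vertices of $G$ into $\INT{k}$ and $\iota_J$ injects the $1+r$ type-$J$ vertices of $G$ into $\INT{N}$: the actions of $S_k$ on $\INT{k}$ and of $S_N$ on $\INT{N}$ factor through such pairs, and since the vertices are ordered by first appearance there is no nontrivial automorphism to correct for. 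This gives $k(k-1)\cdots(k-b+1)$ choices for $\iota_I$ and $N(N-1)\cdots(N-r)$ for $\iota_J$. Recovering from a labelled shape an actual index configuration $(i_1,\dots,i_n;j_1,\dots)$---equivalently an underlying map $\BB$ together with its identification with $G$---then amounts to ordering the half-edges inside each multiedge of $G$ and matching up the paired multiedges; the number of ways to do this, call it $c^G_p$, is computed purely from the edge-multiplicities and the pairing pattern of $G$, hence depends only on the shape of $G$. Multiplying the three counts shows that the isomorphism class of $G$ has $k(k-1)\cdots(k-b+1)\,N(N-1)\cdots(N-r)\,c^G_p$ elements.

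For the asymptotic form, $k=k_N$ with $k_N/N^{p/2}\to t\in(0,\infty)$, so $k(k-1)\cdots(k-b+1)=k^b\bigl(1+\mathcal{O}(1/k)\bigr)$ and, since $p\geq 2$, $\mathcal{O}(1/k)=\mathcal{O}(N^{-p/2})=\mathcal{O}(N^{-1})$; likewise $N(N-1)\cdots(N-r)=N^{r+1}\bigl(1+\mathcal{O}(N^{-1})\bigr)$. Multiplying and absorbing the constant $c^G_p$ gives $k^bN^{r+1}c^G_p\bigl(1+\mathcal{O}(N^{-1})\bigr)$.

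It remains to evaluate $c^G_p$ for a Category $1$ graph. There every edge coincides with exactly one other and the multiedges are paired two by two, so no multiedge can have a repeated endpoint---otherwise one of its edges would coincide with at least two others---hence each multiedge reaches pairwise distinct type-$J$ vertices. Moreover, once the canonical pairing is fixed the two multiedges of a pair must carry the same index tuple in the same order (the relevant $x_l$, resp.\ $x_l$ and $y_l$, have i.i.d.\ non-symmetric entries), so ordering one multiedge of a pair determines its partner, after which the matching $\alpha$ is forced. The free choices therefore reduce to ordering a single multiedge per pair: $(p/2)!$ orderings for each of the $n$ pairs when $p$ is even, and $p_1!$ for each of the $n/2$ type-$1$ pairs together with $p_2!$ for each of the $n/2$ type-$2$ pairs when $p$ is odd. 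Hence $c^G_p=\bigl[(p/2)!\bigr]^{n}=\eta_p^{np/2}$ in the even case and $c^G_p=\bigl[p_1!\,p_2!\bigr]^{n/2}=\eta_p^{np/2}$ in the odd case.

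The step I expect to be most delicate is precisely the bookkeeping for $c^G_p$: one must check that the internal multiplicity genuinely decouples from the labelling, and, for the Category $1$ evaluation, keep careful track of the $p_1,p_2$ split, of the exponent $2/p$ hidden inside $\eta_p$, and of the fact that each paired pair of multiedges contributes only one free ordering, so as to rule out any spurious over- or under-counting.
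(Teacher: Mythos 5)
Your proof is correct and follows essentially the same route as the paper's. The paper's argument is terse---it states that $c_p^G$ arises from ``the choice of the pairing of the $j$ in each pair of multiedges'' and evaluates it as $[(p/2)!]^n$ in the even case and $[p_1!p_2!]^{n/2}$ in the odd case (where $n$ must be even), both equal to $\eta_p^{np/2}$, then dismisses the falling-factorial part with ``the rest is clear.'' You reproduce this: you count orderings of one multiedge per pair and correctly distinguish the even and odd splits, matching the paper's count exactly. Your added justification for the $k(k-1)\cdots(k-b+1)\,N(N-1)\cdots(N-r)$ factor (canonical labelling kills automorphisms, so the orbit under $S_k\times S_N$ is free) and for $\mathcal O(1/k)=\mathcal O(N^{-1})$ via $p\geq 2$ are exactly the details the paper elides, and are correct. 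Two small remarks: the lemma statement's ``$G\in\Delta_2(n,b)$'' is a typo for $\Delta_1(n,b)$ (Category $1$), which you correctly inferred; and your observation that a multiedge in a $\Delta_1$-graph reaches pairwise distinct type-$J$ vertices, which guarantees the matching $\sigma$ is unique once one side is ordered, is a worthwhile clarification that the paper leaves implicit.
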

\begin{proof}
    The factor $\eta_p^{np/2}$ comes from the choice of the pairing of the $j$ in each pair of multiedges. One must distinguish the even and the odd cases. In the even case, we have $c_p^G = [(p/2)!]^n$. In the odd case, $n$ must be even as we cannot pair multiedges from a same vertex of type I, so $c_p^G= [p_1!p_2!]^{n/2}$. In both cases, that is equal to $\eta_p^{np/2}$. The rest is clear.
\end{proof}
\begin{lemma}\label{lem:delta3}
    The total number of noncoincident vertices of a $\Delta_3(n,b,r)$-graph is at most $np/2$.
\end{lemma}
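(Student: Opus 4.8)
The plan is to deduce the bound from a count of vertices versus distinct edges of $G$, together with the fact that a $\Delta_3$-graph is "not quite" a $\Delta_1$-graph. Write $V = b+1+r$ for the number of (noncoincident) vertices of a $\Delta_3(n,b,r)$-graph $G$; note $G$ is bipartite, its edges joining its $b$ vertices of type $I$ to its $1+r$ vertices of type $J$. Let $\widetilde E$ be the set of edges of $G$ counted \emph{without} multiplicity, so that $\sum_{\varepsilon\in\widetilde E}\mathrm{mult}(\varepsilon)=np$. We must show $V\le np/2$.

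First I would prove $|\widetilde E|\le np/2$. The relation "being paired'' is an equivalence relation on the $2n$ multiedges of $G$: it is reflexive and symmetric by definition, and transitive because two paired multiedges share the same incident type-$I$ vertex and carry the same multiset of edges. Since $G\notin\Delta_2$, no multiedge is left unpaired, so every class of this relation has at least two elements. In the even case there are $2n$ multiedges, hence at most $n$ classes, and all multiedges of a class carry one common multiset of $p/2$ edges, so $|\widetilde E|\le(\#\text{classes})\cdot\tfrac p2\le\tfrac{np}{2}$; in the odd case one runs the same count separately on the $n$ multiedges of each of the two types (carrying $p_1$ and $p_2$ edges respectively), getting at most $n/2$ classes of each type and again $|\widetilde E|\le\tfrac n2(p_1+p_2)=\tfrac{np}{2}$. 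Crucially, equality $|\widetilde E|=np/2$ can occur only if every class has exactly two elements, the $p/2$ (resp.\ $p_1$, $p_2$) edges of each multiedge are pairwise distinct, and the edge-sets of distinct classes are disjoint — equivalently, the $2n$ multiedges are matched two by two into coincident pairs and every edge of $G$ has multiplicity exactly $2$.

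Then I would bring in connectivity: $G$ being connected, $|\widetilde E|\ge V-1=b+r$, with equality only when $\widetilde E$ is a spanning tree, i.e.\ $G$ has no cycle. Suppose now, for contradiction, that $V>np/2$, i.e.\ $b+r\ge np/2$. Then $\tfrac{np}{2}\le b+r\le|\widetilde E|\le\tfrac{np}{2}$, so $b+r=|\widetilde E|=np/2$; hence $G$ has no cycle, and by the equality discussion above its multiedges are matched two by two into coincident pairs with every edge of multiplicity exactly $2$. But a connected, acyclic $\Delta(n,b,r)$-graph whose multiedges are paired two by two and in which each edge coincides with exactly one other edge lies in $\Delta_1$, contradicting $G\in\Delta_3=\Delta(n,b,r)\setminus(\Delta_1\cup\Delta_2)$. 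Therefore $V\le np/2$.

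The step I expect to be the real work is the first one: checking that "being paired'' is a genuine equivalence relation, that $G\notin\Delta_2$ forces all classes to have size at least $2$, and above all extracting the precise equality case of $|\widetilde E|\le np/2$ — all while keeping the even and odd conventions straight (in particular the asymmetry $p_1\ne p_2$ and the fact that in the even case $e_v(1)$ may be paired with $e_v(2)$). Granting this combinatorial input, the conclusion is the short vertices-versus-edges computation of the previous paragraph.
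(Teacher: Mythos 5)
Your proof is correct and follows essentially the same route as the paper's: bound the number of noncoincident edges by $np/2$ using $G\notin\Delta_2$, apply the connected-graph inequality $V\le |\widetilde E|+1$ with equality iff tree, and observe that the equality case would force $G$ into $\Delta_1$. The only notable difference is local: the paper reaches $|\widetilde E|\le np/2$ in one line by noting that $G\notin\Delta_2$ leaves no single edge, so every edge has multiplicity at least $2$ and $np=\sum_{\widetilde E}\mathrm{mult}\ge 2|\widetilde E|$, whereas you take a more roundabout path through the equivalence classes of paired multiedges; this is correct (and your explicit equality-case analysis is a nice supplement, since the paper's final sentence is terse and appears to contain a small off-by-one in the phrase ``exactly $np/2$ noncoincident vertices''), but the direct multiplicity count is shorter.
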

\begin{proof}
    Let $G$ be a graph in $\Delta_3(n,b,r)$. Since $G$ is not in $\Delta_2$, it has no single edge, so the number of noncoincident edges is at most $np/2$. As any $\Delta$-graph is connected, the number of noncoincident vertices is at most the number of noncoincident edges plus one with equality if and only if the noncoincident graph is a tree. But $G$ is not in $\Delta_1$, so if it has exactly $np/2$ noncoincident vertices, then it must contain a cycle. Hence, in all cases the number of noncoincident vertices is at most $np/2$. 
\end{proof}
The last very important result is to count the number of $\Delta_1(n,b)$-graphs.
\begin{lemma}\label{lem:delta1}
    For $n,b$, the number of $\Delta_1(n,b)$-graphs is 
    $$ F^b_{p/2}(n).$$
\end{lemma}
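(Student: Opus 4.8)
The plan is to set up an explicit bijection between $\Delta_1(n,b)$-graphs and the set $NC_{p/2}^b(n)$ of non-crossing partitions of $\INT{(p/2)n}$ into $b$ blocks whose sizes are multiples of $p/2$; the equality of cardinalities then follows from the definition $F^b_{p/2}(n) = \#NC_{p/2}^b(n)$ recalled earlier (and the Fuss--Narayana formula in the integer case, or its half-integer variant). First I would recall the structural features of a $\Delta_1(n,b)$-graph: gluing each edge to its unique coincident partner produces a tree on $b + r$ edges with $b$ vertices of type $I$ and $1+r$ vertices of type $J$, and the $np$ original edges organize into $2n$ multiedges $e_v(1), e_v(2)$ ($1 \le v \le n$), which are paired two by two. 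Since each vertex of type $J$ has even valence and every multiedge consists of $p/2$ parallel edges (in the even case; $p_1$ and $p_2$ in the odd case, forcing $n$ even), the pairing of multiedges is exactly a pairing of the $2n$ symbols $\{e_v(i)\}$ — equivalently a pairing of the half-edges incident to the $I$-vertices.

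Next I would make the bijection precise. Reading the construction of a $\Delta$-graph left to right, the multiedges $e_1(1), e_1(2), e_2(1), e_2(2), \ldots, e_n(1), e_n(2)$ are laid out in a linear order; in a $\Delta_1$-graph each is glued to exactly one partner, giving a fixed-point-free pairing $P$ on these $2n$ objects. I claim the planarity / connectivity conditions defining a $\Delta_1$-graph translate exactly into $P$ being a non-crossing pairing, and that each $J$-vertex of valence $2m$ corresponds to a block obtained by merging $m$ of these pairs, so that contracting the tree structure turns $P$ into a non-crossing partition of the $2n$ multiedge-slots with all blocks of even size. Grouping the $2n$ slots into $n$ consecutive pairs $(e_v(1),e_v(2))$ and relabeling, this is the same datum as a non-crossing partition of $\INT{(p/2)n}$ with $b$ blocks of sizes divisible by $p/2$: the $b$ blocks are precisely the $b$ vertices of type $I$ after the tree is reconstructed (each type-$I$ vertex $i_u$ absorbs all multiedges with value $i_u$, and by the canonical normalization distinct type-$I$ vertices carry distinct values). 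Conversely, from such a non-crossing partition one reconstructs the tree, the placement of $J$-vertices, and hence a unique canonical $\Delta_1(n,b)$-graph, and I would check the two maps are mutually inverse.

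The main obstacle, and the step I would spend the most care on, is verifying that the combinatorial "non-crossing" condition on partitions matches the geometric condition that the glued graph be a \emph{tree} (no cycle) rather than merely connected: a crossing pairing of the multiedges would force a cycle in the contracted graph, and conversely absence of cycles forces non-crossing. This requires tracking how the left-to-right plotting order on the lines $I_u, J_u$ interacts with the pairing — essentially a planarity argument on the half-plane layout — and handling the parity bookkeeping (the block-size-multiple-of-$p/2$ condition, and the forced evenness of $n$ and the role of $p_1, p_2$ in the odd case). Once this identification of "tree $\Leftrightarrow$ non-crossing" is established, counting is immediate: $\#\Delta_1(n,b) = \#NC_{p/2}^b(n) = F^b_{p/2}(n)$ by the Edelman-type formula cited in \cite{EDELMAN1980171}, with the half-integer case handled by the observation already made in the text that $q\dN \cap \dN = q'\dN$.
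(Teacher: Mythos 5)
Your overall strategy---a bijection between $\Delta_1(n,b)$-graphs and non-crossing partitions with $b$ blocks of size divisible by $p/2$, with the ``tree $\Leftrightarrow$ non-crossing'' correspondence as the crux---is the same as the paper's, but you set up the bijection on the wrong ground set, and this is a genuine gap. You build a pairing, and then a partition, of the $2n$ multiedge slots $e_1(1), e_1(2), \ldots, e_n(1), e_n(2)$. Since $e_v(1)$ and $e_v(2)$ always share the $I$-vertex $i_v$, every block absorbs $\{2v-1,2v\}$ for each $v$ it touches, so the partition collapses to a non-crossing partition of $\INT{n}$ with $b$ blocks; these are counted by the Narayana numbers $F_1^b(n)$, with no $p$-dependence at all. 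The subsequent ``grouping and relabeling'' step, meant to upgrade this to a partition of $\INT{np/2}$ with blocks of size multiple of $p/2$, cannot manufacture the missing data: inflating each slot by a factor of $p/2$ only produces the $(p/2)$-fattened partitions, a strict subset of $NC_{p/2}^b(n)$ once $p>2$. You also vacillate between taking blocks to be $J$-vertices and $I$-vertices; only the latter can be correct, since there are $1+r = np/2-b+1 \neq b$ type-$J$ vertices in general.

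The fix, and the paper's argument, is to run the construction on the $np/2$ glued edges rather than on the $2n$ multiedge slots. Glue each pair of coincident edges to obtain a tree with $np/2$ edges, label these edges $1,\ldots,np/2$ by the order in which a depth-first search from the root traverses them, and put two labels in the same block exactly when the corresponding glued edges meet the same $I$-vertex. An $I$-vertex occurring $m_u$ times has degree $m_u\,p/2$ in the glued tree, so block sizes are automatically multiples of $p/2$, there are exactly $b$ blocks, and the DFS labeling of a tree yields a non-crossing partition; reversing the DFS reconstruction recovers the canonical $\Delta_1(n,b)$-graph. Only at the level of the $np/2$ glued edges does the factor $p/2$ enter and the count $F_{p/2}^b(n)$ emerge.
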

\begin{proof}
    This lemma stands on a bijection between $\Delta_1(n,b)$-graphs and non-crossing partitions of $\INT{np/2}$ with $b$ block,s whose sizes are multiple of $\frac{p}{2}$. For a $\Delta_1(n,r)$-graph $G$ we glue the coincident edges, it remains $\frac{np}{2}$ edges which we label by their order of visit in the depth-first search starting from $x_1$. Then we associate to this graph the (non-crossing) partition on $\INT{np/2}$ with $b$ blocks where $j_1,j_2$ belong to the same block if and only if the vertex of type $I$ belonging to the edges labeled $j_1$ and $j_2$ is the same. This bijection gives also easily the way to reconstruct a $\Delta_1(n,r)$-graph from a partition in $\mathrm{NC}^b_{p/2}(n)$.
\end{proof}
We are now ready to prove the convergence of the moments.
\begin{proof}[Proof of Proposition \ref{prop:mom_mp}.]
    A first calculus gives :
    \begin{align*}
        \mathbb{E} [m_n(\mathcal{W}_N)] &= \frac{1}{N} \frac{1}{k^n} \sum_{\BB \in \mathcal{B}_n} \mathbb{E}\BB(x_1^{\underline{\otimes} 2} + \ldots+ x_k^{\underline{\otimes} 2}) \\
        &= \frac{1}{N} \frac{1}{N^{np/2}} \sum_{\BB=(V,E) \in \mathcal{B}_n} \sum_{1 \leq i_1,\ldots,i_n \leq k} \sum_{1 \leq j_1,\ldots,j_{np/2} \leq N} \prod_{a=1}^n \mathbb{E}(x_{i_a})_{j_{(\delta v_a)_1}} \ldots (x_{i_a})_{j_{(\delta v_a)_p}}. 
    \end{align*}
Two terms are equal if the corresponding graphs are isomorphic. Therefore by Lemma \ref{lem:isomo},
$$  \mathbb{E} [m_n(\mathcal{W}_N)] = \frac{1}{N} \frac{1}{k^n}\sum_{G \in \Delta(n,b,r)} k(k-1)\ldots(k-b+1) N(N-1)\ldots(N-r) c_p^G \mathbb{E}\mathbf{x}_G,$$
where the summation is over all canonical $\Delta(n,b,r)$-graphs. The summation is into three parts according to the type of the graph $\Delta_a(n,b,r)$ for $a=1,2$ or $3$.
Firstly, if $G$ is a $\Delta_2(n,b,r)$-graph then $\mathbb{E}\mathbf{x}_G=0$ as the $x_i$ are centered and there exists at least one non-paired multiedge. That is,
$$  S_2 = \frac{1}{N^{1+np/2}}\sum_{G \in \Delta_2(n,b,r)} k(k-1)\ldots(k-b+1) N(N-1)\ldots(N-r) c_p^G \mathbb{E}\mathbf{x}_G =0.$$
Secondly, if $G$ is a $\Delta_3(n,b,r)$-graph then $b+r< np/2$ by Lemma \ref{lem:delta3} and as $\mathbb{E}\mathbf{x}_G \leq c'(n)$ then
$$  S_3 = \frac{1}{N^{1+np/2}}\sum_{G \in \Delta_3(n,b,r)} k(k-1)\ldots(k-b+1) N(N-1)\ldots(N-r)c^G_p \mathbb{E}\mathbf{x}_G = \mathcal{O}(N^{-1}).$$
Finally, if $G$ is a $\Delta_3(n,b)$-graph then $\mathbb{E}\mathbf{x}_G=(\mathbb{E}[(x_1)_1^2])^{np/2}=1/\eta_p^{np/2}$. So by Lemma \ref{lem:delta1},
\begin{align*}
    S_1 &= \frac{1}{N^{1+np/2}}\sum_{G \in \Delta_1(n,b)} k(k-1)\ldots(k-b+1) N(N-1)\ldots(N-r) \eta_p^{np/2} \mathbb{E}\mathbf{x}_G \\
    &= \frac{1}{N^{1+np/2}}\sum_{b=1}^n F_{p/2}^b(n) k^{b} N^{1+\frac{np}{2}-b} + \mathcal{O}(N^{-1}) \\
    &= \sum_{b=0}^{n-1} F_{p/2}^b(n) t_N^b + \mathcal{O}(N^{-1}),
\end{align*}  
where $t_N = k_N / N^{p/2} \rightarrow t$. This concludes the proof.
\end{proof}

\subsubsection{Proof of Proposition \ref{prop:var_mp}}
We only need to show that $\mathrm{Var} [m_n(\mathcal{W}_N)]$ is summable for all fixed $n$.
\begin{proof}[Proof of Proposition \ref{prop:var_mp}]
We now have
$$ \mathrm{Var} [m_n(\mathcal{W}_N)] = \frac{1}{N^2} \frac{1}{N^{np}} \sum_{G_1, G_2 \in \Delta} [\mathbb{E}\mathbf{x}_{G_1} \mathbf{x}_{G_2} - \mathbb{E}\mathbf{x}_{G_2}\mathbb{E}\mathbf{x}_{G_1}] . $$
Firstly, if $G_1$ has no coincident edge with $G_2$, then by independence
$$ \mathbb{E}\mathbf{x}_{G_1} \mathbf{x}_{G_2} - \mathbb{E}\mathbf{x}_{G_2}\mathbb{E}\mathbf{x}_{G_1} =0. $$
Moreover, if $G_1 \cup G_2$ has an overall single edge, then we have also
$$ \mathbb{E}\mathbf{x}_{G_1} \mathbf{x}_{G_2} = \mathbb{E}\mathbf{x}_{G_2}\mathbb{E}\mathbf{x}_{G_1} =0. $$
Hence, we can assume that $G_1$ has a coincident edge with $G_2$ and contains $G =G_1 \cup G_2$ has no single edge. If the graph of noncoincident edges has a cycle, then the noncoincident edges of $G$ are not more than $np$. In the other case, if the graph of noncoincident edges has no cycle, then the number of noncoincident vertices is also not larger than $np$ due to the following reason. At least one edge must have coincidence greater than $3$, because otherwise we find an edge of coincidence $1$ in both $G_1$ and $G_2$, and then a second one as the number of edges is even, so that gives a cycle in $G$. Thus, either this edge has coincidence greater than $4$, either it is equal to $3$ and there is another one of coincidence greater than $3$ since again the number of edges is even. That gives that the number of noncoincident vertices is not larger than $(np-1)+1=np$. Consequently, we get
$$ \mathrm{Var} [m_n(\mathcal{W}_N)] \leq \frac{1}{N^2} 2 c'(n) K_n, $$
$K_n$ depending only on $n$. The proof is complete.
\end{proof}

%%%%%%%%%%%%%%%%%%%%%%%%%%%%%%%%%%%%%%%%%%%%%%%%%%%%%%%%

\section{Tensorial free convolution}\label{sec:conv}

The frame of this part is largely inspired by the lecture notes of Roland Speicher \cite{speicher2019lecturenotesfreeprobability}. For this section we fix $p \geq 2$ even integer.

\subsection{Free convolution of compactly supported measures}

Let $\mu$ be a probability measure on $\dR$ with compact support, that is there exists $M>0$ such that $\mu[-M,M]=1$. Then the moments $(m_n(\mu))_{n\in \dN}$ of $\mu$,
\begin{itemize}
    \item[(i)] are all finite,
    \item[(ii)] are exponentially bounded with constant $M$, i.e. for all $n \in \dN$,
    $$ \vert m_n(\mu) \vert \leq M^n,$$
    \item[(iii)] determine uniquely the probability measure $\mu$.
\end{itemize}

\begin{proposition}\label{prop:exp_bound}
    Let $a$ be a distribution on the trace maps with moments $(m_n(a))_{n\in \dN}$ and free cumulants  $(\kappa_n(a))_{n\in \dN}$. Then the following statements are equivalent :
    \begin{itemize}
        \item[(i)] the sequence $(m_n(a))_{n\in \dN}$ is exponentially bounded,
        \item[(ii)] the sequence $(\kappa_n(a))_{n\in \dN}$ is exponentially bounded.
    \end{itemize}
\end{proposition}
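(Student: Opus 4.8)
The plan is to exploit the functional relation between the moment and cumulant generating series to transfer exponential bounds back and forth, using the standard equivalence between a power series having a positive radius of convergence and its coefficients being exponentially bounded. Recall that a sequence $(c_n)$ satisfies $|c_n| \leq R^n$ for all $n$ (possibly after adjusting $R$ by a constant) if and only if the formal power series $\sum_n c_n z^n$ converges in a neighborhood of $0$; more precisely, $\limsup_n |c_n|^{1/n} < \infty$. So the statement to prove is equivalent to: $M_a(z)$ has positive radius of convergence if and only if $C_a(z)$ does. For this I would invoke Theorem \ref{thm:MC}, which gives $M_a(z) = C_a(z M_a(z)^{p/2})$ (the theorem is stated for a tensor $T$, but the proof only uses the moment-cumulant relation \eqref{relation_mc}, which holds verbatim for any distribution $a$ on $p$-regular trace maps since $m_0(a) = \kappa_0(a) = 1$).

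For the direction (ii) $\Rightarrow$ (i): assume $C_a(z)$ converges on a disk of radius $\rho > 0$. I would set up a fixed-point / majorant-series argument for the implicit equation $M(z) = C_a(z M(z)^{p/2})$. Since $m_0(a) = \kappa_0(a) = 1$, we have $M(0) = 1$, and one checks by the implicit function theorem (applied to the analytic function $(z,w) \mapsto w - C_a(z w^{p/2})$ near $(0,1)$, whose $w$-derivative at $(0,1)$ equals $1 \neq 0$) that the equation has a unique analytic solution $M(z)$ near $z = 0$ with $M(0) = 1$. That analytic solution must agree with the formal power series $M_a(z)$ by uniqueness of the coefficients obtained from \eqref{relation_mc}. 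Hence $M_a(z)$ has positive radius of convergence, which gives (i). Alternatively, and perhaps cleaner to write, one can run Lemma \ref{lem:rel_mc} directly as a recursive bound: if $|\kappa_s(a)| \leq A^s$ for all $s$, define $B$ large and prove by strong induction on $n$ that $|m_n(a)| \leq B^n$, choosing $B$ so that the combinatorial sum $\sum_{s=1}^n A^s \sum_{i_1+\cdots+i_{sp/2}=n-s} \prod_j B^{i_j}$ is dominated by $B^n$; the number of compositions of $n-s$ into $sp/2$ nonnegative parts is at most $\binom{n-s+sp/2}{sp/2} \leq 2^{n + sp/2}$, so the whole sum is bounded by a geometric-type series in $A$, $B$ and a fixed constant, and one picks $B$ large relative to $A$ and $2^{1+p/2}$.

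For the direction (i) $\Rightarrow$ (ii): assume $M_a(z)$ has positive radius of convergence. Since $M_a(0) = 1 \neq 0$, the series $z M_a(z)^{p/2}$ is analytic near $0$, vanishes at $0$, and has nonzero derivative there (equal to $1$), so it is locally invertible; let $\psi(w)$ be its analytic inverse near $0$. Then Theorem \ref{thm:MC} gives $C_a(w) = M_a(\psi(w))$ on a neighborhood of $0$, a composition of two functions analytic near $0$, hence $C_a$ is analytic near $0$ and its coefficients $\kappa_n(a)$ are exponentially bounded, giving (ii). As with the other direction, one can instead extract an explicit recursion for $\kappa_n(a)$ in terms of $m_1(a),\ldots,m_n(a)$ by inverting \eqref{relation_mc} (solve for $\kappa_n(a)$, which appears with coefficient $1$ since all $m_0 = 1$), and prove the exponential bound by induction.

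The main obstacle is purely bookkeeping: controlling the combinatorial multiplicities in \eqref{relation_mc} — namely the number of ways to write $n - s = i_1 + \cdots + i_{sp/2}$ — uniformly enough that the induction closes, i.e. choosing the exponential constant $B$ (resp. the constant for $\kappa$) correctly so the inductive step goes through for all $n$ simultaneously. If one prefers the complex-analytic route via Theorem \ref{thm:MC} and the implicit/inverse function theorem, there is essentially no obstacle at all beyond checking the nonvanishing of the relevant derivative at the base point, which is immediate from $m_0(a) = \kappa_0(a) = 1$; I would present that version as the main argument and remark that an elementary inductive proof is also available.
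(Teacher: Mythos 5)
Your primary argument --- passing to generating functions and applying the implicit/inverse function theorem to the functional equation $M_a(z) = C_a\bigl(z M_a(z)^{p/2}\bigr)$ from Theorem~\ref{thm:MC}, after correctly noting that this relation is a purely formal consequence of Lemma~\ref{lem:rel_mc} and therefore applies to any distribution $a$ on $p$-regular trace maps, not just tensors --- is sound. It is, however, a genuinely different route from the paper's. The paper never invokes Theorem~\ref{thm:MC}: it bounds $m_n(a)$ directly by the poset sum $\sum_{\BB'\leq\BB}\kappa_{\BB'}$ and controls the number of terms by the Fuss--Catalan number $F_p(n)\leq (2^p)^n$, and in the other direction it additionally controls the M\"obius function by $F_p(n)$. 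Your complex-analytic route avoids all of this counting at the cost of having to justify that the formal identity of Theorem~\ref{thm:MC} upgrades to an identity of germs of analytic functions near $0$ (which it does, since $zM_a(z)^{p/2}$ has no constant term), while the paper's argument stays entirely inside the poset combinatorics.

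Your secondary ``elementary inductive'' alternative, on the other hand, does not close as sketched, and this is more than bookkeeping. In the inductive step for $(ii)\Rightarrow(i)$ you bound
$$|m_n| \leq \sum_{s=1}^{n} A^s\,\bigl(\#\{\text{compositions of } n-s \text{ into } sp/2 \text{ parts}\}\bigr)\,B^{n-s},$$
and already for $p=2$ the composition count is $\binom{n-1}{s-1}$, so the right-hand side equals $A(A+B)^{n-1}$, which exceeds $B^n$ for all $n$ large no matter how large $B$ is chosen. So the claim that one can ``pick $B$ large'' so that $|m_n|\leq B^n$ holds by strong induction is false. The repair is a majorant-series argument: $|m_n| \leq \tilde m_n$ where $\tilde m_n$ satisfies the same recursion with $\kappa_s$ replaced by $A^s$, and $\tilde M(z)=\sum\tilde m_n z^n$ solves the algebraic equation $\tilde M - Az\tilde M^{p/2+1}=1$, a Fuss--Catalan-type equation with exponentially bounded coefficients --- but at that point you are back to the analytic argument. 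I would keep the IFT version as your proof and simply drop the remark that a naive single-constant induction also works.
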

\begin{proof}
$\boxed{(ii) \Rightarrow (i)}$ Assume that $\kappa_n(a) \leq M^n$ for all $n$. Then for $n \geq 0$,
        $$ \vert m_n(a) \vert \leq \sum_{b=1}^n \sum_{\pi_n = (\pi_{i_1},\ldots,\pi_{i_b}) \in \mathrm{NC}_p^b(n)} \underbrace{\sum_{\BB_1,\ldots,\BB_b \in \mathcal{B}_{i_1} \times \ldots \times \mathcal{B}_{i_b}} \vert \kappa_{\BB_1}(a) \ldots \kappa_{\BB_1}(a) \vert}_{\leq M^n} \leq F_p(n) M^n \leq (2^p M)^n. $$
        
$\boxed{(i) \Rightarrow (ii)}$ For $n \geq 0$, we have that
$$ \kappa_{\BB}(a) = \sum_{\BB' \leq \BB} \mathrm{Moeb}(\BB', \BB) \BB'(a),$$
where $\mathrm{Moeb}(\BB',\BB)$ is the Moebius function associated to our poset. It is defined as follows : for any $\BB$, $\mathrm{Moeb}(\BB,\BB)=1$ and for $\BB'<\BB$, $\mathrm{Moeb}(\BB',\BB)=-\sum_{\BB'\leq \BB''<\BB} \mathrm{Moeb}(\BB',\BB'')$. Note that if $\BB \in \mathcal{B}_n$ and $\BB'\leq \BB$, then $\vert \mathrm{Moeb}(\BB',\BB) \vert \leq F_p(n)$. 
Now assume that $m_n(a) \leq M^n$ for all $n$. Then for $n \geq 0$,
$$ \vert \kappa_n(a) \vert \leq F_p(n) \sum_{b=1}^n \sum_{\pi_n = (\pi_{i_1},\ldots,\pi_{i_b}) \in \mathrm{NC}_p^b(n)} \underbrace{\sum_{\BB_1,\ldots,\BB_b \in \mathcal{B}_{i_1} \times \ldots \times \mathcal{B}_{i_b}} \vert \kappa_{\BB_1}(a) \ldots \kappa_{\BB_1}(a) \vert}_{\leq M^n} \leq (4^p M)^n. $$
\end{proof}

\begin{proposition}
    Let $\mu$ and $\nu$ be two compactly supported probability measures such that
    \begin{itemize}
        \item[$\bullet$] there exists two distributions $a$ and $b$ such that $\mu_a=\mu$ and $\mu_b=\nu$,
        \item[$\bullet$] $a$ and $b$ are free.
    \end{itemize}
    The moments of $a + b$ are exponentially bounded and hence determine uniquely a compactly supported probability measure, denoted $\mu \oplus_p \nu$. 
\end{proposition}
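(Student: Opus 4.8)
The plan is to push everything through Proposition~\ref{prop:exp_bound} together with the vanishing of mixed free cumulants for free families, which is available here because $p$ is even (so the families are even in the sense of \cite{bonnin2024freenesstensors}). First I would observe that compact support of $\mu$ and $\nu$ gives, by property (ii) recalled above, that $(m_n(a))_n=(m_n(\mu))_n$ and $(m_n(b))_n=(m_n(\nu))_n$ are exponentially bounded; the implication $(i)\Rightarrow(ii)$ of Proposition~\ref{prop:exp_bound} then produces constants $M_a,M_b$ with $|\kappa_n(a)|\le M_a^n$ and $|\kappa_n(b)|\le M_b^n$ for all $n$.

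The second step is the additivity of free cumulants under the free convolution: $\kappa_{\BB}(a+b)=\kappa_{\BB}(a)+\kappa_{\BB}(b)$ for every $p$-regular connected trace map $\BB$. This follows by expanding $\kappa_{\BB}(a+b)$ by multilinearity over the $2^{\#\BB}$ ways of decorating the vertices of $\BB$ by $a$ or $b$ and discarding the mixed terms, which vanish because $a$ and $b$ are free (using that $p$ even makes the families even). Summing over $\BB\in\cB_n$ gives $\kappa_n(a+b)=\kappa_n(a)+\kappa_n(b)$, whence $|\kappa_n(a+b)|\le M_a^n+M_b^n\le\big(2\max(M_a,M_b)\big)^n$, so the free cumulants of $a+b$ are exponentially bounded. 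The implication $(ii)\Rightarrow(i)$ of Proposition~\ref{prop:exp_bound} then returns an exponential bound $|m_n(a+b)|\le M^n$ on the moments of $a+b$.

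It remains to pass from this moment sequence to a measure, and here the exponential bound does the ``uniqueness and compact support'' half for free: Carleman's condition is satisfied, so the Hamburger moment problem for $(m_n(a+b))_n$ is determinate, and any representing measure is forced into $[-M,M]$. The genuinely non-formal point is \emph{positivity} of $(m_n(a+b))_n$, i.e.\ that a representing measure exists at all. I would obtain this by a concrete realization of the free convolution: either through a tensorial free-product construction, so that $a+b$ is literally the distribution of $A+B$ for freely independent real symmetric tensors $A,B$ realizing $a$ and $b$, and then invoke Gurau's existence theorem \cite{gurau2020generalizationwignersemicirclelaw}; or by exhibiting $a+b$ as a limit of moments of an explicit asymptotically free family of random symmetric tensors (in the spirit of \cite{bonnin2024universalitywignerguraulimitrandom}), where positivity of each finite-dimensional moment passes to the limit. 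This realization step—bridging the combinatorial object $a+b$ with an honest probability measure—is the main obstacle; once it is in place, setting $\mu\oplus_p\nu:=\mu_{a+b}$ is justified and the rest is the routine bookkeeping above.
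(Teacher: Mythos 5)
Your first two steps coincide with the paper's own proof, just spelled out in more detail: compact support of $\mu,\nu$ gives exponentially bounded moments of $a$ and $b$; the implication $(i)\Rightarrow(ii)$ of Proposition~\ref{prop:exp_bound} yields exponentially bounded free cumulants; freeness and the vanishing of mixed cumulants (available because $p$ is even) give $\kappa_n(a+b)=\kappa_n(a)+\kappa_n(b)$; and $(ii)\Rightarrow(i)$ then bounds the moments of $a+b$. Up to this point you and the paper are doing the same thing.

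You are right, however, to single out positivity as the real obstacle, and here you are in fact \emph{more} careful than the paper, whose proof stops at ``the moments of $a+b$ are exponentially bounded. This gives the result.'' An exponential bound gives determinacy of the moment problem and forces any representing measure into a compact interval, but it does not by itself produce a representing measure. The paper is aware of this: earlier it concedes it has ``not yet succeeded to give a direct argument of the existence of this measure by showing the positivity of these moments,'' and in the remark immediately following this very proposition it notes that ``not all distributions will correspond to a real tensor (eventually limiting) distribution.'' Neither of your two proposed remedies is developed in the paper --- there is no tensorial free-product construction established in this framework, and the asymptotic-realization route is not carried out --- so the existence of $\mu\oplus_p\nu$ is, as you say, the main remaining obstacle. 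In short: your write-up faithfully reproduces what the paper actually proves and correctly locates the gap, which the paper's own one-line conclusion does not close either.
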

\begin{proof}
By free independence of $a$ and $b$, we know that for all $n \geq 1$,
$$ \kappa_n(a + b) = \kappa_n(a) + \kappa_n(b).$$
Hence, thanks to Proposition \ref{prop:exp_bound}, we have that the moments of $a + b$ are exponentially bounded. This gives the result.
%$$ C_{T_1+T_2}(z) = C_{T_1}(z) + C_{T_2}(z) -1,$$
%and allows to compute the moments of $T_1 +T_2$ in terms of the individual ones of $T_1$ and $T_2$. Therefor the moments of $\mu \oplus_p \nu$ depends only on $\mu$ and $\nu$.
\end{proof}
\begin{remark}
    It does not depend on the choice of $a$ and $b$ because they are free so the free cumulants, summing over all maps with $n$ vertices, are whatever additive. However, not all distributions will correspond to a real tensor (eventually limiting) distribution.
\end{remark}
\begin{definition}
     The probability measure $\mu \oplus_p \nu$ is called the {\em tensorial free convolution} of $\mu$ and $\nu$. 
\end{definition}
When $p=2$ we retrieve the usual free convolution.
Now, let $\mu,\nu,\pi$ be compactly supported probability measures. Then the operation $\oplus_p$ has the following properties :
\begin{enumerate}
    \item[(N)] Neutral element : $\delta_0 \oplus_p \mu=\mu$,
    \item[(C)] Commutativity : $\mu \oplus_p \nu=\nu \oplus_p \mu$,
    \item[(A)] Associativity : $\mu \oplus_p (\nu \oplus_p \pi)=(\mu \oplus_p \nu) \oplus_p \pi$.
\end{enumerate}
Here $\delta_0$ is the measure having all zero moments and free cumulants for $n \geq 1$, associated to the distribution on $p$-regular maps equal to zero on all the non-empty maps associated. More generally, for $p$ even, the distribution $t . 1_{p}$ is the distribution having for free cumulants
$$ \kappa_{\BB}(t . 1_{p}) = \frac{t}{(p-1)(p-3)\ldots 1} \mbox{ if } \BB \mbox{ is a bouquet map and } 0 \mbox{ otherwise.}$$
It is the identity appearing in \cite{bonnin2024freenesstensors}, but symmetrized, and multiplied by a factor $t$. It has an associated probability measure on $\dR$, denoted $\Delta_t$ having for free cumulants
$$ \kappa_n(\Delta_t)=t \mbox{ if } n=1 \mbox{ and } 0 \mbox{ if } n \geq 2,$$
and for moments 
$$ m_n(\Delta_t) = F_{p/2}(n) t^n.$$
This follows from Theorem \ref{thm:MC}, knowing that the functional relation satisfied by the $p$-Fuss-Catalan generative function is $f(z)=1+zf(z)^p$. This means in particular that the identity $1_4$ of order $4$ has for moments the Catalan numbers. Importantly, these distributions $t . 1_{p}$ are {\em free from anything} as we proved in [\cite{bonnin2024freenesstensors}, Lemma 7].

\subsection{Basic examples for the semicircular and free Poisson}
Firstly, we note that the analytic moment-cumulant formula given by Theorem \ref{thm:MC} allows to retrieve the sequence of free cumulants from the sequence of moments and reciprocally with less effort than the combinatorial moment-cumulant formula on the poset. That is what gives the two following lemmas. 
\begin{lemma}
    For $n \geq 1$, the free cumulants of the semicircular law of order $p$ are given by 
    $$ \kappa_n = \mathbf{1}_{n=2}. $$
\end{lemma}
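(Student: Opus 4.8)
The plan is to run the analytic moment--cumulant formula of Theorem~\ref{thm:MC} backwards. Write $M := M_{\mu_p}$ and $C := C_{\mu_p}$ for the two generating series, so that $M(z) = C\!\left(zM(z)^{p/2}\right)$ as an identity in $\dC\INT{z}$. Since $M(0) = m_0(\mu_p) = 1$, the series $\phi(z) := zM(z)^{p/2}$ is a well-defined element of $\dC\INT{z}$ (the condition $M(0)=1$ is what makes sense of the power $p/2$ when $p$ is odd), and it satisfies $\phi(0)=0$, $\phi'(0)=1$, hence admits a formal compositional inverse $\psi$. Composing the moment--cumulant relation with $\psi$ gives $C(w) = M(\psi(w))$, so $C$ is \emph{uniquely} determined by $M$. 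It therefore suffices to exhibit one series $C$ with $C(0)=1$ satisfying $M(z) = C\!\left(zM(z)^{p/2}\right)$, and I claim that $C(z) = 1 + z^2$ works.

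First I would record the moment series. The high order semicircular law $\mu_p$ has moments $m_n(\mu_p) = F_p(n/2)\,\mathbf{1}_{n\text{ even}}$, so
\[ M(z) \;=\; \sum_{k\geq 0} F_p(k)\, z^{2k} \;=\; B_p(z^2), \qquad B_p(w) := \sum_{k\geq 0} F_p(k)\, w^k, \]
where $B_p$ is the generating series of the order-$p$ Fuss--Catalan numbers. I would then invoke the classical generating-function identity $B_p(w) = 1 + w\,B_p(w)^p$ --- this is exactly the relation $f(z) = 1 + zf(z)^p$ for the Fuss--Catalan series already recalled in Section~\ref{sec:conv}. Substituting $w = z^2$ yields
\[ M(z) \;=\; B_p(z^2) \;=\; 1 + z^2\, B_p(z^2)^p \;=\; 1 + \left(zM(z)^{p/2}\right)^2, \]
which is precisely $M(z) = C\!\left(zM(z)^{p/2}\right)$ with $C(z) = 1 + z^2$. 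By the uniqueness noted above, $C_{\mu_p}(z) = 1 + z^2$, so $\kappa_0(\mu_p)=1$, $\kappa_2(\mu_p)=1$, and $\kappa_n(\mu_p)=0$ for every other $n$; in particular $\kappa_n = \mathbf{1}_{n=2}$ for $n \geq 1$, as claimed.

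The computation is short, and the only step deserving a word of care --- the ``hard part'', though it is quite mild --- is the claim that Theorem~\ref{thm:MC} genuinely lets one recover $C$ from $M$ inside $\dC\INT{z}$. This is just the formal inverse function theorem applied to $\phi(z) = zM(z)^{p/2}$, together with the observation that $M$ being even in $z$ forces $\phi$ to be odd and hence $C$ to be even, consistently with the answer $1+z^2$. Everything else reduces to the standard Fuss--Catalan functional equation.
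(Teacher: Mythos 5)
Your proposal is correct and rests on the same two ingredients as the paper, namely Theorem~\ref{thm:MC} and the Fuss--Catalan functional equation $f(z)=1+zf(z)^p$; the only difference is direction, since the paper posits $C(z)=1+z^2$ and recovers the Fuss--Catalan moment series, whereas you start from the moments and invert. The uniqueness step you spell out (that $\phi(z)=zM(z)^{p/2}$ is formally invertible because $\phi(0)=0$, $\phi'(0)=1$, so $C$ is determined by $M$) is left implicit in the paper's ``That gives that $\pi=\mu_p$'', and is a worthwhile clarification.
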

\begin{proof}
    Assume that $\pi$ is such that $C_{\pi}(z)=1+z^2$. Then,
    $$M_{\pi}(z)= 1+ z^2 M_{\pi}(z)^p.$$
    Hence a simple recursive argument gives that the odd moments are zero, so $M_{\pi}(z)=f(z^2)$ and we retrieve the functional equation of the Fuss-Catalan numbers $f(z)=1+zf(z)^p$. That gives that $\pi=\mu_p$.
\end{proof}
%\begin{proof}
%    The easiest to understand he equivalence between this distribution of moments and cumulants is to remark that if $\kappa_{\BB}(T)=\mathbf{1}_{\BB=\mathfrak{f}_p}$ then $\BB(T)=\sum_{\BB'\leq \BB}\kappa_{\BB}(T)=\mathbf{1}_{\{ \BB \text{ is melonic}\}} \prod \mathfrak{f}_p(T)$. It gives the equivalence by uniqueness of the free cumulants associated to a map.
%    The expression of the moments comes from the fact that there are exactly $(p-1)!^{k} F_p(k)$ melonic $p$-regular maps with $2k$ vertices. For the free cumulants, that is because there are $(p-1)!$ rooted melons of order $p$.
%\end{proof}
\begin{lemma}
    For $n\geq 1$, the free cumulants of a free Poisson law of order $p$ and parameter $t$ are given by
    $$ \kappa_n =t $$
    if $p$ is even, and 
    $$ \kappa_n =\mathbf{1}_{n \text{ even }} t$$
    if $p$ is odd.
\end{lemma}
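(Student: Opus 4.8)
The plan is to mirror the proof of the previous lemma, using the analytic moment--cumulant formula of Theorem~\ref{thm:MC} in the direction ``guess the cumulants, verify the moments.'' Concretely, I would start from the known moments of the free Poisson law: for $p$ even these are the Fuss--Narayana polynomials $m_n(\nu_{p,t}) = \sum_{b=1}^n F_{p/2}^b(n) t^b$, and I want to show that the sequence $\kappa_n = t$ (all $n\geq 1$, together with $\kappa_0 = 1$) satisfies the functional relation $M(z) = C(z M(z)^{p/2})$ when $M(z) = \sum_n m_n(\nu_{p,t}) z^n$. Since the relation determines one series from the other, it suffices to check that the candidate cumulant series produces the correct moment series.

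First I would write down $C(z) = 1 + t\sum_{n\geq 1} z^n = 1 + \frac{tz}{1-z}$ (as a formal power series), so that the functional equation becomes
$$ M(z) = 1 + \frac{t\,z M(z)^{p/2}}{1 - z M(z)^{p/2}}. $$
Setting $w = z M(z)^{p/2}$, this reads $M = 1 + \frac{tw}{1-w}$, i.e. $M - 1 = \frac{tw}{1-w}$, which one can reorganize into a polynomial identity relating $M$, $z$ and $t$. The key step is then to recognize that the generating function of the Fuss--Narayana polynomials satisfies exactly this equation; this is a classical fact (it is, for instance, the generating-function identity behind the free Bessel laws $\pi_{p/2,t}$ of \cite{Banica_2011}, and also follows from the Lagrange-inversion form $F_{p/2,k}(t) = \sum_b \frac1b\binom{k-1}{b-1}\binom{(p/2)k}{b-1}t^b$). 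Alternatively, and perhaps cleaner, I would invoke Theorem~\ref{thm:wigner}'s analogue: by Theorem~\ref{thm:Marčenko} and Lemma~\ref{lem:ind}, the Wishart tensor converges in distribution to $b_{p,t}$, whose cumulants $\kappa_{\BB}$ are supported on multicycles with weight $t/[(p/2)!]^n$; summing $\kappa_{\BB}$ over all connected $p$-regular maps on $n$ vertices then counts multicycles of length $n$, of which there are $[(p/2)!]^n$ (``for a given choice of inputs and outputs edges''), giving $\kappa_n = t$ directly from the definition $\kappa_n(a) = \sum_{\BB\in\cB_n}\kappa_{\BB}(a)$.

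For the odd case I would argue identically: the odd moments vanish (as noted after Theorem~\ref{thm:MC}, there is no $p$-regular trace map on an odd number of vertices when $p$ is odd), so $M(z) = g(z^2)$ for some series $g$; writing $C(z) = 1 + t\sum_{n\ \mathrm{even}} z^n = 1 + \frac{tz^2}{1-z^2}$ and plugging into $M(z) = C(zM(z)^{p/2})$ shows that only even powers survive and reduces the identity to the same functional equation for $g$, now with the Fuss--Narayana numbers of parameter $p$; equivalently, one counts odd multicycles of length $2n$, of which there are $[(p+1/2)!(p-1/2)!]^n$ by the definition of $b_{p,t}$, matching $\kappa_{\BB}(b_{p,t}) = t/[(p+1/2)!(p-1/2)!]^n$ to give $\kappa_n = t\,\mathbf{1}_{n\ \mathrm{even}}$.

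The main obstacle I expect is the bookkeeping in identifying the generating-function identity: one must be careful that $C(z)$ is only a formal power series (the geometric series $\frac{tz}{1-z}$ has radius of convergence $1$, but we only need the relation in $\dC\INT{z}$), and that the substitution $w = zM(z)^{p/2}$ is legitimate as a composition of formal power series, which it is since $M(0) = 1$ so $w$ has no constant term. Once that is set up, matching against the Fuss--Narayana generating function is a citation to \cite{Banica_2011} (or a short Lagrange-inversion check), and the combinatorial route via Lemma~\ref{lem:ind} sidesteps the analytic identity altogether at the cost of invoking the convergence theorem. I would present the combinatorial argument as the main proof and remark that it also follows from Theorem~\ref{thm:MC} plus the known moments.
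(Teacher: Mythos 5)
Your second, analytic route is exactly the paper's proof: take $C(z)=1+\frac{tz}{1-z}$, substitute into $M(z)=C\big(zM(z)^{p/2}\big)$, and identify the resulting functional equation as the one satisfied by the Fuss--Narayana generating function (cited to Banica), so that the moments match those defining $\nu_{p,t}$. That part is correct and is what the paper actually does.

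Your primary, combinatorial route is genuinely different, and it has a gap that the analytic route was designed to sidestep. You claim $\kappa_n(b_{p,t}) = \sum_{\BB\in\cB_n}\kappa_\BB(b_{p,t}) = \#\{\text{multicycles of length }n\}\cdot \frac{t}{[(p/2)!]^n} = t$, which requires that the number of multicycles of length $n$ in $\cB_n$ be exactly $[(p/2)!]^n$. But the paper does not establish this count: the sentence you quote literally says ``\emph{for a given choice of inputs and outputs edges}, there are $[(p/2)!]^n$ such maps,'' which explicitly reserves an additional multiplicity coming from which $p/2$ half-edges at each vertex are declared incoming versus outgoing. Unless you show that rooting and unlabeling quotient this extra choice away (nontrivial, and not addressed in the paper or in your proposal), the identity $\kappa_n(b_{p,t})=t$ does not follow from the definition of $b_{p,t}$ by mere counting. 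This is precisely why the paper proves the lemma analytically rather than by enumerating multicycles.

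Separately, the detour through Theorem~\ref{thm:Marčenko} and Lemma~\ref{lem:ind} is unnecessary and slightly backwards: $\nu_{p,t}$ is defined directly as $\mu_{b_{p,t}}$, so its cumulants are a purely combinatorial/analytic question about the distribution $b_{p,t}$; the Wishart convergence is a downstream theorem, not a prerequisite. If you keep the combinatorial presentation, you should drop the appeal to Wishart convergence entirely and instead supply the missing count of multicycles in $\cB_n$; otherwise, lead with the analytic verification you sketched, which is self-contained and matches the paper.
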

\begin{proof}
    We treat the even case, the odd case follows. Assume that $\pi$ has all non trivial free cumulants equal to $t$. Then for $\vert z \vert <1$,
    $$C_{\pi}(z)=1+ \frac{zt}{1-z}.$$
    That gives the following relation for $M_{\pi}(z)$
    $$ M_{\pi}(z) = 1 + zM_{\pi}^{p/2}(z) (M_{\pi}(z) +t-1),$$
    which is the equation satisfied by the generative function of the Fuss-Narayana numbers, see \cite{Banica_2011}. That means that $\pi=\nu_{p,t}$.
\end{proof}
\begin{remark}\label{rk:t_inf}
    When the parameter $t$ goes to infinity, the reshaped free Poisson law $\frac{1}{\sqrt{t}} (\nu_{p,t}- \Delta_t)$ tends to a semicircular.  
    Indeed, since $1_p$ is free from anything as we mentioned just before, 
    $$ \kappa_n(b_{p,t}- t . 1_p)=0 \mbox{ if } n=1 \mbox{ and } t \mbox{ if } n \geq 2. $$
    Then, we have that 
    $$ \kappa_2(\frac{1}{\sqrt{t}}(b_{p,t}- t . 1_p))=1, $$
    and for all $n\geq 3$, when $t \rightarrow \infty$,
    $$ \kappa_n(\frac{1}{\sqrt{t}}(b_{p,t}- t . 1_p)) = t^{1-\frac{n}{2}} \rightarrow 0. $$
    That gives the result.
\end{remark}
We give in the following the basic examples of tensorial free convolution for high order semicircular and free Poisson laws.
\begin{lemma}
    The free convolution of two freely independent semicirculars of order $p$ is a semicircular of order $p$ dilated by a factor $\sqrt{2}$. That is,
    $$ \mu_p \oplus_p \mu_p = \mu_p^{(\sqrt{2})}.$$
\end{lemma}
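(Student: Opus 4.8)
The plan is to combine the additivity of free cumulants under $\oplus_p$ with the analytic moment--cumulant formula of Theorem~\ref{thm:MC}, after first computing the free cumulants of a dilated high order semicircular law.

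\emph{Step 1: dilation.} For $c>0$ let $\mu_p^{(c)}$ denote the pushforward of $\mu_p$ under $x\mapsto cx$, so that $m_n(\mu_p^{(c)})=c^n m_n(\mu_p)$ and the moment generating series of $\mu_p^{(c)}$ is $N(z):=M_{\mu_p}(cz)\in\dC\INT{z}$. Since the preceding Lemma gives $C_{\mu_p}(z)=1+z^2$, Theorem~\ref{thm:MC} yields $M_{\mu_p}(z)=1+z^2M_{\mu_p}(z)^p$, hence $N(z)=1+c^2z^2N(z)^p$. Writing $w=zN(z)^{p/2}$, an invertible substitution of formal power series since $w=z+O(z^2)$, we have $w^2=z^2N(z)^p$ and therefore $N(z)=1+c^2w^2$; by the uniqueness part of Theorem~\ref{thm:MC} this means the cumulant generating series of $\mu_p^{(c)}$ is $1+c^2z^2$, that is $\kappa_2(\mu_p^{(c)})=c^2$ and $\kappa_n(\mu_p^{(c)})=0$ for all $n\neq 0,2$.

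\emph{Step 2: comparison of cumulants and conclusion.} By the Proposition above, $\mu_p\oplus_p\mu_p$ is a well-defined compactly supported probability measure, and by additivity of free cumulants (the Remark following that Proposition, together with the Lemma just before this statement giving $\kappa_n(\mu_p)=\mathbf 1_{n=2}$ for $n\geq 1$) we get $\kappa_n(\mu_p\oplus_p\mu_p)=2\,\mathbf 1_{n=2}$ for $n\geq 1$. Taking $c=\sqrt 2$ in Step~1 produces exactly the same sequence of free cumulants for $\mu_p^{(\sqrt 2)}$. Since $\mu_p^{(\sqrt 2)}$ is compactly supported as well (dilation preserves compact support), and since by Lemma~\ref{lem:rel_mc} the free cumulants determine the moments, the two measures have the same moments and hence coincide, which is the claim.

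The only point requiring a little care is Step~1: one must justify that solving $N(z)=1+c^2w^2$ in the variable $w=zN(z)^{p/2}$ genuinely recovers the cumulant generating function of $\mu_p^{(c)}$, which rests on the invertibility of $z\mapsto zN(z)^{p/2}$ as a formal power series and the uniqueness half of the moment--cumulant correspondence of Theorem~\ref{thm:MC}; both are routine. Everything else is immediate from the additivity of free cumulants under $\oplus_p$ and the fact, established earlier, that $\oplus_p$ preserves compact support.
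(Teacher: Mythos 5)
Your proof is correct and is essentially the paper's argument run in the cumulant direction rather than the moment direction: the paper computes $M_{\mu_p\oplus_p\mu_p}(z)=2C_{\mu_p}(zM^{p/2})-1=1+2z^2M^p$ and observes this is the functional equation for $M_{\mu_p}(\sqrt 2 z)$, while you instead compute $\kappa_n(\mu_p^{(\sqrt 2)})$ via the invertible substitution $w=zN(z)^{p/2}$ and match cumulants. Both hinge on Theorem~\ref{thm:MC} together with additivity of free cumulants, so the content is the same; your version just makes explicit the step (``the dilation has cumulant series $1+c^2z^2$'') that the paper leaves implicit behind ``that gives the result.''
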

\begin{proof}
    One can just write
    $$ m(z):= M_{\mu_p \oplus_p \mu_p}(z) = C_{\mu_p}(sm(z)^{p/2}) + C_{\mu_p}(sm(z)^{p/2}) -1 = 1+ 2z^2 m(z)^p. $$
    That gives the result.
\end{proof}
\begin{lemma}
    The free convolution of two freely independent free Poisson of order $p$ and parameters $t$ and $t'$ is a free Poisson of order $p$ and parameter $t+t'$. That is,
    $$ \nu_{p,t} \oplus_p \nu_{p,t'} = \nu_{p,t+t'}. $$
\end{lemma}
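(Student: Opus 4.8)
The plan is to argue exactly as in the proof of the preceding lemma on semicirculars, the whole content being the additivity of free cumulants. First I would use that, by construction of $\oplus_p$, if $a$ and $b$ are free distributions with $\mu_a = \nu_{p,t}$ and $\mu_b = \nu_{p,t'}$, then $\kappa_n(a+b) = \kappa_n(a) + \kappa_n(b)$ for every $n \geq 1$. By the cumulant characterization of the free Poisson law just established, $\kappa_n(a) = t$ and $\kappa_n(b) = t'$ for all $n \geq 1$ (since $p$ is fixed even in this section), so $a+b$ has all nontrivial free cumulants equal to $t+t'$. Applying that same lemma in the other direction identifies $a+b$ with the distribution $b_{p,t+t'}$, hence $\mu_{a+b} = \nu_{p,t+t'}$.

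To make the identification of measures explicit I would pass to generating functions. Additivity of the $\kappa_n$ gives $C_{\nu_{p,t}\oplus_p \nu_{p,t'}}(z) = C_{\nu_{p,t}}(z) + C_{\nu_{p,t'}}(z) - 1 = 1 + \frac{z(t+t')}{1-z}$ for $|z|$ small. Feeding this into the analytic moment-cumulant relation of Theorem \ref{thm:MC} and clearing denominators, the series $m(z) := M_{\nu_{p,t}\oplus_p \nu_{p,t'}}(z)$ satisfies $m(z) = 1 + z\, m(z)^{p/2}\left(m(z) + (t+t') - 1\right)$, which is precisely the functional equation of the order-$p$ Fuss--Narayana generating function of parameter $t+t'$ appearing in the proof of the previous lemma (cf. \cite{Banica_2011}). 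Since this equation has a unique formal power series solution, $m = M_{\nu_{p,t+t'}}$, and as both measures are compactly supported they are determined by their moments; therefore $\nu_{p,t}\oplus_p \nu_{p,t'} = \nu_{p,t+t'}$.

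I do not expect a genuine obstacle: the statement is a direct consequence of cumulant additivity together with the cumulant characterization of $\nu_{p,t}$, both already available. The only points deserving a line of care are that compactly supported measures are moment-determinate (recalled at the start of this section), so that the functional equation pins down the measure uniquely, and --- if one wishes to state the result for $p$ odd as well --- that the same computation goes through with cumulants equal to $t+t'$ on even indices and $0$ on odd ones, using the odd Fuss--Narayana normalization.
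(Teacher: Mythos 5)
Your proposal is correct and follows essentially the same route as the paper: the paper's proof is exactly your second paragraph, writing $m(z) = C_{\nu_{p,t}}(z\,m(z)^{p/2}) + C_{\nu_{p,t'}}(z\,m(z)^{p/2}) - 1$ and recognizing the resulting functional equation as that of the Fuss--Narayana generating function with parameter $t+t'$. Your first paragraph (working directly at the level of cumulants) is a harmless rephrasing of the same observation.
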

\begin{proof}
    One can just write for $\vert z \vert <1$,
    $$ m(z):= M_{\nu_{p,t} \oplus_p \nu_{p,t'}}(z) = C_{\nu_{p,t}}(sm(z)^{p/2}) + C_{\nu_{p,t'}}(sm(z)^{p/2}) -1 = 1+ \sum_{n \geq 1}(t+t') (zm(z)^{p/2})^n. $$
    Again, that gives the result.
\end{proof}

\subsection{$R$-transform}
\begin{definition}[$R$-transform]
    For $\mu$ a probability measure, its $R$-transform is the formal power series
    $$ R_{\mu}(z)= \frac{C_{\mu}(z)-1}{z} = \sum_{n=1}^{\infty} \kappa_n(\mu) z^{n-1}. $$
\end{definition}

If $\mu$ is compactly supported, $R_{\mu}(z)$ converges for $\vert z \vert$ sufficiently small and we then have for any $\mu, \nu$ compactly supported probability measures
$$ R_{\mu \oplus_p \nu}(z) = R_{\mu}(z) + R_{\nu}(z),$$
for $\vert z \vert$ sufficiently small.
\par We now define other version of transforms that could be more useful in the tensorial setting. We first recall some already known properties about the Cauchy transform of a measure, which we will not prove gain, see \cite{speicher2019lecturenotesfreeprobability, cima_cauchy}.
\begin{proposition}
    Let $g_{\mu}(z):=\int_{\dR} \frac{1}{z-t}d \mu(t)$, for all $z \in \dC^+$, be the Cauchy transform of a probability measure $\mu$ on $\dR$. We have the following properties :
    \begin{enumerate}
        \item $g_{\mu} :\dC^+ \mapsto \dC^-$ is analytic on $\dC^+$, and satisfies 
        $$ \underset{y\rightarrow \infty}{\mathrm{lim}} iyg_{\mu}(iy)=1,$$
        \item any probability measure can be recovered from its Cauchy tranform via the inversion formula
        $$ \underset{\epsilon\rightarrow 0}{\mathrm{lim}} \frac{-1}{\pi} \int_x^y \mathrm{Im} [g_{\mu}(t+i\epsilon)] = \mu((x,y)) +\frac{\mu(\{a,b\}}{2}, $$
        \item if $\mu$ is compactly supported on $[-M,M]$ for some $R>0$, then $g_{\mu}$ has a power series expansion as follows
    $$ g_{\mu}(z) := \sum_{n=0}^{\infty} \frac{m_n(\mu)}{z^{n+1}} \mbox{ for all } z \in \dC^+ \mbox{ with } \vert z \vert > r. $$
    \end{enumerate}
\end{proposition}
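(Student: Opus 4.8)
The plan is to establish the three items separately by elementary arguments; the common engine is that, since $\mu$ is a probability measure on $\dR$, for $z \in \dC^+$ the integrand $t \mapsto (z-t)^{-1}$ has modulus at most $1/\mathrm{Im}(z)$, uniformly in $t$, so that every interchange of limit, derivative, or integral below is licit by dominated convergence.

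For item (1) I would differentiate under the integral sign — legitimate since, on any compact $K \subset \dC^+$, both $(z-t)^{-1}$ and its $z$-derivative $-(z-t)^{-2}$ are bounded uniformly in $t \in \dR$ — to conclude that $g_\mu$ is holomorphic on $\dC^+$. Because $\mathrm{Im}\,(z-t)^{-1} = -\mathrm{Im}(z)/|z-t|^2 < 0$ whenever $\mathrm{Im}(z) > 0$, integrating gives $g_\mu(\dC^+) \subset \dC^-$. For the normalization at infinity, write $iy\, g_\mu(iy) = \int_\dR \big(1 - t/(iy)\big)^{-1}\, d\mu(t)$; the integrand tends pointwise to $1$ as $y \to \infty$ and is dominated by $y/\sqrt{t^2+y^2} \le 1$, so $\lim_{y\to\infty} iy\, g_\mu(iy) = \mu(\dR) = 1$.

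For item (2) the key computation is $-\tfrac1\pi \mathrm{Im}\, g_\mu(t+i\epsilon) = \int_\dR P_\epsilon(t-s)\, d\mu(s)$, where $P_\epsilon(u) = \tfrac1\pi\,\tfrac{\epsilon}{u^2+\epsilon^2}$ is the Poisson kernel of the upper half-plane. Integrating over $t \in (x,y)$ and applying Fubini (justified by $P_\epsilon \le 1/(\pi\epsilon)$ and finiteness of $\mu$), everything reduces to $\int_x^y P_\epsilon(t-s)\, dt = \tfrac1\pi\big(\arctan\tfrac{y-s}{\epsilon} - \arctan\tfrac{x-s}{\epsilon}\big)$, a quantity bounded by $1$ that converges pointwise, as $\epsilon \to 0^+$, to $\mathbf{1}_{(x,y)}(s) + \tfrac12 \mathbf{1}_{\{x,y\}}(s)$; a last dominated-convergence step then yields $\mu((x,y)) + \tfrac12\mu(\{x,y\})$. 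This is the step demanding the most care: one must keep track of the atoms of $\mu$ at the two endpoints and be precise about the Fubini interchange, whereas (1) and (3) are essentially immediate.

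For item (3), observe that for $|z| > M$ and $t \in [-M,M]$ one has $|t/z| \le M/|z| < 1$, hence $(z-t)^{-1} = \sum_{n \ge 0} t^n z^{-(n+1)}$ converges uniformly for $t \in [-M,M]$; integrating term by term against $\mu$ gives $g_\mu(z) = \sum_{n \ge 0} m_n(\mu)\, z^{-(n+1)}$, absolutely convergent whenever $|z| > M$, so that $r$ may be taken equal to $M$. I would also note, for the record, that the statement of (2) should read $\mu((x,y)) + \tfrac12\mu(\{x,y\})$ with a $dt$ in the integrand, and that in (3) the radius $r$ is simply the compact-support bound $M$.
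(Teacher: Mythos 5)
Your proof is correct and complete; in fact the paper does not prove this proposition at all — it explicitly states that these are ``already known properties about the Cauchy transform of a measure, which we will not prove again'' and cites Speicher's lecture notes and Cima–Matheson–Ross, so there is no internal argument to compare against. Your derivation is the standard one: differentiation under the integral sign with the uniform bound $|z-t|^{-1}\le \mathrm{Im}(z)^{-1}$ on compacts for holomorphy, the explicit sign $\mathrm{Im}\,(z-t)^{-1}=-\mathrm{Im}(z)/|z-t|^2<0$ for the range, dominated convergence with dominant $y/\sqrt{y^2+t^2}\le 1$ for the normalization at infinity; the Poisson-kernel identity plus Fubini plus the arctan evaluation and a careful DCT for the Stieltjes inversion, correctly accounting for half-mass at endpoint atoms; and the geometric series with uniform convergence on $[-M,M]$ for $|z|>M$ in item (3). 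You are also right to flag the typos in the statement: the inversion formula should carry a $dt$ and end in $\mu(\{x,y\})$ rather than $\mu(\{a,b\})$, and in item (3) the roles of $R$, $M$ and $r$ should coincide (one may take $r=M$). Nothing to add.
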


\begin{remark}
    For general tensors there is an associated notion of resolvent $g(T)$ for tensors, proposed by Gurau, and defined as an integral over $\dR^N$, see \cite{gurau2020generalizationwignersemicirclelaw,bonnin2024universalitywignerguraulimitrandom}. It has the formal expansion given by $g(T)=\sum_{n=0}^{\infty} \frac{m_n(\mu)}{z^{n+1}}$.
\end{remark}

For ease of notataions we forgot the measure $\mu$ as index of all the moments, free cumulants and all the concernd formal series. The last point of the previous Proposition shows that $g(z)$ is a version of the moment series $M(z)=\sum_{n \geq 0} m_n z^n$, namely
$$ g(z) = \frac{1}{z} M\left( \frac{1}{z} \right).$$
Theorem \ref{thm:MC} implies that
$$ M\left(\frac{1}{z}\right) = C\left(\frac{1}{z}M\left(\frac{1}{z}\right)^{p/2}\right),$$
hence denoting $G(z):=\frac{1}{z}M\left(\frac{1}{z}\right)^{p/2}$, we have 
$$ C(G(z))^{p/2} = zG(z).$$
Now we define the formal Laurent series $K(z):=\frac{C_T(z)^{p/2}}{z}$ and the previous equation gives
\begin{equation}\label{eq:KG}
    K(G(z))=z,
\end{equation}
hence also
\begin{equation}\label{eq:GK}
    G(K(z))=z.
\end{equation}
Since $K(z)$ has a pole $\frac{1}{z}$ we split this off and write 
$$ K(z) = \frac{1}{z} + Q(z) \quad \mbox{where} \quad Q(z):= \sum_{n=1}^{\infty} \sum_{i_1,\ldots, i_{p/2} \atop i_1+\ldots+i_{p/2}=n} \kappa_{i_1}\ldots \kappa_{i_{p/2}} z^{n-1}.$$

\begin{definition}[$Q$-transform]
    For $\mu$ a probability measure and $p$ even integer, we define its tensorial $Q$-transform as the formal power series
    $$ Q_{\mu}(z)= \frac{C_{\mu}(z)^{p/2}-1}{z}. $$
\end{definition}

\par When $\mu$ is no more compactly supported, one should study analytic properties of $G$ to deduce the ones of $Q$. We know that $G(z)=z^{\frac{p}{2}-1}g(z)^{p/2}$ and the analytic properties are well known (injective and has an inverse sufficiently far from $0$). We may then define the subordination functions and try to study the additive free convolution of two general tensors. This will be part of further works.

\bibliographystyle{abbrv}
\bibliography{bib}

\end{document}